\numberwithin{equation}{section}
\newtheorem{theorem}{Theorem}[section]
\newtheorem{defi}[theorem]{Definition}
\newtheorem{remark}[theorem]{Remark}
\newtheorem{lemma}[theorem]{Lemma}
\def\Zn{{\mathbb Z}^n}
\def\Rn{{\mathbb R}^n}
\def\R2n{{\mathbb R}^{2n}}
\def\Rn{{\mathbb R}^n}
\def\R2{{\mathbb R}^2}
\def\R2n{{\mathbb R}^{2n}}
\def\N0{{\mathbb N}_{0}}
\def\l2h{{\ell^2(\hbar\mathbb Z^n)}}
\begin{document}

\title[Discrete wave equation for Schr\"{o}dinger operator ]
{Discrete time-dependent wave equation  for the Schr\"{o}dinger operator with unbounded potential}

\author[A. Dasgupta]{Aparajita Dasgupta}
\address{
	Aparajita Dasgupta:
	\endgraf
	Department of Mathematics
	\endgraf
	Indian Institute of Technology, Delhi, Hauz Khas
	\endgraf
	New Delhi-110016 
	\endgraf
	India
	\endgraf
	{\it E-mail address} {\rm adasgupta@maths.iitd.ac.in}
}
\author[S. S. Mondal]{Shyam Swarup Mondal}
\address{
	Shyam Swarup Mondal:
	\endgraf
	Department of Mathematics
	\endgraf
	Indian Institute of Technology, Delhi, Hauz Khas
	\endgraf
	New Delhi-110016 
	\endgraf
	India
	\endgraf
	{\it E-mail address} {\rm mondalshyam055@gmail.com}
}
\author[M. Ruzhansky]{Michael Ruzhansky}
\address{
	Michael Ruzhansky:
	\endgraf
	Department of Mathematics: Analysis, Logic and Discrete Mathematics
	\endgraf
	Ghent University, Belgium
	\endgraf
	and
	\endgraf
	School of Mathematical Sciences
	\endgraf
	Queen Mary University of London
	\endgraf
	United Kingdom
	\endgraf
	{\it E-mail address} {\rm ruzhansky@gmail.com}
}
\author[A. Tushir]{Abhilash Tushir}
\address{
	Abhilash Tushir:
	\endgraf
	Department of Mathematics
	\endgraf
	Indian Institute of Technology, Delhi, Hauz Khas
	\endgraf
	New Delhi-110016 
	\endgraf
	India
	\endgraf
	{\it E-mail address} {\rm abhilash2296@gmail.com}
}

\thanks{ The first and second authors were supported by Core Research Grant, RP03890G,  Science and Engineering
	Research Board (SERB), DST,  India. The third
 author was supported by the EPSRC Grants 
EP/R003025 and EP/V005529/1, by the FWO Odysseus 1 grant G.0H94.18N: Analysis and Partial Differential Equations and by the  Methusalem programme of the Ghent University Special Research Fund (BOF) (Grantnumber
01M01021). The last author is supported by the institute assistantship from Indian Institute of Technology Delhi, India. }
\date{\today}

\subjclass{Primary 46F05; Secondary 58J40, 22E30}
\keywords{Schr\"{o}dinger; lattice; well-posedness.}

\begin{abstract}
In this article, we investigate the semiclassical version of the wave equation for the discrete Schr\"{o}dinger operator, $\mathcal{H}_{\hbar,V}:=-\hbar^{-2}\mathcal{L}_{\hbar}+V$ on the lattice $\hbar\mathbb{Z}^{n},$  where $\mathcal{L}_{\hbar}$ is the discrete Laplacian, and $V$ is a non-negative multiplication operator. We prove that $\mathcal{H}_{\hbar,V}$ has a purely discrete spectrum when the potential $V$    satisfies the condition $|V(k)|\to \infty$ as $|k|\to\infty$. We also show that the Cauchy problem with regular coefficients is well-posed in the associated Sobolev type spaces and very weakly well-posed for distributional coefficients. Finally, we  recover the  classical solution as well as the very weak solution  in  certain
  Sobolev type spaces as the limit of the semiclassical parameter $\hbar\to 0$. 
\end{abstract}
\maketitle
\tableofcontents
\section{Introduction}\label{intro}
The aim of this paper is to study the semiclassical version of the wave equation with a discrete Schr\"{o}dinger operator on  the  lattice
\begin{equation*}
\hbar \mathbb{Z}^{n}=\left\{x \in \mathbb{R}^{n}: x=\hbar k, k \in \mathbb{Z}^{n}\right\},
\end{equation*}
depending on a (small) discretisation parameter $\hbar>0$, and the behaviour of its solutions as $\hbar \rightarrow 0$. Let  $v_j$ be the  $j^{t h}$  basis vector in  $\mathbb{Z}^{n}$, having all zeros except for  $1$  as the  $j^{t h}$  component. The discrete Schr\"{o}dinger operator on  $\hbar \mathbb{Z}^{n}$ denoted by $\mathcal{H}_{\hbar,V}$ is defined by
 \begin{equation}\label{dhamil}
 	\mathcal{H}_{\hbar,V}u(k):=\left(-\hbar^{-2}\mathcal{L}_{\hbar}+V\right)u(k),\quad k\in\hbar\mathbb{Z}^{n},
 \end{equation}
where the discrete Laplacian   $\mathcal{L}_{\hbar}$  is  given by
 \begin{equation}
 	\mathcal{L}_{\hbar} u(k):=\sum\limits_{j=1}^{n}\left(u\left(k+\hbar v_{j}\right)+u\left(k-\hbar v_{j}\right)\right)-2 n u(k),\quad k\in\hbar\mathbb{Z}^{n},
 \end{equation}
and $V$ is a non-negative multiplication operator by $V(k)$.  The semiclassical analogue of the classical wave equation with Schr\"{o}dinger operator is given by
\begin{equation}\label{dispde1}
	\left\{\begin{array}{l}
		\partial_{t}^{2} u(t, k)+a(t) \mathcal{H}_{\hbar,V}u(t, k)+q(t) u(t, k)=f(t,k),\quad(t, k) \in(0, T] \times \hbar\mathbb{Z}^{n}, \\
		u(0, k)=u_{0}(k),\quad k \in \hbar\mathbb{Z}^{n}, \\
		\partial_{t} u(0, k)=u_{1}(k),\quad k \in \hbar\mathbb{Z}^{n},
	\end{array}\right.
\end{equation} 
for the time-dependent propagation speed $a=a(t)\geq 0$, a bounded real-valued function $q$ and the  source term $f$. The equation \eqref{dispde1} is the semiclassical discretisation of the classical wave equation  with the Schr\"{o}dinger operator given by the following Cauchy problem
\begin{equation}\label{orgpde}
	\left\{\begin{array}{l}
		\partial_{t}^{2} u(t, x)+a(t) \mathcal{H}_{V}u(t, x)+q(t) u(t, x)=f(t,x),\quad(t, x) \in(0, T] \times \mathbb{R}^{n}, \\
		u(0, x)=u_{0}(x),\quad x \in \mathbb{R}^{n}, \\
		\partial_{t} u(0, x)=u_{1}(x),\quad x \in \mathbb{R}^{n},
	\end{array}\right.
\end{equation}
where $\mathcal{H}_{V}$ is the usual Schr\"{o}dinger operator on $\mathbb{R}^{n}$ with potential $V$ defined as
\begin{equation}\label{eucd}
	\mathcal{H}_{V}u(x):=\left(-\mathcal{L}+V\right)u(x),\quad x\in\mathbb{R}^{n},
\end{equation}
where $\mathcal{L}$ is the usual Laplacian on $\mathbb{R}^{n}$.

This family of linear operators for different potentials $V$ characterizes different quantum systems.  Let us look at some  critical quantum systems and their corresponding spectra.
  The Schr\"{o}dinger operator with free motion, (i.e.,  there is no force exerted on the electrons) is given by the Laplacian which has continuous spectrum contained in the positive real-axis.
The Schr\"{o}dinger operator of a hydrogen atom with an infinitely heavy nucleus placed at the origin   is given by the Coulomb potential, i.e.,
\begin{equation}
	\mathcal{H}=-\mathcal{L}+\frac{1}{|x|^{2}}.
\end{equation}
It has the essential spectrum contained in the positive half real-axis, and the spectrum on the negative real-axis contains isolated eigenvalues of finite multiplicity. The Schr\"{o}dinger operator with potential $V(x)=|x|^{2}$ and $V(x,y)=x^{2}y^{2}$, i.e., quantum harmonic oscillator and anharmonic oscillator, respectively, has a purely discrete spectrum. For more details about Schr\"{o}dinger operators, one can refer to    \cite{Gusta,Hislop,fern}. Moreover, we  have the following characterization of the potential for the purely discrete spectrum: 
\begin{theorem}\cite{Gusta}.\label{diseuc}
		Let $V :\mathbb{R}^{n}\to \mathbb{R}$ be continuous and satisfies $V(x)\geq0$, and $|V(x)|\to \infty$ as $|x|\to \infty$.
	Then $\sigma_{\mathrm{ess}}(\mathcal{H}_{V})=\emptyset$.
\end{theorem}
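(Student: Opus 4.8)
The plan is to show that $\mathcal{H}_V$ has compact resolvent, since an operator whose resolvent is compact has purely discrete spectrum, and this is precisely equivalent to $\sigma_{\mathrm{ess}}(\mathcal{H}_V)=\emptyset$. First I would realise $\mathcal{H}_V$ as a self-adjoint operator through the nonnegative quadratic form
\begin{equation*}
\mathfrak{h}[u]=\int_{\Rn}|\nabla u|^{2}\,dx+\int_{\Rn}V(x)|u(x)|^{2}\,dx,
\end{equation*}
defined on the form domain $Q=\{u\in H^{1}(\Rn):\int_{\Rn}V|u|^{2}\,dx<\infty\}$ equipped with the form norm $\|u\|_{Q}^{2}=\|u\|_{L^{2}}^{2}+\mathfrak{h}[u]$. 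Because $V\geq 0$, this form is nonnegative and closed, so the associated (Friedrichs-type) self-adjoint realisation of $-\L+V$ is bounded below by $0$; in particular $\sigma(\mathcal{H}_V)\subset[0,\infty)$ and $(\mathcal{H}_V+1)^{-1}$ is a well-defined bounded self-adjoint operator on $L^{2}(\Rn)$.

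The heart of the argument is to prove that the embedding $Q\hookrightarrow L^{2}(\Rn)$ is compact. I would take a sequence $(u_m)$ bounded in $Q$, say $\|u_m\|_{Q}\leq C$, and split the analysis into a bounded region and its complement. Given $\varepsilon>0$, the hypothesis $|V(x)|\to\infty$ as $|x|\to\infty$ furnishes an $R>0$ with $V(x)\geq 1/\varepsilon$ for $|x|\geq R$, which controls the tails uniformly:
\begin{equation*}
\int_{|x|\geq R}|u_m|^{2}\,dx\leq \varepsilon\int_{|x|\geq R}V|u_m|^{2}\,dx\leq \varepsilon C^{2}.
\end{equation*}
On the ball $B_R=\{|x|<R\}$ the sequence is bounded in $H^{1}(B_R)$, so the Rellich--Kondrachov theorem yields a subsequence converging in $L^{2}(B_R)$. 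Patching the tail estimate together with a diagonal extraction over an exhausting family of balls $B_{R_j}$ then produces a subsequence that is Cauchy in $L^{2}(\Rn)$, which establishes the claimed compactness.

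Finally I would upgrade the compact embedding to compactness of the resolvent. The operator $(\mathcal{H}_V+1)^{-1/2}$ maps $L^{2}(\Rn)$ isometrically into $Q$, since for $v=(\mathcal{H}_V+1)^{-1/2}u$ one has $\|v\|_{Q}^{2}=\langle(\mathcal{H}_V+1)v,v\rangle=\|u\|_{L^{2}}^{2}$; hence the composition $L^{2}\xrightarrow{(\mathcal{H}_V+1)^{-1/2}}Q\hookrightarrow L^{2}$ is compact, being a bounded map followed by a compact embedding. Taking the square gives that $(\mathcal{H}_V+1)^{-1}$ is compact, and a compact self-adjoint operator has purely discrete spectrum accumulating only at $0$; translating back, $\mathcal{H}_V$ has a sequence of eigenvalues of finite multiplicity with no finite accumulation point, so $\sigma_{\mathrm{ess}}(\mathcal{H}_V)=\emptyset$. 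The main obstacle is the compact embedding step itself, namely fusing the local Rellich--Kondrachov compactness with the coercivity of $V$ at infinity into a single convergent subsequence via the diagonal argument; once compactness of the resolvent is in hand, the spectral conclusion is a routine consequence of the spectral theorem.
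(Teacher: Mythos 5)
Your argument is correct, and it is the standard proof of this classical fact: realise $\mathcal{H}_V$ through its quadratic form, show the form domain embeds compactly into $L^{2}(\mathbb{R}^{n})$ by combining the coercivity of $V$ at infinity (uniform tail control) with Rellich--Kondrachov on balls and a diagonal extraction, and then convert the compact embedding into compactness of $(\mathcal{H}_V+1)^{-1}$ via the square root of the resolvent. This is essentially the argument in the reference \cite{Gusta} that the paper cites. Be aware, however, that the paper itself offers no proof of this statement: Theorem \ref{diseuc} is quoted as background, and the only compactness proof the authors actually give is for the discrete counterpart, Theorem \ref{mdisc}, by a genuinely different mechanism. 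There, $(V+I)^{-1}$ is shown compact directly as a norm limit of finite-rank truncations (no Sobolev embedding or local compactness is needed on the lattice), and then the second resolvent identity
\begin{equation*}
\left(\mathcal{H}_{\hbar,V}+I\right)^{-1}=\left(\mathcal{H}_{\hbar,V}+I\right)^{-1}\left(-\hbar^{-2}\mathcal{L}_{\hbar}\right)\left(V+I\right)^{-1}+\left(V+I\right)^{-1}
\end{equation*}
transfers that compactness to the resolvent of $\mathcal{H}_{\hbar,V}$, using crucially that the discrete Laplacian is a \emph{bounded} operator on $\ell^{2}(\hbar\mathbb{Z}^{n})$. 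That lattice argument has no continuum analogue, since $-\mathcal{L}$ is unbounded on $L^{2}(\mathbb{R}^{n})$; this is exactly why your form-theoretic compact-embedding route (or something equivalent) is what the Euclidean statement requires, and conversely why the authors could avoid it in their discrete setting. One small point to polish in your write-up: for $v=(\mathcal{H}_V+1)^{-1/2}u$ one only knows $v$ lies in the form domain, not the operator domain, so the identity $\|v\|_{Q}^{2}=\langle(\mathcal{H}_V+1)v,v\rangle$ should be read as $\|v\|_{Q}^{2}=\|(\mathcal{H}_V+1)^{1/2}v\|_{L^{2}}^{2}=\|u\|_{L^{2}}^{2}$ via the spectral theorem; the conclusion is unaffected.
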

 There is limited literature available concerning the study of the spectrum of discrete Schr\"{o}dinger operator on $\hbar\mathbb{Z}^{n}$.  In \cite{Rab2004,Rab2006,Rab2009,Rab2010,Rab2013}, Rabinovich et. al. studied the Schr\"{o}dinger operator on the discrete lattice with slowly oscillating potential, and a few of them are dedicated  to the lattice $\hbar\mathbb{Z}^{n}$.  In \cite{swnkr}, Swain and Krishna addressed the purely discrete spectrum of the Schr\"{o}dinger operator on $\ell^{2}(\mathbb{Z}^{n})$ with potential $|k|^{\alpha}$, where $\alpha\in(0,1)$. 
 
 In the following theorem we will give the characterization of the discrete spectrum for the discrete Schr\"{o}dinger operator.  
\begin{theorem}\label{mdisc}
	Let $\hbar>0$.	Assume that $V\geq 0$ and $\left|V(k)\right|\to \infty$ as $\left|k\right|\to \infty$. Then $\mathcal{H}_{\hbar,V}:=-\hbar^{-2}\mathcal{L}_{\hbar}+V$ has a purely discrete spectrum.
\end{theorem}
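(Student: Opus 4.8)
The plan is to show that $\mathcal{H}_{\hbar,V}$ has compact resolvent on $\ell^2(\hbar\mathbb{Z}^n)$, which is equivalent to its spectrum being purely discrete. The crucial structural observation—in sharp contrast to the continuous setting of Theorem \ref{diseuc}—is that the discrete Laplacian is a \emph{bounded} operator. Writing $B:=-\hbar^{-2}\mathcal{L}_\hbar$ and noting that each shift $u(k)\mapsto u(k\pm\hbar v_j)$ is unitary on $\ell^2(\hbar\mathbb{Z}^n)$, one immediately gets the bound $\|B\|\leq 4n\hbar^{-2}$, and $B$ is self-adjoint. Thus it is the potential $V$, not the Laplacian, that plays the role of the dominant unbounded part, and $\mathcal{H}_{\hbar,V}=V+B$ should be treated as a bounded self-adjoint perturbation of the multiplication operator $V$.

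First I would analyse the multiplication operator $V$ on its natural domain $D(V)=\{u:\sum_k|V(k)u(k)|^2<\infty\}$. Since $V\geq 0$ is real-valued, $V$ is self-adjoint and is diagonal in the standard orthonormal basis $\{\delta_k\}_{k\in\hbar\mathbb{Z}^n}$ with eigenvalues $V(k)$. I would then show that $(V+i)^{-1}$ is compact: it is diagonal with entries $(V(k)+i)^{-1}$, and the hypothesis $|V(k)|\to\infty$ as $|k|\to\infty$ forces $(V(k)+i)^{-1}\to 0$. Equivalently, for each $N$ the sublevel set $\{k:V(k)\leq N\}$ is finite, so $(V+i)^{-1}$ is the operator-norm limit of its finite-rank truncations and is therefore compact. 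In particular $\sigma_{\mathrm{ess}}(V)=\emptyset$.

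Next I would transfer this property to $\mathcal{H}_{\hbar,V}$. Since $B$ is bounded it is $V$-bounded with relative bound $0$, so by Kato--Rellich $\mathcal{H}_{\hbar,V}=V+B$ is self-adjoint on $D(V)$, and $i$ lies in the resolvent set of both operators. Applying the resolvent identity gives
$$(\mathcal{H}_{\hbar,V}+i)^{-1}=(V+i)^{-1}-(V+i)^{-1}\,B\,(\mathcal{H}_{\hbar,V}+i)^{-1}.$$
On the right-hand side the first term is compact by the previous step, while the second term is the product of the compact operator $(V+i)^{-1}$ with the bounded operator $B(\mathcal{H}_{\hbar,V}+i)^{-1}$, hence also compact. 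Therefore $(\mathcal{H}_{\hbar,V}+i)^{-1}$ is compact, i.e. $\mathcal{H}_{\hbar,V}$ has compact resolvent and thus purely discrete spectrum. (Equivalently, $B$ is $V$-compact, so Weyl's theorem on the stability of the essential spectrum under relatively compact perturbations yields $\sigma_{\mathrm{ess}}(\mathcal{H}_{\hbar,V})=\sigma_{\mathrm{ess}}(V)=\emptyset$.)

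I do not anticipate a serious obstacle, precisely because the boundedness of the discrete Laplacian trivialises the perturbation theory that is delicate in the continuous case. The only points requiring genuine care are the self-adjointness and domain bookkeeping for the unbounded multiplication operator $V$, and the direct verification that $(V+i)^{-1}$ is compact from $|V(k)|\to\infty$; everything else follows formally from the resolvent identity (or, packaged differently, from Weyl's theorem).
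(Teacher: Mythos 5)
Your proposal is correct and follows essentially the same route as the paper: both prove that the resolvent of the multiplication operator $V$ is compact (as a diagonal operator whose entries vanish at infinity, hence a norm limit of finite-rank truncations), exploit the boundedness of $-\hbar^{-2}\mathcal{L}_{\hbar}$ on $\ell^{2}(\hbar\mathbb{Z}^{n})$, and transfer compactness to $(\mathcal{H}_{\hbar,V}+\lambda)^{-1}$ via the second resolvent identity. The only differences are cosmetic — the paper works at the spectral point $\lambda=-1$ rather than $\pm i$, and leaves the self-adjointness and the Weyl-theorem repackaging implicit — so your extra care with Kato--Rellich and domains is a welcome but inessential addition.
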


We refer to Section \ref{faho} for the proof of above theorem and for more details about the spectrum of the discrete Schr\"{o}dinger operator. Moreover, we will also review the crucial components of the global Fourier analysis that was  developed  in \cite{niyaz1,Niyaz02} and later on applied for the wave equation associated with the Landau Hamiltonian in \cite{MRniya1,MRniya2}.

 Coming back to our main interest of this article, the Cauchy problems of the form \eqref{orgpde} have been extensively studied by many researchers. For the case of regular coefficients and source term, one can refer to  works  \cite{Colombini-deGiordi-Spagnolo-Pisa-1979,Fer2,Fer3,Fer4}. For distributional irregularities, for example, taking $q$ to be the $\delta$-distribution when electric potential produces shocks, and one can also take $a$ to be a Heaviside function when the propagation speed is discontinuous.  Due to the impossibility of the product of distributions (see Schwartz impossibility result \cite{schimp}),  the formulation of the Cauchy problem \eqref{dispde1} in this case might be impossible in the distributional sense but we are able to develop the well-posedness in very weak sense which was first introduced by Garetto and the third author in \cite{GRweak} and later on implemented in \cite{GR1,GR2,GR3} for different physical models. 
The third author and Tokmagambetov in \cite{GR2,MRniya1,MRniya2} studied the wave equations associated with operators having purely discrete spectrum and also allowing coefficients to have distributional irregularities. More precisely, in the case of regular coefficients, they have proved that the above Cauchy problem is well-posed in   the Sobolev space $\mathrm{H}_{\mathcal{H}_{V}}^{s}$ associated with the Schr\"{o}dinger operator $\mathcal{H}_{V}$, that is,
\begin{equation}
	\mathrm{H}_{\mathcal{H}_{V}}^{s}:=\left\{f \in \mathcal{D}_{\mathcal{H}_{V}}^{\prime}\left(\mathbb{R}^{n}\right): \left(I+\mathcal{H}_{V}\right)^{s / 2} f \in L^{2}\left(\mathbb{R}^{n}\right)\right\},\quad s\in\mathbb{R},
\end{equation}
with the norm $\|f\|_{\mathrm{H}_{\mathcal{H}_{V}}^{s}}:=\|\left(I+\mathcal{H}_{V}\right)^{s / 2} f\|_{L^{2}(\mathbb{R}^{n})}$, where $\mathcal{D}_{\mathcal{H}_{V}}^{\prime}\left(\mathbb{R}^{n}\right)$ is the global space of distributions associated to $\mathcal{H}_{V}$. In particular for non-negative polynomial potentials, the relation between the Sobolev space  associated with Schr\"{o}dinger operator and usual Sobolev spaces can be understood using the inequalities obtained
by Dziubanski and Glowacki in the following theorem:
\begin{theorem}\cite{MR2511755}. Let  $P(x)$ be a nonnegative homogeneous elliptic polynomial on $\mathbb{R}^{n}$ and $V$ is a nonnegative
	polynomial potential.
 For  $1<p<\infty$ and $\alpha>0$ there exist constants $C_1, C_2>0$ such that
 \begin{equation}\label{imp}
\left\|P(D)^\alpha f\right\|_{L^p}+\left\|V^\alpha f\right\|_{L^p} \leq C_1\left\|(P(D)+V)^{\alpha} f\right\|_{L^p},
 \end{equation}
and
\begin{equation*}
\left\|(P(D)+V)^{\alpha} f\right\|_{L^p} \leq C_2\left\|\left(P(D)^\alpha+V^\alpha\right) f\right\|_{L^p},
\end{equation*}
for $f$ in the Schwartz class $\mathcal{S}(\mathbb{R}^{n})$.
\end{theorem}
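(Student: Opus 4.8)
The plan is to recast both inequalities as $L^{p}$-boundedness statements for operators manufactured from the nonnegative self-adjoint operator $\mathcal{A}:=P(D)+V$, and then to exploit its heat semigroup $e^{-t\mathcal{A}}$ together with the functional calculus of $\mathcal{A}$. Putting $g=\mathcal{A}^{\alpha}f$, the first inequality \eqref{imp} is equivalent to the $L^{p}(\mathbb{R}^{n})$-boundedness of the two operators $P(D)^{\alpha}\mathcal{A}^{-\alpha}$ and $V^{\alpha}\mathcal{A}^{-\alpha}$, whereas the second inequality is the complementary statement that $\mathcal{A}^{\alpha}$ is dominated by $P(D)^{\alpha}+V^{\alpha}$. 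The two basic analytic inputs I would prepare are: (i) off-diagonal (Davies--Gaffney/Gaussian) bounds for $e^{-t\mathcal{A}}$ and for $P(D)^{k}e^{-t\mathcal{A}}$ --- for $P(D)=-\mathcal{L}$ these are immediate from Feynman--Kac positivity and the pointwise domination $0\le e^{-t\mathcal{A}}\le e^{t\mathcal{L}}$, while for a general higher-order $P(D)$, whose semigroup need not preserve positivity, they follow from the ellipticity of $P$ and a perturbation argument using $V\ge0$; and (ii) the elementary but crucial fact that a nonnegative polynomial $V$ belongs to the reverse Hölder class $B_{q}$ for every finite $q$, which puts $V$ within the reach of Fefferman--Phong and Shen-type estimates.

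For the first inequality I would begin with the integer model case $\alpha=1$. Since $P(D)f=\mathcal{A}f-Vf$, it is enough to establish the a priori bound $\|Vf\|_{L^{p}}\le C\|\mathcal{A}f\|_{L^{p}}$; this is precisely Shen's estimate for Schrödinger operators with reverse-Hölder potentials, transplanted to the homogeneous elliptic operator $P(D)$, and is proved by means of the auxiliary critical-radius function $m_{V}$ and localized Calderón--Zygmund arguments. To reach arbitrary $\alpha>0$ I would run a Stein complex-interpolation argument on the analytic families $F(z)=P(D)^{z}\mathcal{A}^{-z}$ and $G(z)=V^{z}\mathcal{A}^{-z}$. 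On the line $\operatorname{Re}z=0$ the boundedness reduces to that of the imaginary powers $P(D)^{i\gamma}$, $V^{i\gamma}$ and $\mathcal{A}^{i\gamma}$: the first is a homogeneous Mikhlin multiplier, and the last has sub-exponential growth in $\gamma$ because the Gaussian bounds in (i) yield a Hörmander-type spectral multiplier theorem (equivalently, a bounded $H^{\infty}$ calculus for $\mathcal{A}$). On the line $\operatorname{Re}z=1$ the required bound is exactly the integer estimate just proved. Interpolation then covers $0<\operatorname{Re}z\le1$, and $\alpha>1$ follows by factoring $\mathcal{A}^{\alpha}=\mathcal{A}^{[\alpha]}\mathcal{A}^{\{\alpha\}}$.

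For the second inequality the guiding heuristic is the scalar bound $(a+b)^{\alpha}\le C_{\alpha}(a^{\alpha}+b^{\alpha})$ for $a,b\ge0$, the sole difficulty being that $P(D)$ and $V$ do not commute. When $\alpha=1$ the inequality is trivial, since $P(D)^{1}+V^{1}=\mathcal{A}$. For non-integer $\alpha$ I would make the heuristic rigorous through the subordination formula
\begin{equation*}
\mathcal{A}^{\alpha}f=c_{\alpha}\int_{0}^{\infty}t^{-\alpha-1}\bigl(I-e^{-t\mathcal{A}}\bigr)f\,dt,\qquad 0<\alpha<1,
\end{equation*}
expanding the semigroup difference $e^{-t\mathcal{A}}-e^{-tP(D)}$ by Duhamel's principle so as to isolate the contribution of $V$, and then estimating each resulting piece by $\|P(D)^{\alpha}f\|_{L^{p}}$ or $\|V^{\alpha}f\|_{L^{p}}$ with the kernel bounds of (i); intermediate (Gagliardo--Nirenberg-type) control of the cross terms uses the reverse-Hölder property (ii). The range $\alpha\ge1$ is then obtained by factorisation together with the first inequality.

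The main obstacle throughout is the interaction between the non-commutativity of $P(D)$ and $V$ and the fractional exponents: abstract semigroup theory alone never separates the two contributions, and one genuinely needs the quantitative reverse-Hölder structure of polynomial potentials. Concretely, the two hardest ingredients are the a priori estimate $\|Vf\|_{L^{p}}\lesssim\|\mathcal{A}f\|_{L^{p}}$ (together with the cross-term estimates feeding the second inequality) and the boundedness of the imaginary powers $\mathcal{A}^{i\gamma}$ with admissible growth in $\gamma$; once these are in hand, everything else is assembled by interpolation and subordination. Carrying these out for a general homogeneous elliptic $P(D)$ rather than the Laplacian --- where the semigroup loses positivity and the heat kernel estimates must be earned through off-diagonal methods --- is where the bulk of the work lies.
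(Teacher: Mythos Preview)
The paper does not contain a proof of this theorem: it is quoted verbatim from Dziuba\'nski and G\l owacki \cite{MR2511755} and is used only to justify the Sobolev embedding \eqref{semb}. There is therefore nothing in the present paper to compare your proposal against.

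That said, your outline is broadly in the spirit of how such results are actually established. The reduction to $L^{p}$-boundedness of $P(D)^{\alpha}\mathcal{A}^{-\alpha}$ and $V^{\alpha}\mathcal{A}^{-\alpha}$, the use of the integer case $\alpha=1$ via Shen-type reverse-H\"older estimates, and the passage to fractional $\alpha$ by complex interpolation through imaginary powers are all standard moves in this area. The genuine content, as you correctly identify, lies in (a) the a priori bound $\|Vf\|_{L^{p}}\lesssim\|\mathcal{A}f\|_{L^{p}}$ for higher-order elliptic $P(D)$, and (b) the bounded $H^{\infty}$-calculus (or at least polynomially bounded imaginary powers) for $\mathcal{A}$. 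Your sketch is honest about these being the hard parts rather than pretending they are routine. One caution: the Duhamel expansion you propose for the second inequality, while intuitively correct, tends to generate iterated commutator terms that are delicate to close without already knowing something close to the first inequality; in practice the cited paper handles the second inequality by a more direct argument exploiting that $V$ is a polynomial (not merely reverse-H\"older), which gives finer control on the commutators $[P(D),V]$. If you intend to write this up as an independent proof, that is where you should expect the most work.
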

Now applying the inequality \eqref{imp}  for Schr\"{o}dinger operator $\mathcal{H}_{V}$ with non-negative polynomial potential and using the fact that the Schwartz space $\mathcal{S}(\mathbb{R}^{n})$ is dense in $L^{2}(\mathbb{R}^{n})$, we deduce the following:
\begin{equation}\label{semb}
	\mathrm{H}_{\mathcal{H}_{V}}^{s}(\mathbb{R}^{n})\subseteq \mathrm{H}^{s}(\mathbb{R}^{n}),\quad s>0.
\end{equation}

To simplify the notation, throughout the paper we will be writing $A \lesssim B$ if there exists a constant $C$ independent of the appearing parameters such that $A \leq C B$ and we write that $a\in L_{m}^{\infty}([0,T])$, if $a\in L^{\infty}([0,T])$ is $m$-times differentiable with $\partial^{j}_{t}a\in L^{\infty}([0,T])$, for all $j=1,\dots,m$. 
    
   The well-posedness result for regular coefficients is given by the following theorem: 
\begin{theorem}\label{eucclass}
Let $s\in\mathbb{R}$ and $f \in L^{2}([0,T] ; \mathrm{H}_{\mathcal{H}_{V}}^{s})$. Assume that $a \in L_{1}^{\infty}([0,T])$ and $q\in L^{\infty}([0,T])$  are such that $a(t) \geq a_{0}>0$ for some positive constant $a_{0}$. If the initial Cauchy data $\left(u_{0}, u_{1}\right) \in \mathrm{H}_{\mathcal{H}_{V}}^{1+s} \times \mathrm{H}_{\mathcal{H}_{V}}^{s}$, then the Cauchy problem \eqref{orgpde} has a unique solution $u \in C([0, T] ; \mathrm{H}_{\mathcal{H}_{V}}^{1+s}) \cap C^{1}([0, T] ; \mathrm{H}_{\mathcal{H}_{V}}^{s})$ which satisfies the estimate
	\begin{equation}
		\|u(t, \cdot)\|_{\mathrm{H}_{\mathcal{H}_{V}}^{1+s}}^{2}+\left\|\partial_{t} u(t, \cdot)\right\|_{\mathrm{H}_{\mathcal{H}_{V}}^{s}}^{2} \lesssim \left\|u_{0}\right\|_{\mathrm{H}_{\mathcal{H}_{V}}^{1+s}}^{2}+\left\|u_{1}\right\|_{\mathrm{H}_{\mathcal{H}_{V}}^{s}}^{2}+\|f\|_{L^{2}([0,T];\mathrm{H}_{\mathcal{H}_{V}}^{s})}^{2},
	\end{equation}
	with the constant independent of $t\in[0,T]$.
\end{theorem}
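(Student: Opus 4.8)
The plan is to diagonalise $\mathcal{H}_V$ and reduce \eqref{orgpde} to a decoupled family of second-order ODEs, which is then controlled by a single energy estimate made uniform across the spectrum via the global Fourier analysis associated with $\mathcal{H}_V$. By Theorem \ref{diseuc} the operator $\mathcal{H}_V$ has purely discrete spectrum, so there is an orthonormal basis $\{e_\xi\}$ of $L^2(\mathbb{R}^n)$ with $\mathcal{H}_V e_\xi=\lambda_\xi e_\xi$, $\lambda_\xi\geq 0$ (recall $V\geq 0$). Writing $\widehat{u}(t,\xi)$ for the $\xi$-th coefficient of $u(t,\cdot)$ and similarly $\widehat{f},\widehat{u_0},\widehat{u_1}$, and using the Plancherel-type identity $\|g\|_{\mathrm{H}_{\mathcal{H}_V}^{s}}^2=\sum_\xi(1+\lambda_\xi)^s|\widehat{g}(\xi)|^2$, the Cauchy problem decouples into
\begin{equation*}
\partial_t^2\widehat{u}(t,\xi)+\bigl(a(t)\lambda_\xi+q(t)\bigr)\widehat{u}(t,\xi)=\widehat{f}(t,\xi),\quad \widehat{u}(0,\xi)=\widehat{u_0}(\xi),\quad \partial_t\widehat{u}(0,\xi)=\widehat{u_1}(\xi).
\end{equation*}
Since $a\in L_1^\infty([0,T])$ and $q\in L^\infty([0,T])$, for each fixed $\xi$ this is a linear ODE with bounded coefficients, so standard Carath\'eodory theory yields a unique solution with $\widehat{u}(\cdot,\xi)\in C^1([0,T])$.

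For the estimates I would associate with each $\xi$ the energy
\begin{equation*}
\mathcal{E}_\xi(t):=\left|\partial_t\widehat{u}(t,\xi)\right|^2+\bigl(1+a(t)\lambda_\xi\bigr)\left|\widehat{u}(t,\xi)\right|^2,
\end{equation*}
and differentiate it, substituting the ODE for $\partial_t^2\widehat{u}$. The $a(t)\lambda_\xi$ contributions cancel, leaving
\begin{equation*}
\partial_t\mathcal{E}_\xi(t)=a'(t)\lambda_\xi\left|\widehat{u}\right|^2+2\bigl(1-q(t)\bigr)\mathrm{Re}\bigl(\overline{\widehat{u}}\,\partial_t\widehat{u}\bigr)+2\,\mathrm{Re}\bigl(\partial_t\overline{\widehat{u}}\,\widehat{f}\bigr).
\end{equation*}
The decisive point is that $a(t)\geq a_0>0$ gives $\lambda_\xi|\widehat{u}|^2\leq a_0^{-1}a(t)\lambda_\xi|\widehat{u}|^2\leq a_0^{-1}\mathcal{E}_\xi(t)$, so the first term is bounded by $a_0^{-1}\|a'\|_{L^\infty}\mathcal{E}_\xi(t)$; the mixed term is bounded by $(1+\|q\|_{L^\infty})\mathcal{E}_\xi(t)$ using $|\widehat{u}|^2\leq\mathcal{E}_\xi$, and the source by $\mathcal{E}_\xi+|\widehat{f}|^2$. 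This gives $\partial_t\mathcal{E}_\xi(t)\leq C\mathcal{E}_\xi(t)+|\widehat{f}(t,\xi)|^2$ with $C$ depending only on $a_0,\|a'\|_{L^\infty},\|q\|_{L^\infty}$ and, crucially, \emph{independent of} $\xi$.

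Gronwall's inequality then yields $\mathcal{E}_\xi(t)\leq e^{CT}\bigl(\mathcal{E}_\xi(0)+\int_0^T|\widehat{f}(\tau,\xi)|^2\,d\tau\bigr)$ uniformly in $\xi$. Multiplying by $(1+\lambda_\xi)^s$, summing over $\xi$, and using the two-sided comparison $(1+\lambda_\xi)\lesssim 1+a(t)\lambda_\xi\lesssim(1+\lambda_\xi)$ valid since $a_0\leq a(t)\leq\|a\|_{L^\infty}$, converts $\sum_\xi(1+\lambda_\xi)^s\mathcal{E}_\xi(t)$ into $\|\partial_t u(t,\cdot)\|_{\mathrm{H}_{\mathcal{H}_V}^{s}}^2+\|u(t,\cdot)\|_{\mathrm{H}_{\mathcal{H}_V}^{1+s}}^2$ and the right-hand side into $\|u_0\|_{\mathrm{H}_{\mathcal{H}_V}^{1+s}}^2+\|u_1\|_{\mathrm{H}_{\mathcal{H}_V}^{s}}^2+\|f\|_{L^2([0,T];\mathrm{H}_{\mathcal{H}_V}^{s})}^2$, which is the claimed estimate with constant uniform in $t\in[0,T]$. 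The same bound shows the eigenfunction series for $u$ converges in $C([0,T];\mathrm{H}_{\mathcal{H}_V}^{1+s})\cap C^1([0,T];\mathrm{H}_{\mathcal{H}_V}^{s})$, giving existence, while uniqueness follows from the ODE level or by applying the estimate to the difference of two solutions. The sole genuine obstacle is securing the $\xi$-uniformity of the Gronwall constant: this is precisely where the strict positivity $a(t)\geq a_0>0$ is indispensable, since otherwise the term $a'(t)\lambda_\xi|\widehat{u}|^2$ cannot be absorbed into $\mathcal{E}_\xi$ and the estimate would degrade as $\lambda_\xi\to\infty$; the rest is bookkeeping through the Plancherel identity.
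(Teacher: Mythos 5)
Your proposal is correct and follows essentially the same route as the paper's own argument (given there for the discrete analogue, Theorem \ref{class}, and in the cited works for the continuous case): diagonalisation via the eigenbasis of the operator, reduction to a family of ODEs in the spectral parameter, an energy estimate whose Gronwall constant is uniform in $\xi$ precisely because $a(t)\geq a_0>0$ lets one absorb the term $a'(t)\lambda_\xi|\widehat{u}|^2$ into the energy, and then weighted Plancherel summation. The only cosmetic difference is that you differentiate a scalar second-order energy $|\partial_t\widehat{u}|^2+(1+a(t)\lambda_\xi)|\widehat{u}|^2$ directly, whereas the paper rewrites each mode as a first-order system and uses the symmetriser $S(t)=\left(\begin{smallmatrix} a(t) & 0\\ 0 & 1\end{smallmatrix}\right)$; the two energies are equivalent up to constants, so the arguments coincide.
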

Furthermore, the Cauchy problem \eqref{orgpde} is very weakly well-posed in the case of distributional coefficients.
\begin{theorem}
	Let  $a$ and $q$  be  distributions with supports included in $[0, T]$ such that $a \geq a_0>0$ for some positive constant $a_{0}$, and also let the source term
	$f(\cdot, x)$ be a  distribution with support included in $[0, T]$, for all $x\in\mathbb{R}^{n}$. Let $s \in \mathbb{R}$ and the initial Cauchy data $(u_{0}, u_{1})\in\mathrm{H}_{\mathcal{H}_{V}}^{1+s}\times \mathrm{H}_{\mathcal{H}_{V}}^{s}$. Then the Cauchy problem \eqref{orgpde} has a very weak solution of order $s$.
\end{theorem}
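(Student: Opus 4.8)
The plan is to follow the regularisation strategy that underlies the very weak solutions framework: replace the distributional data by moderate nets of smooth objects, solve the resulting regular Cauchy problems by the already-established Theorem~\ref{eucclass}, and then verify that the net of classical solutions is itself moderate in the relevant Sobolev scale. Accordingly, I would first fix a nonnegative Friedrichs mollifier $\psi\in C_c^\infty(\mathbb{R})$ with $\int\psi=1$ and set $\psi_\varepsilon(t)=\omega(\varepsilon)^{-1}\psi(t/\omega(\varepsilon))$ for a scale $\omega(\varepsilon)\to 0$ to be pinned down at the very end. Define $a_\varepsilon=a*\psi_\varepsilon$, $q_\varepsilon=q*\psi_\varepsilon$ and $f_\varepsilon(\cdot,x)=f(\cdot,x)*\psi_\varepsilon$. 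Since $a$, $q$ and $f(\cdot,x)$ are compactly supported distributions they have finite order, so these nets are moderate: for each $k$ there is $N_k$ with $\|\partial_t^k a_\varepsilon\|_{L^\infty}+\|\partial_t^k q_\varepsilon\|_{L^\infty}\lesssim\omega(\varepsilon)^{-N_k}$ and $\|f_\varepsilon\|_{L^2([0,T];\mathrm{H}_{\mathcal{H}_{V}}^{s})}\lesssim\omega(\varepsilon)^{-N_0}$; moreover, because $\psi\ge 0$, the lower bound $a_\varepsilon\ge a_0>0$ is preserved. For each fixed $\varepsilon$ the regularised coefficients satisfy the hypotheses of Theorem~\ref{eucclass}, so the problem obtained by replacing $a,q,f$ in \eqref{orgpde} with $a_\varepsilon,q_\varepsilon,f_\varepsilon$ has a unique solution $u_\varepsilon\in C([0,T];\mathrm{H}_{\mathcal{H}_{V}}^{1+s})\cap C^1([0,T];\mathrm{H}_{\mathcal{H}_{V}}^{s})$. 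The candidate very weak solution is the net $(u_\varepsilon)_\varepsilon$, and it remains only to establish moderateness.

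The core of the argument is a spectral energy estimate in which the dependence of the constant on $\varepsilon$ is kept explicit. Expanding $u_\varepsilon$ in the eigenfunctions of $\mathcal{H}_V$ (which has purely discrete spectrum $\{\lambda_j\}$ in the present setting), each Fourier coefficient $\widehat{u_\varepsilon}(t,j)$ solves the scalar equation $\partial_t^2\widehat{u_\varepsilon}+(a_\varepsilon(t)\lambda_j+q_\varepsilon(t))\widehat{u_\varepsilon}=\widehat{f_\varepsilon}(t,j)$. I would introduce the mode energy $E_j(t)=|\partial_t\widehat{u_\varepsilon}(t,j)|^2+(1+a_\varepsilon(t)\lambda_j)|\widehat{u_\varepsilon}(t,j)|^2$; differentiating and substituting the ODE, the cross term $2(1+a_\varepsilon\lambda_j)\operatorname{Re}(\widehat{u_\varepsilon}\,\overline{\partial_t\widehat{u_\varepsilon}})$ exactly cancels the $\lambda_j$-coupling produced by $\partial_t^2\widehat{u_\varepsilon}$, leaving only $2(1-q_\varepsilon)\operatorname{Re}(\partial_t\widehat{u_\varepsilon}\,\overline{\widehat{u_\varepsilon}})+2\operatorname{Re}(\partial_t\widehat{u_\varepsilon}\,\overline{\widehat{f_\varepsilon}})+\partial_t a_\varepsilon\,\lambda_j|\widehat{u_\varepsilon}|^2$. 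Estimating the last term by $(\|\partial_t a_\varepsilon\|_{L^\infty}/a_0)\,a_\varepsilon\lambda_j|\widehat{u_\varepsilon}|^2\le(\|\partial_t a_\varepsilon\|_{L^\infty}/a_0)E_j$ gives $E_j'(t)\le C_\varepsilon E_j(t)+|\widehat{f_\varepsilon}(t,j)|^2$, where crucially $C_\varepsilon\lesssim 1+\|q_\varepsilon\|_{L^\infty}+\|\partial_t a_\varepsilon\|_{L^\infty}/a_0$ is \emph{independent of} $j$. Grönwall's inequality together with summation over $j$ against the weight $(1+\lambda_j)^s$ (the $a_\varepsilon\lambda_j$ term reconstructing the $\mathrm{H}_{\mathcal{H}_{V}}^{1+s}$-norm and $|\partial_t\widehat{u_\varepsilon}|^2$ the $\mathrm{H}_{\mathcal{H}_{V}}^{s}$-norm of $\partial_t u_\varepsilon$) yields
\[
\|u_\varepsilon(t,\cdot)\|_{\mathrm{H}_{\mathcal{H}_{V}}^{1+s}}^{2}+\|\partial_t u_\varepsilon(t,\cdot)\|_{\mathrm{H}_{\mathcal{H}_{V}}^{s}}^{2}\lesssim e^{C_\varepsilon T}\Big(\|u_0\|_{\mathrm{H}_{\mathcal{H}_{V}}^{1+s}}^{2}+\|u_1\|_{\mathrm{H}_{\mathcal{H}_{V}}^{s}}^{2}+\|f_\varepsilon\|_{L^2([0,T];\mathrm{H}_{\mathcal{H}_{V}}^{s})}^{2}\Big).
\]

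The main obstacle is the factor $e^{C_\varepsilon T}$. At the natural mollification scale $\omega(\varepsilon)=\varepsilon$ the finite order of $a$ forces $C_\varepsilon\lesssim\varepsilon^{-L}$ for some $L>0$, so $e^{C_\varepsilon T}$ grows faster than any power of $\varepsilon^{-1}$ and the net would fail to be moderate; this is not an artefact of the method but reflects the genuine possibility of exponential energy growth for oscillators with rapidly varying frequency. The remedy is to use the freedom in the choice of regularisation afforded by the \emph{existence} nature of the statement: since $a,q,f$ have finite orders, I would let the scale decay logarithmically slowly, e.g. $\omega(\varepsilon)\sim(\log\varepsilon^{-1})^{-1/L}$, so that $C_\varepsilon=O(\log\varepsilon^{-1})$ and hence $e^{C_\varepsilon T}=O(\varepsilon^{-M})$ for some $M$. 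With this choice the nets $a_\varepsilon,q_\varepsilon,f_\varepsilon$ are still moderate and still converge to $a,q,f$ in $\mathcal{D}'$, while the displayed estimate now shows that $(u_\varepsilon)$ is $\mathrm{H}_{\mathcal{H}_{V}}^{1+s}$-moderate and $(\partial_t u_\varepsilon)$ is $\mathrm{H}_{\mathcal{H}_{V}}^{s}$-moderate.

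In summary, the constructed net $(u_\varepsilon)_\varepsilon$ solves the regularised family associated with \eqref{orgpde} for moderate regularisations of $a,q,f$ and is itself moderate of the required orders, which is precisely what it means to be a very weak solution of order $s$. The step I expect to demand the most care is the explicit, $j$-uniform control of the energy constant $C_\varepsilon$ and the subsequent calibration of the regularisation scale $\omega(\varepsilon)$ against the orders of the distributions, since this is exactly where the passage from the classical estimate of Theorem~\ref{eucclass} to a polynomially moderate bound is won or lost.
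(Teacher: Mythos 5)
Your proposal is correct and takes essentially the same route as the paper: although the paper does not prove this Euclidean statement directly (it is imported from the third author's earlier work), its proof of the discrete analogue, Theorem~\ref{ext}, proceeds exactly as you do — Friedrichs mollification of $a,q,f$ with structure-theorem bounds and preserved positivity, decomposition into eigenmodes of the operator, an energy estimate (your scalar energy $E_j$ is equivalent to the paper's symmetriser energy $(S_\varepsilon U_\varepsilon,U_\varepsilon)$) whose Gronwall constant is uniform in the frequency and explicit in $\omega(\varepsilon)$, and finally a logarithmically slow scale turning $e^{C_\varepsilon T}$ into a power of $\varepsilon^{-1}$. If anything, your calibration $\omega(\varepsilon)\sim(\log\varepsilon^{-1})^{-1/L}$ with $L\geq\max(L_1+1,L_2)$ is more careful than the paper's stated choice $\omega(\varepsilon)\sim|\log\varepsilon|^{-1}$, which taken literally yields the super-polynomial factor $e^{\kappa_T|\log\varepsilon|^{L_1+1}}$ as soon as the order $L_1\geq 1$, so your treatment of the scale-calibration step is a welcome sharpening rather than a deviation.
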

\section{Main results}\label{mrcs}
 First, we  investigate the Cauchy problem \eqref{dispde1} with  regular coefficients  $a\in L_{1}^{\infty}([0,T])$ and $q\in L^{\infty}([0,T])$. We obtain the well-posedness in the discrete Sobolev space $\mathrm{H}_{\mathcal{H}_{\hbar,V}}^{s}$ associated with the discrete Schr\"{o}dinger operator $\mathcal{H}_{\hbar,V}$. Given $s\in\mathbb{R}$, we define the Sobolev space
\begin{equation}\label{sobo}
	 \mathrm{H}_{\mathcal{H}_{\hbar,V}}^{s}:=\left\{u \in \mathrm{H}^{-\infty}_{\mathcal{H}_{\hbar,V}} : \left(I+\mathcal{H}_{\hbar,V}\right)^{s / 2} u \in \ell^{2}\left(\hbar \mathbb{Z}^{n}\right)\right\},
\end{equation}
with the norm $\|f\|_{\mathrm{H}_{\mathcal{H}_{\hbar,V}}^{s}}:=\|\left(I+\mathcal{H}_{\hbar,V}\right)^{s / 2} f\|_{\ell^{2}(\hbar\mathbb{Z}^{n})}$, where $\mathrm{H}^{-\infty}_{\mathcal{H}_{\hbar,V}}$ is the space of $\mathcal{H}_{\hbar,V}$-distributions given in \eqref{hvdistr}. For a detailed study on  the Fourier analysis of discrete Schr\"{o}dinger operator and the associated Sobolev space, we refer to Section \ref{faho}.

The situation of the wave equation with the discrete Schr\"{o}dinger operator is in a striking difference with wave equation involving discrete Laplacian (fractional Laplacian)  that was considered in \cite{op1,op2}, respectively. The difference is in the sense that  we achieve the well-posedness in certain discrete Sobolev type spaces  in this case while  it was well-posed in $\ell^{2}(\hbar\mathbb{Z}^{n})$ in the later cases, and  this difference arises because of the difference in boundedness behavior of the discrete Schr\"{o}dinger operator  and the discrete Laplacian (fractional Laplacian) operator.   In the following theorem, we obtain the classical solution for the Cauchy problem \eqref{dispde1} with regular coefficients:
\begin{theorem}[Classical solution]\label{class}
	Let $T>0$. Let $s\in\mathbb{R}$ and  $f\in L^{2}([0,T];\mathrm{H}^{s}_{\mathcal{H}_{\hbar,V}})$. Assume that $a \in L_{1}^{\infty}\left([0, T]\right)$ satisfies $\inf\limits_{t \in[0, T]}a(t)= a_{0}>0$  and $q \in L^{\infty}\left([0, T]\right)$. If the initial Cauchy data  $\left(u_{0}, u_{1}\right) \in$
	$\mathrm{H}_{\mathcal{H}_{\hbar,V}}^{1+s} \times \mathrm{H}_{\mathcal{H}_{\hbar,V}}^{s}$, then the Cauchy problem \eqref{dispde1} has a unique solution $u \in$ $C([0, T]; \mathrm{H}_{\mathcal{H}_{\hbar,V}}^{1+s}) \cap C^{1}([0, T]; \mathrm{H}_{\mathcal{H}_{\hbar,V}}^{s})$ satisfying the estimate
	\begin{equation}\label{uestt}
		\|u(t,\cdot)\|^{2}_{\mathrm{H}_{\mathcal{H}_{\hbar,V}}^{1+s}}+\|u_{t}(t,\cdot)\|^{2}_{\mathrm{H}_{\mathcal{H}_{\hbar,V}}^{s}}\leq C_{T}\left(\|u_{0}\|^{2}_{\mathrm{H}_{\mathcal{H}_{\hbar,V}}^{1+s}}+\|u_{1}\|^{2}_{\mathrm{H}_{\mathcal{H}_{\hbar,V}}^{s}}+\|f\|^{2}_{L^{2}([0,T];\mathrm{H}_{\mathcal{H}_{\hbar,V}}^{s})}\right)	,
	\end{equation}
for all $t\in[0,T]$, where the constant $C_{T}$ is given by
\begin{equation}
	C_{T}=c_{0}^{-1}(1+\left\|a\right\|_{L^{\infty}})e^{c_{0}^{-1}\left(1+\|a^{\prime}\|_{L^{\infty}}+\|q\|_{L^{\infty}}+2\left\|a\right\|_{L^{\infty}}\right)T},
\end{equation}
with $c_{0}=\min\{a_{0},1\}$.
\end{theorem}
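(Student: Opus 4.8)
The plan is to diagonalise the problem using the $\mathcal{H}_{\hbar,V}$-Fourier analysis recalled in Section \ref{faho}. By Theorem \ref{mdisc} the operator $\mathcal{H}_{\hbar,V}$ has purely discrete spectrum, so there is a complete orthonormal system of eigenfunctions in $\ell^{2}(\hbar\mathbb{Z}^{n})$ with nonnegative eigenvalues; write $\widehat{u}(t,\xi)$ for the coefficients of $u(t,\cdot)$ and $\lambda_{\xi}\geq 0$ for the corresponding eigenvalues. Taking this transform in \eqref{dispde1} decouples the Cauchy problem into the family of second-order ODEs
\[
\partial_{t}^{2}\widehat{u}(t,\xi)+\bigl(a(t)\lambda_{\xi}+q(t)\bigr)\widehat{u}(t,\xi)=\widehat{f}(t,\xi),\qquad \widehat{u}(0,\xi)=\widehat{u}_{0}(\xi),\quad \partial_{t}\widehat{u}(0,\xi)=\widehat{u}_{1}(\xi),
\]
while, by Plancherel, $\|u(t,\cdot)\|_{\mathrm{H}^{s}_{\mathcal{H}_{\hbar,V}}}^{2}=\sum_{\xi}(1+\lambda_{\xi})^{s}|\widehat{u}(t,\xi)|^{2}$. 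The whole statement thus reduces to producing bounds for these ODEs that are \emph{uniform} in $\xi$.

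For each mode I would introduce the energy $E(t,\xi):=|\partial_{t}\widehat{u}(t,\xi)|^{2}+(1+a(t)\lambda_{\xi})|\widehat{u}(t,\xi)|^{2}$ and the weighted total energy $\mathcal{E}(t):=\sum_{\xi}(1+\lambda_{\xi})^{s}E(t,\xi)$. Using $a_{0}\leq a(t)\leq\|a\|_{L^{\infty}}$ together with $1+a_{0}\lambda_{\xi}\geq c_{0}(1+\lambda_{\xi})$ and $1+\|a\|_{L^{\infty}}\lambda_{\xi}\leq(1+\|a\|_{L^{\infty}})(1+\lambda_{\xi})$, where $c_{0}=\min\{a_{0},1\}$, one obtains the two-sided comparison
\[
c_{0}\bigl(\|u(t,\cdot)\|_{\mathrm{H}^{1+s}_{\mathcal{H}_{\hbar,V}}}^{2}+\|\partial_{t}u(t,\cdot)\|_{\mathrm{H}^{s}_{\mathcal{H}_{\hbar,V}}}^{2}\bigr)\leq \mathcal{E}(t)\leq(1+\|a\|_{L^{\infty}})\bigl(\|u(t,\cdot)\|_{\mathrm{H}^{1+s}_{\mathcal{H}_{\hbar,V}}}^{2}+\|\partial_{t}u(t,\cdot)\|_{\mathrm{H}^{s}_{\mathcal{H}_{\hbar,V}}}^{2}\bigr).
\]
This comparison is exactly what will generate the prefactor $c_{0}^{-1}(1+\|a\|_{L^{\infty}})$ in $C_{T}$.

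Next I would differentiate $E(t,\xi)$ in $t$ and substitute the ODE. The crucial point is that the top-order (in $\lambda_{\xi}$) contributions $\mp 2a(t)\lambda_{\xi}\,\mathrm{Re}\bigl(\overline{\widehat{u}}\,\partial_{t}\widehat{u}\bigr)$ coming from the ODE term and from differentiating the coefficient $1+a(t)\lambda_{\xi}$ cancel; this cancellation removes the only $\lambda_{\xi}$-unbounded term and is what makes the resulting differential inequality uniform in $\xi$. The remaining, lower-order terms $a'(t)\lambda_{\xi}|\widehat{u}|^{2}$, $-2q(t)\,\mathrm{Re}(\overline{\partial_{t}\widehat{u}}\,\widehat{u})$, $2\,\mathrm{Re}(\overline{\partial_{t}\widehat{u}}\,\widehat{f})$ and $2\,\mathrm{Re}(\overline{\widehat{u}}\,\partial_{t}\widehat{u})$ are each controlled by $E$, using $a\in L^{\infty}_{1}([0,T])$ to bound $a'$ and $a\geq a_{0}$ to absorb $\lambda_{\xi}|\widehat{u}|^{2}\leq a_{0}^{-1}E\leq c_{0}^{-1}E$. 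This gives $\partial_{t}E(t,\xi)\leq K\,E(t,\xi)+|\widehat{f}(t,\xi)|^{2}$ with $K=c_{0}^{-1}(1+\|a'\|_{L^{\infty}}+\|q\|_{L^{\infty}}+2\|a\|_{L^{\infty}})$ independent of $\xi$. Multiplying by $(1+\lambda_{\xi})^{s}$ and summing yields $\partial_{t}\mathcal{E}(t)\leq K\mathcal{E}(t)+\|f(t,\cdot)\|_{\mathrm{H}^{s}_{\mathcal{H}_{\hbar,V}}}^{2}$, and Gronwall's inequality delivers $\mathcal{E}(t)\leq e^{Kt}\bigl(\mathcal{E}(0)+\|f\|_{L^{2}([0,T];\mathrm{H}^{s}_{\mathcal{H}_{\hbar,V}})}^{2}\bigr)$. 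Combined with the comparison above and $C_{T}=c_{0}^{-1}(1+\|a\|_{L^{\infty}})e^{KT}$, this is precisely \eqref{uestt}.

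Finally, existence follows by solving each decoupled ODE (Carath\'eodory theory suffices, since $q\in L^{\infty}$ and $a\in L^{\infty}_1$ give integrable coefficients) and checking, via the uniform energy bound, that the reconstructed series converges in $C([0,T];\mathrm{H}^{1+s}_{\mathcal{H}_{\hbar,V}})\cap C^{1}([0,T];\mathrm{H}^{s}_{\mathcal{H}_{\hbar,V}})$: the estimate shows the partial sums are Cauchy in these norms. Uniqueness is immediate from the energy estimate applied to the difference of two solutions, for which zero data and zero source force $\mathcal{E}\equiv 0$. The main obstacle throughout is the $\xi$-uniformity: since $\lambda_{\xi}\to\infty$ (the operator is unbounded, in contrast to the discrete Laplacian case where one works directly in $\ell^{2}(\hbar\mathbb{Z}^{n})$), every estimate must be verified to be independent of the mode, and it is exactly the cancellation in $\partial_{t}E$ that secures this.
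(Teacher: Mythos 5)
Your proposal is correct and follows essentially the same route as the paper: $\mathcal{H}_{\hbar,V}$-Fourier decomposition into modes, a mode-wise energy of hyperbolic type whose top-order (in $\lambda_{\xi}$) terms cancel, a Gronwall argument uniform in $\xi$, and Plancherel to return to the Sobolev norms, yielding the same constant $C_{T}$. The only cosmetic difference is that you differentiate the energy of the second-order ODE directly, whereas the paper rewrites each mode as a first-order $2\times 2$ system and encodes the identical cancellation in the symmetriser identity $SA-A^{*}S=0$ (your energy coefficient $1+a(t)\lambda_{\xi}$ versus the paper's $a(t)(1+\lambda_{\xi})$ is immaterial), and you additionally spell out the existence/uniqueness step that the paper leaves implicit.
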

Furthermore, we will allow coefficients to be irregular and as we discussed earlier in Section \ref{intro}, we have a notion of very weak solutions  in order to handle equations that might not have a meaningful solution in the ordinary distributional sense.
 For the convenience of the  readers, we will quickly recap  the important details and state the corresponding results for $a,q\in\mathcal{D}^{\prime}\left([0,T]\right)$.
 Using the Friedrichs-mollifier, i.e., $\psi \in C_{0}^{\infty}\left(\mathbb{R}\right), \psi \geq 0$ and $\int_{\mathbb{R}} \psi=1$, we first regularise the distributional coefficient $a$ to obtain the families of smooth functions $\left(a_{\varepsilon}\right)_{\varepsilon},$ namely
\begin{equation*}\label{aeps}
	a_{\varepsilon}(t):=\left(a*\psi_{\omega(\varepsilon)}\right)\left(t\right),\quad \psi_{\omega(\varepsilon)}(t)=\dfrac{1}{\omega(\varepsilon)}\psi\left(\dfrac{t}{\omega(\varepsilon)}\right),\quad\varepsilon\in(0,1],
\end{equation*}
 where $\omega(\varepsilon)\geq 0$ and $\omega(\varepsilon)\to0$ as $\varepsilon\to 0$. 
\begin{defi}\label{cinfm}
	(i)	A net $\left(a_{\varepsilon}\right)_{\varepsilon} \in L^{\infty}_{m}(\mathbb{R})^{(0,1]}$ is said to be $L^{\infty}_{m}$-moderate if for all  $K \Subset \mathbb{R}$, there exist $N \in \mathbb{N}_{0}$ and $c>0$ such that
	\begin{equation*}	\left\|\partial^{k} a_{\varepsilon}\right\|_{L^{\infty}(K)} \leq c \varepsilon^{-N-k},\quad \text{ for all }k=0,1,\dots,m,
	\end{equation*}
	for all $\varepsilon \in(0,1]$.\\
	(ii) 	A net $\left(a_{\varepsilon}\right)_{\varepsilon} \in L^{\infty}_{m}(\mathbb{R})^{(0,1]}$ is said to be $L^{\infty}_{m}$-negligible if for all $K \Subset \mathbb{R}$ and   $q\in\mathbb{N}_{0}$, there exists  $c>0$ such that
	\begin{equation*}
	\left\|\partial^{k} a_{\varepsilon}\right\|_{L^{\infty}(K)} \leq c \varepsilon^{q},\quad \text{ for all }k=0,1,\dots,m,
	\end{equation*}
	for all $\varepsilon \in(0,1]$.\\
		(iii)	A net $\left(u_{\varepsilon}\right)_{\varepsilon} \in L^{2}([0,T];\mathrm{H}_{\mathcal{H}_{\hbar,V}}^{s})^{(0,1]}$ is said to be $L^{2}([0, T] ; \mathrm{H}_{\mathcal{H}_{\hbar,V}}^{s})$-moderate if   there exist $N \in \mathbb{N}_{0}$ and $c>0$ such that
	\begin{equation*}
		\|u_{\varepsilon}\|_{L^{2}([0,T];\mathrm{H}_{\mathcal{H}_{\hbar,V}}^{s})} \leq c \varepsilon^{-N},
	\end{equation*}
	for all $\varepsilon \in(0,1]$.\\
	(iv) A net $\left(u_{\varepsilon}\right)_{\varepsilon} \in L^{2}([0, T] ; \mathrm{H}_{\mathcal{H}_{\hbar,V}}^{s})^{(0,1]}$ is said to be $L^{2}([0, T] ; \mathrm{H}_{\mathcal{H}_{\hbar,V}}^{s})$-negligible if for all $q\in\mathbb{N}_{0}$ there exists $c>0$  such that
	\begin{equation*}
\|u_{\varepsilon}\|_{L^{2}([0,T];\mathrm{H}_{\mathcal{H}_{\hbar,V}}^{s})} \leq c \varepsilon^{q},
	\end{equation*}
	for all  $\varepsilon \in(0,1]$.
\end{defi}
We observe that the requirements of moderateness are natural in the sense that distributions are moderately regularised. Moreover, by the structure theorems for distributions, we have
	\begin{equation}\label{ccsd}
		\text{compactly supported distributions } \mathcal{E}^{\prime}(\mathbb{R}) \subset\left\{L^{2}\text{-moderate families}\right\}.
	\end{equation}
Therefore, the Cauchy problem \eqref{dispde1} may not have a solution in  compactly supported distributions  $\mathcal{E}^{\prime}(\mathbb{R})$. However, it may exist in the space of $L^{2}$-moderate families in some suitable sense.

Now, the notion of a very weak solution for the Cauchy problem \eqref{dispde1} can be introduced as follows:
\begin{defi}\label{vwkdef}
	Let $s \in \mathbb{R}, f \in L^{2}([0, T] ; \mathrm{H}^{s}_{\mathcal{H}_{\hbar,V}})$, and $(u_{0}, u_{1}) \in \mathrm{H}_{\mathcal{H}_{\hbar,V}}^{1+s}\times \mathrm{H}_{\mathcal{H}_{\hbar,V}}^{s}.$ The net $\left(u_{\varepsilon}\right)_{\varepsilon} \in$ $L^{2}([0, T] ; \mathrm{H}_{\mathcal{H}_{\hbar,V}}^{1+s})^{(0,1]}$ is a very weak solution of order $s$ of the Cauchy problem \eqref{dispde1} if there exist
	\begin{enumerate}
		\item $L^{\infty}_{1}$-moderate regularisation $a_{\varepsilon}$    of the coefficient $a$;
		\item $L^{\infty}$-moderate regularisation  $q_{\varepsilon}$ of the coefficient  $q$; and
		\item $L^{2}([0, T] ; \mathrm{H}_{\mathcal{H}_{\hbar,V}}^{s})$-moderate regularisation $f_{\varepsilon}$ of the source term $f$,
	\end{enumerate}
	such that $\left(u_{\varepsilon}\right)_{\varepsilon}$ solves the regularised problem
	\begin{equation}\label{reg}
		\left\{\begin{array}{l}
			\partial_{t}^{2} u_{\varepsilon}(t, k)+a_{\varepsilon}(t)\mathcal{H}_{\hbar,V}u_{\varepsilon}(t, k)+q_{\varepsilon}(t) u_{\varepsilon}(t, k)=f_{\varepsilon}(t, k),\quad(t, k) \in(0, T] \times \hbar\mathbb{Z}^{n}, \\
			u_{\varepsilon}(0, k)=u_{0}(k),\quad k \in \hbar\mathbb{Z}^{n}, \\
			\partial_{t} u_{\varepsilon}(0, k)=u_{1}(k),\quad k \in \hbar\mathbb{Z}^{n},
		\end{array}\right.
	\end{equation}
	for all $\varepsilon \in(0,1]$, and is $L^{2}([0, T] ; \mathrm{H}_{\mathcal{H}_{\hbar,V}}^{1+s})$-moderate.
\end{defi}
It should be noted that Theorem \ref{class} provides a unique solution to the regularised Cauchy problem \eqref{reg} that satisfies estimate \eqref{uestt}.

A distribution $a$ is said to be positive distribution if $\langle a,\psi\rangle\geq 0$, whenever $\psi \in C_0^{\infty}(\mathbb{R})$ satisfying $\psi\geq 0$, and distribution $a$ is said to be a
 strictly positive distribution if there exists a positive  constant $\alpha$ such that $a-\alpha$ is a positive distribution. In other words, $a\geq\alpha>0$, where $a\geq \alpha$  means that
\begin{equation*}
	\langle a-\alpha, \psi\rangle \geq 0, \quad \text{for all } \psi \in C_0^{\infty}(\mathbb{R}),\psi\geq 0.
\end{equation*}
Now we can state the existence theorem  for the
Cauchy problem \eqref{dispde1} with distributional coefficients as follows:
\begin{theorem}[Existence]\label{ext}
	Let   $a$ and $q$  be   distributions with supports contained in $[0, T]$ such that $a\geq a_{0}>0$ for some positive constant $a_{0}$, and also let  the source term $f(\cdot,k)$ be a distribution with  support contained in $[0,T]$, for all $k\in\hbar\mathbb{Z}^{n}$. Let $s\in\mathbb{R}$ and the initial Cauchy data  $\left(u_{0}, u_{1}\right) \in \mathrm{H}_{\mathcal{H}_{\hbar,V}}^{1+s} \times \mathrm{H}_{\mathcal{H}_{\hbar,V}}^{s}$. Then the Cauchy problem \eqref{dispde1} has a very weak solution of order s.
\end{theorem}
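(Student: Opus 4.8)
The plan is to establish existence of a very weak solution by the standard regularisation scheme: mollify the irregular data, invoke the classical well-posedness of Theorem \ref{class} for each regularised problem, and then verify that the resulting net of solutions satisfies the moderateness demanded in Definition \ref{vwkdef}. First I would fix a Friedrichs mollifier $\psi$ and set $a_{\varepsilon}=a*\psi_{\omega(\varepsilon)}$, $q_{\varepsilon}=q*\psi_{\omega(\varepsilon)}$, and $f_{\varepsilon}(\cdot,k)=f(\cdot,k)*\psi_{\omega(\varepsilon)}$ for each $k\in\hbar\mathbb{Z}^{n}$, with $\omega(\varepsilon)\to 0$. Since $a$, $q$ and each $f(\cdot,k)$ are compactly supported distributions in $t$, the structure theorems underlying \eqref{ccsd} guarantee that $(a_{\varepsilon})_{\varepsilon}$, $(q_{\varepsilon})_{\varepsilon}$ and $(f_{\varepsilon})_{\varepsilon}$ are $L^{\infty}_{1}$-, $L^{\infty}$- and $L^{2}([0,T];\mathrm{H}^{s}_{\mathcal{H}_{\hbar,V}})$-moderate respectively, in the sense of Definition \ref{cinfm}, thereby furnishing items (1)--(3) of Definition \ref{vwkdef}.

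Next I would check that strict positivity survives mollification. Writing $a-a_{0}$ for the positive distribution provided by the hypothesis $a\geq a_{0}>0$, and using $\psi_{\omega(\varepsilon)}\geq 0$, one has $a_{\varepsilon}-a_{0}=(a-a_{0})*\psi_{\omega(\varepsilon)}\geq 0$, so that $a_{\varepsilon}(t)\geq a_{0}>0$ uniformly in $t\in[0,T]$ and $\varepsilon\in(0,1]$. Hence each regularised Cauchy problem \eqref{reg} has smooth coefficients meeting exactly the hypotheses of Theorem \ref{class}, with $\inf_{t}a_{\varepsilon}=a_{0}$ independent of $\varepsilon$. Applying Theorem \ref{class} for each fixed $\varepsilon$ produces a unique $u_{\varepsilon}\in C([0,T];\mathrm{H}^{1+s}_{\mathcal{H}_{\hbar,V}})\cap C^{1}([0,T];\mathrm{H}^{s}_{\mathcal{H}_{\hbar,V}})$ obeying the energy estimate \eqref{uestt} with constant $C_{T}$ of the stated exponential form, giving the candidate net $(u_{\varepsilon})_{\varepsilon}$.

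The remaining and principal task is to show that $(u_{\varepsilon})_{\varepsilon}$ is $L^{2}([0,T];\mathrm{H}^{1+s}_{\mathcal{H}_{\hbar,V}})$-moderate, that is, bounded by $c\varepsilon^{-N}$. Integrating \eqref{uestt} over $[0,T]$ yields $\|u_{\varepsilon}\|^{2}_{L^{2}([0,T];\mathrm{H}^{1+s}_{\mathcal{H}_{\hbar,V}})}\lesssim C_{T}\big(\|u_{0}\|^{2}_{\mathrm{H}^{1+s}_{\mathcal{H}_{\hbar,V}}}+\|u_{1}\|^{2}_{\mathrm{H}^{s}_{\mathcal{H}_{\hbar,V}}}+\|f_{\varepsilon}\|^{2}_{L^{2}([0,T];\mathrm{H}^{s}_{\mathcal{H}_{\hbar,V}})}\big)$. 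The data norms $\|u_{0}\|$, $\|u_{1}\|$ are fixed and $\|f_{\varepsilon}\|$ is moderate, so the whole bound is moderate precisely when $C_{T}$ is moderate. Here is the main obstacle: since $c_{0}=\min\{a_{0},1\}$ is $\varepsilon$-independent, the prefactor $1+\|a_{\varepsilon}\|_{L^{\infty}}$ is harmless, but the exponent $c_{0}^{-1}(1+\|a'_{\varepsilon}\|_{L^{\infty}}+\|q_{\varepsilon}\|_{L^{\infty}}+2\|a_{\varepsilon}\|_{L^{\infty}})T$ must be controlled so that $C_{T}\lesssim\varepsilon^{-N}$; a naive mollification makes the exponent polynomial in $\varepsilon^{-1}$ and hence $C_{T}$ \emph{super}-polynomial. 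The remedy is to exploit the freedom in the scale $\omega(\varepsilon)$: choosing $\omega(\varepsilon)\to 0$ slowly enough (so that the $L^{\infty}$-norms of $a_{\varepsilon}$, $a'_{\varepsilon}$, $q_{\varepsilon}$ grow only logarithmically in $\varepsilon^{-1}$, which remains compatible with, indeed better than, the moderateness of Definition \ref{cinfm}) renders the exponent $O(\log(\varepsilon^{-1}))$, whence $C_{T}\lesssim\varepsilon^{-N}$ and the moderate bound on $(u_{\varepsilon})_{\varepsilon}$ follows. I expect reconciling the exponential energy constant with the polynomial moderateness requirement to be the most delicate point; once the logarithmic calibration is secured, the conclusion that $(u_{\varepsilon})_{\varepsilon}$ is a very weak solution of order $s$ is immediate from the definitions.
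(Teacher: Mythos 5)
Your proposal is correct and follows essentially the same route as the paper's proof: mollification at scale $\omega(\varepsilon)$, the structure theorem for compactly supported distributions giving the moderate bounds $\lvert\partial_t^k a_\varepsilon\rvert\lesssim\omega(\varepsilon)^{-L_1-k}$, $\lvert q_\varepsilon\rvert\lesssim\omega(\varepsilon)^{-L_2}$ together with the preserved lower bound $a_\varepsilon\geq a_0>0$, the energy/Gronwall estimate for the regularised problems (which the paper re-derives on the Fourier side, while you legitimately quote the explicit constant $C_T$ from Theorem \ref{class}), and finally the crucial calibration of $\omega(\varepsilon)$ so that the exponential Gronwall factor becomes polynomial in $\varepsilon^{-1}$. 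If anything, your version of that last step (choosing $\omega(\varepsilon)\to 0$ slowly enough that $\omega(\varepsilon)^{-L_1-1}$ and $\omega(\varepsilon)^{-L_2}$ are $O(\log(1/\varepsilon))$) is stated more carefully than the paper's literal choice $\omega(\varepsilon)\sim\lvert\log\varepsilon\rvert^{-1}$, which turns the exponent into $\lvert\log\varepsilon\rvert^{L_1+1}+\lvert\log\varepsilon\rvert^{L_2}$ and hence gives a moderate bound only when the orders $L_1,L_2$ are small.
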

Furthermore, the uniqueness of the very weak solution can be interpreted as the negligible modification in the approximations of the coefficients $a,q,$ and the source term $f$,  has negligible impact on the family of very weak solutions. Formally the notion of uniqueness can be formulated as the authors did in \cite{marianna_schro12022,marianna_schro22022,marianna_heat2022}.
 \begin{defi}\label{uniquedef}
 	We say that the Cauchy problem \eqref{dispde1} has a $L^{2}([0, T] ; \mathrm{H}_{\mathcal{H}_{\hbar,V}}^{1+s})$-unique very weak solution, if
 	\begin{enumerate}
  \item for all  $L^{\infty}_{1}$-moderate nets $a_{\varepsilon},\tilde{a}_{\varepsilon}$ such that $(a_{\varepsilon}-\tilde{a}_{\varepsilon})_{\varepsilon}$ is $L^{\infty}_{1}$-negligible,
 		\item for all  $L^{\infty}$-moderate nets $q_{\varepsilon},\tilde{q}_{\varepsilon}$ such that $ (q_{\varepsilon}-\tilde{q}_{\varepsilon})_{\varepsilon}$ is $L^{\infty}$-negligible; and
 		\item for all $L^{2}([0, T] ; \mathrm{H}_{\mathcal{H}_{\hbar,V}}^{s})$-moderate nets $f_{\varepsilon},\tilde{f}_{\varepsilon}$ such that $(f_{\varepsilon}-\tilde{f}_{\varepsilon})_{\varepsilon}$ is \\ $L^{2}([0, T] ; \mathrm{H}_{\mathcal{H}_{\hbar,V}}^{s})$-negligible,
 	\end{enumerate}
 the net $(u_{\varepsilon}-\tilde{u}_{\varepsilon})_{\varepsilon}$ is $L^{2}([0, T] ; \mathrm{H}_{\mathcal{H}_{\hbar,V}}^{1+s})$-negligible, where
 	 $(u_{\varepsilon})_{\varepsilon}$ and $(\tilde{u}_{\varepsilon})_{\varepsilon}$ 	 are the families of
 	solutions corresponding to the 	$\varepsilon$-parametrised problems
 		\begin{equation}\label{reg1}
 		\left\{\begin{array}{l}
 			\partial_{t}^{2} u_{\varepsilon}(t, k)+a_{\varepsilon}(t)\mathcal{H}_{\hbar,V}u_{\varepsilon}(t, k)+q_{\varepsilon}(t) u_{\varepsilon}(t, k)=f_{\varepsilon}(t, k),\quad(t, k) \in(0, T] \times \hbar\mathbb{Z}^{n}, \\
 			u_{\varepsilon}(0, k)=u_{0}(k),\quad k \in \hbar\mathbb{Z}^{n}, \\
 			\partial_{t} u_{\varepsilon}(0, k)=u_{1}(k),\quad k \in \hbar\mathbb{Z}^{n},
 		\end{array}\right.
 	\end{equation}
 and
 	\begin{equation}\label{reg2}
 	\left\{\begin{array}{l}
 		\partial_{t}^{2} \tilde{u}_{\varepsilon}(t, k)+\tilde{a}_{\varepsilon}(t)\mathcal{H}_{\hbar,V}\tilde{u}_{\varepsilon}(t, k)+\tilde{q}_{\varepsilon}(t) \tilde{u}_{\varepsilon}(t, k)=\tilde{f}_{\varepsilon}(t, k),\quad(t, k) \in(0, T] \times \hbar\mathbb{Z}^{n}, \\
 		\tilde{u}_{\varepsilon}(0, k)=u_{0}(k),\quad k \in \hbar\mathbb{Z}^{n}, \\
 		\partial_{t} \tilde{u}_{\varepsilon}(0, k)=u_{1}(k),\quad k \in \hbar\mathbb{Z}^{n},
 	\end{array}\right.
 \end{equation}
respectively.
 \end{defi}
\begin{remark}
	The above notion of uniqueness can also be formulated in the sense of the Colombeau algebra $\mathcal{G}(\mathbb{R})$ which is defined in the following quotient form:
	\begin{equation}
		\mathcal{G}(\mathbb{R})=\frac{C^{\infty}\text {-moderate nets }}{C^{\infty}\text {-negligible nets }} .
	\end{equation}
	For more details about the Colombeau algebra, we refer to \cite{ober}. The uniqueness of very weak solutions in the sense of Colombeau algebra can be traced in various settings, see \cite{GRweak,MRniya1,GR2}.
\end{remark}
   The following theorem gives the
uniqueness of the very weak solution to the Cauchy problem \eqref{dispde1} in the  sense of Definition \ref{uniquedef}.
\begin{theorem}[Uniqueness]\label{uniq}
Let   $a$ and $q$  be  distributions with supports contained in $[0, T]$ such that $a\geq a_{0}>0$ for some positive constant $a_{0}$, and also let  the source term $f(\cdot,k)$ be a distribution with  support contained in $[0,T]$, for all $k\in\hbar\mathbb{Z}^{n}$. Let $s\in\mathbb{R}$ and the initial Cauchy data  $\left(u_{0}, u_{1}\right) \in \mathrm{H}_{\mathcal{H}_{\hbar,V}}^{1+s} \times \mathrm{H}_{\mathcal{H}_{\hbar,V}}^{s}$.	 Then the very weak solution of the Cauchy problem \eqref{dispde1} is $L^{2}([0,T];\mathrm{H}_{\mathcal{H}_{\hbar,V}}^{1+s})$-unique.
\end{theorem}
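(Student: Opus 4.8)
The plan is to reduce the uniqueness statement to the a~priori estimate of Theorem \ref{class} applied to the difference of the two solution nets. Let $(u_{\varepsilon})_{\varepsilon}$ and $(\tilde{u}_{\varepsilon})_{\varepsilon}$ solve the regularised problems \eqref{reg1} and \eqref{reg2}, and set $w_{\varepsilon}:=u_{\varepsilon}-\tilde{u}_{\varepsilon}$. Subtracting \eqref{reg2} from \eqref{reg1} and adding and subtracting $a_{\varepsilon}\mathcal{H}_{\hbar,V}\tilde{u}_{\varepsilon}$ and $q_{\varepsilon}\tilde{u}_{\varepsilon}$, one checks that $w_{\varepsilon}$ solves a Cauchy problem of the form \eqref{reg} with the \emph{same} coefficients $a_{\varepsilon},q_{\varepsilon}$, with vanishing initial data $w_{\varepsilon}(0,\cdot)=0$, $\partial_{t}w_{\varepsilon}(0,\cdot)=0$, and with right-hand side
\begin{equation*}
g_{\varepsilon}:=(\tilde{a}_{\varepsilon}-a_{\varepsilon})\,\mathcal{H}_{\hbar,V}\tilde{u}_{\varepsilon}+(\tilde{q}_{\varepsilon}-q_{\varepsilon})\,\tilde{u}_{\varepsilon}+(f_{\varepsilon}-\tilde{f}_{\varepsilon}).
\end{equation*}
This reduces everything to showing that $(g_{\varepsilon})_{\varepsilon}$ is $L^{2}([0,T];\mathrm{H}^{s}_{\mathcal{H}_{\hbar,V}})$-negligible and then transferring this negligibility to $w_{\varepsilon}$ through the energy estimate.

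Next I would apply Theorem \ref{class} to $w_{\varepsilon}$. Since the initial data vanish, the estimate \eqref{uestt} collapses to
\begin{equation*}
\|w_{\varepsilon}(t,\cdot)\|^{2}_{\mathrm{H}^{1+s}_{\mathcal{H}_{\hbar,V}}}+\|\partial_{t}w_{\varepsilon}(t,\cdot)\|^{2}_{\mathrm{H}^{s}_{\mathcal{H}_{\hbar,V}}}\leq C_{T}(\varepsilon)\,\|g_{\varepsilon}\|^{2}_{L^{2}([0,T];\mathrm{H}^{s}_{\mathcal{H}_{\hbar,V}})},
\end{equation*}
where $C_{T}(\varepsilon)$ is the constant of Theorem \ref{class} built from $\|a_{\varepsilon}\|_{L^{\infty}}$, $\|a_{\varepsilon}'\|_{L^{\infty}}$ and $\|q_{\varepsilon}\|_{L^{\infty}}$. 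Because $(a_{\varepsilon})_{\varepsilon}$ is $L^{\infty}_{1}$-moderate and $(q_{\varepsilon})_{\varepsilon}$ is $L^{\infty}$-moderate, I would check that $(C_{T}(\varepsilon))_{\varepsilon}$ is itself moderate, i.e.\ bounded by a negative power of $\varepsilon$; this is the delicate point where the (slow) growth of the regularisations entering the exponential factor of $C_{T}(\varepsilon)$ must be controlled, exactly as in the existence argument.

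It then remains to estimate $g_{\varepsilon}$ term by term, in each case as a product of a negligible coefficient net and a moderate solution net. Since multiplication by $(\tilde{a}_{\varepsilon}-a_{\varepsilon})(t)$ and $(\tilde{q}_{\varepsilon}-q_{\varepsilon})(t)$ acts only in the time variable, I would bound
\begin{equation*}
\|g_{\varepsilon}\|_{L^{2}([0,T];\mathrm{H}^{s}_{\mathcal{H}_{\hbar,V}})}\lesssim \|\tilde{a}_{\varepsilon}-a_{\varepsilon}\|_{L^{\infty}}\,\|\mathcal{H}_{\hbar,V}\tilde{u}_{\varepsilon}\|_{L^{2}([0,T];\mathrm{H}^{s}_{\mathcal{H}_{\hbar,V}})}+\|\tilde{q}_{\varepsilon}-q_{\varepsilon}\|_{L^{\infty}}\,\|\tilde{u}_{\varepsilon}\|_{L^{2}([0,T];\mathrm{H}^{s}_{\mathcal{H}_{\hbar,V}})}+\|f_{\varepsilon}-\tilde{f}_{\varepsilon}\|_{L^{2}([0,T];\mathrm{H}^{s}_{\mathcal{H}_{\hbar,V}})}.
\end{equation*}
By the hypotheses of Definition \ref{uniquedef} the nets $(\tilde{a}_{\varepsilon}-a_{\varepsilon})_{\varepsilon}$, $(\tilde{q}_{\varepsilon}-q_{\varepsilon})_{\varepsilon}$ and $(f_{\varepsilon}-\tilde{f}_{\varepsilon})_{\varepsilon}$ are negligible, while $(\tilde{u}_{\varepsilon})_{\varepsilon}$ is moderate because it is a very weak solution; hence each product is negligible, so $(g_{\varepsilon})_{\varepsilon}$ is $L^{2}([0,T];\mathrm{H}^{s}_{\mathcal{H}_{\hbar,V}})$-negligible. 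Combining this with the moderateness of $(C_{T}(\varepsilon))_{\varepsilon}$ in the energy bound yields that $(w_{\varepsilon})_{\varepsilon}$ is $L^{2}([0,T];\mathrm{H}^{1+s}_{\mathcal{H}_{\hbar,V}})$-negligible, which is precisely the asserted uniqueness.

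I expect the main obstacle to be the Sobolev bookkeeping in the middle term of $g_{\varepsilon}$: the factor $\mathcal{H}_{\hbar,V}\tilde{u}_{\varepsilon}$ a priori lies one $\mathcal{H}_{\hbar,V}$-Sobolev order below $\tilde{u}_{\varepsilon}$, so controlling it in $\mathrm{H}^{s}_{\mathcal{H}_{\hbar,V}}$ with a moderate bound requires either moderate control of $\tilde{u}_{\varepsilon}$ at the higher order $\mathrm{H}^{2+s}_{\mathcal{H}_{\hbar,V}}$, or rewriting $\mathcal{H}_{\hbar,V}\tilde{u}_{\varepsilon}=\tilde{a}_{\varepsilon}^{-1}(\tilde{f}_{\varepsilon}-\partial_{t}^{2}\tilde{u}_{\varepsilon}-\tilde{q}_{\varepsilon}\tilde{u}_{\varepsilon})$ via \eqref{reg2} and exploiting $\tilde{a}_{\varepsilon}\geq a_{0}>0$. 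This regularity reconciliation must be carried out in tandem with the verification that $(C_{T}(\varepsilon))_{\varepsilon}$ stays moderate, so that the decisive ``moderate $\times$ negligible is negligible'' mechanism indeed closes the argument.
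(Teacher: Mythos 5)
Your overall strategy coincides with the paper's: the paper forms the same difference $w_{\varepsilon}=u_{\varepsilon}-\tilde{u}_{\varepsilon}$, the same right-hand side $g_{\varepsilon}=(\tilde{a}_{\varepsilon}-a_{\varepsilon})\mathcal{H}_{\hbar,V}\tilde{u}_{\varepsilon}+(\tilde{q}_{\varepsilon}-q_{\varepsilon})\tilde{u}_{\varepsilon}+(f_{\varepsilon}-\tilde{f}_{\varepsilon})$ with vanishing data, and then transfers negligibility of $g_{\varepsilon}$ to $w_{\varepsilon}$ through an energy estimate. The genuine gap is in your pivotal step: you propose to invoke Theorem \ref{class} as a black box and to ``check that $(C_{T}(\varepsilon))_{\varepsilon}$ is moderate'' from the moderateness of $(a_{\varepsilon})_{\varepsilon}$ and $(q_{\varepsilon})_{\varepsilon}$. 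That check cannot succeed: the constant of Theorem \ref{class} is
\begin{equation*}
C_{T}(\varepsilon)=c_{0}^{-1}\bigl(1+\|a_{\varepsilon}\|_{L^{\infty}}\bigr)e^{c_{0}^{-1}\left(1+\|a_{\varepsilon}^{\prime}\|_{L^{\infty}}+\|q_{\varepsilon}\|_{L^{\infty}}+2\|a_{\varepsilon}\|_{L^{\infty}}\right)T},
\end{equation*}
and $L^{\infty}_{1}$-moderateness only gives $\|a_{\varepsilon}^{\prime}\|_{L^{\infty}}\leq c\,\varepsilon^{-N-1}$, so $C_{T}(\varepsilon)$ may be of size $e^{c\varepsilon^{-N-1}}$, which is \emph{not} bounded by any $\varepsilon^{-M}$. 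Negligibility of $g_{\varepsilon}$ cannot absorb such a factor, since $e^{c\varepsilon^{-N-1}}\varepsilon^{q}\to\infty$ for every fixed $q$; the ``moderate $\times$ negligible'' mechanism breaks down exactly here. The paper does not cite Theorem \ref{class}: it re-runs the Fourier-side symmetriser/Gronwall computation for $W_{\varepsilon}$ (as in the proof of Theorem \ref{ext}) and controls all coefficient norms by the structure theorem for the compactly supported distributions $a,q$, i.e., by \eqref{std}, $|\partial_{t}^{k}a_{\varepsilon}(t)|\leq c_{1}\omega(\varepsilon)^{-L_{1}-k}$, in terms of the \emph{mollification scale} $\omega(\varepsilon)$ rather than $\varepsilon$ itself, and then chooses $\omega(\varepsilon)\sim|\log\varepsilon|^{-1}$ so that the Gronwall exponential collapses to a fixed power $\varepsilon^{-2L_{1}-L_{2}-1}$, which negligibility of $g_{\varepsilon}$ then annihilates. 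So the missing idea is that the moderateness of the energy constant comes from the logarithmic mollification scale (equivalently, from the fact that the nets are mollifier regularisations of $a$ and $q$), not from moderateness of the coefficient nets; your parenthetical about ``(slow) growth'' gestures at this, but as stated your deduction is invalid, and the bare hypotheses of Definition \ref{uniquedef} (arbitrary moderate nets) would not suffice to carry it out.

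On what you call the main obstacle — the Sobolev order of $\mathcal{H}_{\hbar,V}\tilde{u}_{\varepsilon}$ — your concern is legitimate, and here your proposal is actually more careful than the paper, which simply asserts in one line that $g_{\varepsilon}$ is $L^{2}([0,T];\mathrm{H}^{s}_{\mathcal{H}_{\hbar,V}})$-negligible. Since Definition \ref{vwkdef} only makes $(\tilde{u}_{\varepsilon})_{\varepsilon}$ moderate in $L^{2}([0,T];\mathrm{H}^{1+s}_{\mathcal{H}_{\hbar,V}})$ and $\mathcal{H}_{\hbar,V}$ costs two orders in this scale, $\mathcal{H}_{\hbar,V}\tilde{u}_{\varepsilon}$ is a priori only moderate in $\mathrm{H}^{s-1}_{\mathcal{H}_{\hbar,V}}$; running the energy argument at that order yields $w_{\varepsilon}$ negligible in $\mathrm{H}^{s}_{\mathcal{H}_{\hbar,V}}$, one order short of the claim, unless one has moderateness of $\tilde{u}_{\varepsilon}$ one order higher. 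Note also that your fix via the equation, $\mathcal{H}_{\hbar,V}\tilde{u}_{\varepsilon}=\tilde{a}_{\varepsilon}^{-1}(\tilde{f}_{\varepsilon}-\partial_{t}^{2}\tilde{u}_{\varepsilon}-\tilde{q}_{\varepsilon}\tilde{u}_{\varepsilon})$, trades $\mathcal{H}_{\hbar,V}\tilde{u}_{\varepsilon}$ for $\partial_{t}^{2}\tilde{u}_{\varepsilon}$, whose moderateness in $\mathrm{H}^{s}_{\mathcal{H}_{\hbar,V}}$ is likewise not part of Definition \ref{vwkdef}, so it does not by itself close the order gap either.
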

Now we give the consistency result, which means that very weak solutions in Theorem \ref{ext} recapture the
classical solutions given by Theorem \ref{class}, provided the latter exist.
\begin{theorem}[Consistency]\label{cnst}
Let $s\in \mathbb{R}$ and  $f \in L^{2}([0, T], \mathrm{H}_{\mathcal{H}_{\hbar,V}}^{s})$. Assume that $a \in L_{1}^{\infty}\left([0, T]\right)$ satisfies $\inf\limits_{t \in[0, T]}a(t)= a_{0}>0$  and $q \in L^{\infty}\left([0, T]\right)$. If the initial Cauchy data  $\left(u_{0}, u_{1}\right) \in \mathrm{H}_{\mathcal{H}_{\hbar,V}}^{1+s} \times \mathrm{H}_{\mathcal{H}_{\hbar,V}}^{s}$, then for any regularising families $a_{\varepsilon}, q_{\varepsilon}, f_{\varepsilon}$ in Definition \ref{vwkdef}, the very weak solution  $\left(u_{\varepsilon}\right)_{\varepsilon}$  converges to the classical solution of the Cauchy problem \eqref{dispde1} in $L^{2}([0, T]; \mathrm{H}_{\mathcal{H}_{\hbar,V}}^{1+s})$ as $\varepsilon \rightarrow 0$.
\end{theorem}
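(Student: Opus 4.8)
The plan is to compare the classical solution $u$ provided by Theorem \ref{class} with the net $(u_\varepsilon)_\varepsilon$ solving the regularised problem \eqref{reg}, and to control their difference by the energy estimate \eqref{uestt} itself. Since here $a,q,f$ are genuinely regular, I would first record that the Friedrichs regularisations behave well: because $\psi\ge 0$ and $\int_{\mathbb R}\psi=1$, one has $a_\varepsilon\ge a_0$, $\|a_\varepsilon\|_{L^\infty}\le\|a\|_{L^\infty}$, $\|a_\varepsilon'\|_{L^\infty}\le\|a'\|_{L^\infty}$ and $\|q_\varepsilon\|_{L^\infty}\le\|q\|_{L^\infty}$ for every $\varepsilon$, so that the constant $C_T$ in \eqref{uestt} may be chosen \emph{independent of} $\varepsilon$. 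In addition, $a_\varepsilon\to a$ uniformly on $[0,T]$ (as $a$ is Lipschitz), $q_\varepsilon\to q$ in $L^2([0,T])$, and $f_\varepsilon\to f$ in $L^2([0,T];\mathrm H^{s}_{\mathcal H_{\hbar,V}})$, by standard properties of mollification.

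Next I would set $w_\varepsilon:=u-u_\varepsilon$. Subtracting \eqref{reg} from \eqref{dispde1} and rewriting
\[
a\mathcal H_{\hbar,V}u-a_\varepsilon\mathcal H_{\hbar,V}u_\varepsilon
=a_\varepsilon\mathcal H_{\hbar,V}w_\varepsilon+(a-a_\varepsilon)\mathcal H_{\hbar,V}u,
\]
together with the analogous identity for the $q$-terms, shows that $w_\varepsilon$ solves a Cauchy problem of the same type as \eqref{reg}, with \emph{vanishing} initial data $w_\varepsilon(0,\cdot)=\partial_t w_\varepsilon(0,\cdot)=0$ and source
\[
g_\varepsilon:=(f-f_\varepsilon)+(a_\varepsilon-a)\mathcal H_{\hbar,V}u+(q_\varepsilon-q)u.
\]
Applying the a priori estimate \eqref{uestt} to $w_\varepsilon$ with the uniform constant from the first step, the whole statement reduces to proving
\[
\|g_\varepsilon\|_{L^2([0,T];\mathrm H^{s}_{\mathcal H_{\hbar,V}})}\longrightarrow 0\qquad(\varepsilon\to0).
\]
Two of the three pieces are routine: $\|f-f_\varepsilon\|_{L^2([0,T];\mathrm H^s_{\mathcal H_{\hbar,V}})}\to0$ is exactly the mollifier convergence above, while for the last piece one estimates $\|(q_\varepsilon-q)u\|_{\mathrm H^s_{\mathcal H_{\hbar,V}}}\le |q_\varepsilon-q|\,\|u\|_{\mathrm H^{1+s}_{\mathcal H_{\hbar,V}}}$ pointwise in $t$ and concludes by dominated convergence, using $q_\varepsilon\to q$ in $L^2([0,T])$ and $u\in C([0,T];\mathrm H^{1+s}_{\mathcal H_{\hbar,V}})$.

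The middle term $(a_\varepsilon-a)\mathcal H_{\hbar,V}u$ is where the real work lies, and I expect it to be the main obstacle. The difficulty is one of Sobolev bookkeeping: $\mathcal H_{\hbar,V}$ raises the order by two, so from $u\in C([0,T];\mathrm H^{1+s}_{\mathcal H_{\hbar,V}})$ one only gets $\mathcal H_{\hbar,V}u\in C([0,T];\mathrm H^{s-1}_{\mathcal H_{\hbar,V}})$, one derivative short of the $\mathrm H^{s}_{\mathcal H_{\hbar,V}}$-level at which \eqref{uestt} measures the forcing. The point to establish is therefore that the classical solution in fact carries $\mathcal H_{\hbar,V}u\in L^2([0,T];\mathrm H^{s}_{\mathcal H_{\hbar,V}})$, after which
\[
\|(a_\varepsilon-a)\mathcal H_{\hbar,V}u\|_{\mathrm H^s_{\mathcal H_{\hbar,V}}}
\le\|a_\varepsilon-a\|_{L^\infty}\,\|\mathcal H_{\hbar,V}u\|_{\mathrm H^s_{\mathcal H_{\hbar,V}}}
\]
together with the uniform convergence $a_\varepsilon\to a$ closes the argument by dominated convergence. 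I would secure the needed regularity either by reading it off from the spectral (Fourier) representation of the classical solution employed in the proof of Theorem \ref{class}, or, failing that, by first establishing convergence at the level $L^2([0,T];\mathrm H^{s}_{\mathcal H_{\hbar,V}})$ and then attempting to gain one order through the relation $a_\varepsilon\mathcal H_{\hbar,V}w_\varepsilon=g_\varepsilon-\partial_t^2 w_\varepsilon-q_\varepsilon w_\varepsilon$ and $a_\varepsilon\ge a_0$ (the delicate point being the control of $\partial_t^2 w_\varepsilon$). Once $\|g_\varepsilon\|_{L^2([0,T];\mathrm H^s_{\mathcal H_{\hbar,V}})}\to0$ is in hand, \eqref{uestt} yields $w_\varepsilon\to0$ in $C([0,T];\mathrm H^{1+s}_{\mathcal H_{\hbar,V}})$, and hence a fortiori in $L^2([0,T];\mathrm H^{1+s}_{\mathcal H_{\hbar,V}})$, which is precisely the assertion.
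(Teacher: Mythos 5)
Your proof follows the same route as the paper's own proof: the paper also sets $w_\varepsilon:=\tilde u-u_\varepsilon$, observes that $w_\varepsilon$ solves the zero-data Cauchy problem with source $g_\varepsilon=(a_\varepsilon-a)\mathcal{H}_{\hbar,V}\tilde u+(q_\varepsilon-q)\tilde u+(f-f_\varepsilon)$, and concludes from the energy estimate that $\|w_\varepsilon(t,\cdot)\|^{2}_{\mathrm{H}_{\mathcal{H}_{\hbar,V}}^{1+s}}+\|\partial_t w_\varepsilon(t,\cdot)\|^{2}_{\mathrm{H}_{\mathcal{H}_{\hbar,V}}^{s}}\lesssim\|g_\varepsilon\|^{2}_{L^{2}([0,T];\mathrm{H}_{\mathcal{H}_{\hbar,V}}^{s})}$. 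Your treatment of the $(f-f_\varepsilon)$- and $(q_\varepsilon-q)$-terms, and your observation that the constant $C_T$ of \eqref{uestt} can be taken independent of $\varepsilon$, agree with (and are more explicit than) the paper. The step you single out as the main obstacle is precisely the step the paper passes over in silence: it asserts without argument that $g_\varepsilon\in L^{2}([0,T];\mathrm{H}_{\mathcal{H}_{\hbar,V}}^{s})$ and that $g_\varepsilon\to 0$ there.

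Your diagnosis of that step is correct, but neither of your two proposed repairs can work, so your proof (like the paper's) is genuinely incomplete at this one point. Writing $\mathrm{H}^{\sigma}$ for $\mathrm{H}^{\sigma}_{\mathcal{H}_{\hbar,V}}$: since the eigenvalues $\lambda_\xi$ are unbounded, $\mathcal{H}_{\hbar,V}\tilde u\in L^{2}([0,T];\mathrm{H}^{s})$ is equivalent to $\tilde u\in L^{2}([0,T];\mathrm{H}^{2+s})$, and this fails in general for data in $\mathrm{H}^{1+s}\times\mathrm{H}^{s}$ because the wave propagator has no smoothing: with $a\equiv1$, $q\equiv0$, $f\equiv0$, $u_1=0$ one has $\widehat{\tilde u}(t,\xi)=\cos\left(t\sqrt{\lambda_\xi}\right)\widehat{u_0}(\xi)$, and since $\int_0^T\cos^{2}\left(t\sqrt{\lambda_\xi}\right)\mathrm{d}t\ge T/4$ for all large $\lambda_\xi$, any $u_0\in\mathrm{H}^{1+s}\setminus\mathrm{H}^{2+s}$ gives $\mathcal{H}_{\hbar,V}\tilde u\notin L^{2}([0,T];\mathrm{H}^{s})$; so nothing better can be read off the spectral representation. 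Your fallback via $a_\varepsilon\mathcal{H}_{\hbar,V}w_\varepsilon=g_\varepsilon-\partial_t^{2}w_\varepsilon-q_\varepsilon w_\varepsilon$ is circular, since controlling $\partial_t^{2}w_\varepsilon$ at the required level presupposes exactly the regularity being sought. What does close the argument (and would also repair the paper's proof) is a density argument built on the $\varepsilon$-uniformity you already established: pick $(u_0^{\delta},u_1^{\delta},f^{\delta})\in\mathrm{H}^{2+s}\times\mathrm{H}^{1+s}\times L^{2}([0,T];\mathrm{H}^{1+s})$ close to $(u_0,u_1,f)$ in $\mathrm{H}^{1+s}\times\mathrm{H}^{s}\times L^{2}([0,T];\mathrm{H}^{s})$; by Theorem \ref{class} with $s+1$ in place of $s$ the classical solution $u^{\delta}$ lies in $C([0,T];\mathrm{H}^{2+s})$, so your argument applies verbatim and gives $u^{\delta}_\varepsilon\to u^{\delta}$ for each fixed $\delta$, while the outer terms in $\|u-u_\varepsilon\|\le\|u-u^{\delta}\|+\|u^{\delta}-u^{\delta}_\varepsilon\|+\|u^{\delta}_\varepsilon-u_\varepsilon\|$ are bounded, uniformly in $\varepsilon$, by the distance between the data, thanks to the $\varepsilon$-independent $C_T$ in \eqref{uestt}. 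Note that your weaker fallback plan would at best combine convergence at the $\mathrm{H}^{s}$-level (run the same argument with $s-1$ in place of $s$) with the uniform $\mathrm{H}^{1+s}$-bound, which by interpolation yields convergence in $\mathrm{H}^{\sigma}$ for every $\sigma<1+s$ but not at the endpoint $\sigma=1+s$; so some form of the density step appears unavoidable.
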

Furthermore, we are interested in approximating the classical solution in Euclidean settings by the solutions in discrete settings. The following theorem shows that under the assumption that the solutions
in $\mathbb{R}^{n}$ exist, they can be recovered in the limit as $\hbar\to 0$. We require a little additional Sobolev
regularity to ensure that the convergence results are global on the whole of $\mathbb{R}^{n}$. 
\begin{theorem}\label{semlimit}
Let $V$ be a non-negative polynomial potential in \eqref{eucd}.	Let $u$ and $v$ be the solutions of the Cauchy problems \eqref{dispde1} on $\hbar \mathbb{Z}^{n}$ and \eqref{orgpde} on $\mathbb{R}^{n}$, respectively, with the same Cauchy data  $u_{0}$ and $u_{1}$. Assume the initial Cauchy data $(u_{0},u_{1})\in \mathrm{H}_{\mathcal{H}_{V}}^{1+s}\times \mathrm{H}_{\mathcal{H}_{V}}^{s}$  with $s>4+\frac{n}{2}$ and also satisfying $(u_{0}^{(4v_{j})},u_{1}^{(4v_{j})})\in \mathrm{H}_{\mathcal{H}_{V}}^{1+s}\times \mathrm{H}_{\mathcal{H}_{V}}^{s}$ for all $j=1,\dots,n$. Then for every $t \in[0, T]$, we have
	\begin{equation}
		\|v(t)-u(t)\|_{\mathrm{H}_{\mathcal{H}_{\hbar,V}}^{1+s}}+\left\|\partial_{t} v(t)-\partial_{t} u(t)\right\|_{\mathrm{H}_{\mathcal{H}_{\hbar,V}}^{s}} \rightarrow 0, \text { as } \hbar \rightarrow 0,
	\end{equation}
and the convergence is uniform on $[0,T]$.
\end{theorem}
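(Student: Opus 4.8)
The plan is to compare the genuine discrete solution $u$ with the sampling $\tilde v(t,k):=v(t,k)$, $k\in\hbar\mathbb{Z}^{n}$, of the continuous solution $v$ on the lattice, taking the discrete source term to be the restriction of the continuous one so that the two source contributions cancel. The starting point is that, on restrictions of smooth functions, the operators $\mathcal{H}_{\hbar,V}$ and $\mathcal{H}_{V}$ differ only through their Laplacian part, since $V$ is the same multiplication operator in both settings:
\[
\mathcal{H}_{\hbar,V}\tilde v-\bigl(\mathcal{H}_{V}v\bigr)\big|_{\hbar\mathbb{Z}^{n}}=\bigl(\mathcal{L}-\hbar^{-2}\mathcal{L}_{\hbar}\bigr)v\big|_{\hbar\mathbb{Z}^{n}}=:\mathcal{E}_{\hbar}v.
\]
Feeding this into \eqref{orgpde} restricted to the lattice shows that $\tilde v$ solves \eqref{dispde1} with the same data $(u_{0},u_{1})$ but with an additional source $a(t)\mathcal{E}_{\hbar}v$; hence the difference $w:=\tilde v-u$ solves \eqref{dispde1} with vanishing Cauchy data and source $a(t)\mathcal{E}_{\hbar}v$. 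Note that the quantity controlled in the theorem is exactly $\|w(t)\|_{\mathrm{H}_{\mathcal{H}_{\hbar,V}}^{1+s}}+\|\partial_{t}w(t)\|_{\mathrm{H}_{\mathcal{H}_{\hbar,V}}^{s}}$.

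First I would make the consistency error explicit. A fourth-order Taylor expansion of the translates $v(t,k\pm\hbar v_{j})$ yields, pointwise on the lattice,
\[
\mathcal{E}_{\hbar}v(t,k)=-\frac{\hbar^{2}}{12}\sum_{j=1}^{n}\partial_{j}^{4}v(t,k)+O(\hbar^{4}),
\]
with the remainder expressible as a weighted integral of the fourth directional derivatives $\partial_{j}^{4}v$. This isolates $\partial_{j}^{4}v$ as the object that must be controlled and explains the hypotheses $(u_{0}^{(4v_{j})},u_{1}^{(4v_{j})})\in\mathrm{H}_{\mathcal{H}_{V}}^{1+s}\times\mathrm{H}_{\mathcal{H}_{V}}^{s}$: they are exactly what is needed to propagate enough regularity onto $\partial_{j}^{4}v$.

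Next I would invoke the discrete energy estimate. Applying Theorem \ref{class} to the problem solved by $w$, whose Cauchy data vanish, gives, uniformly in $t\in[0,T]$,
\[
\|w(t)\|_{\mathrm{H}_{\mathcal{H}_{\hbar,V}}^{1+s}}^{2}+\|\partial_{t}w(t)\|_{\mathrm{H}_{\mathcal{H}_{\hbar,V}}^{s}}^{2}\leq C_{T}\,\bigl\|a\,\mathcal{E}_{\hbar}v\bigr\|_{L^{2}([0,T];\mathrm{H}_{\mathcal{H}_{\hbar,V}}^{s})}^{2}.
\]
Crucially, the constant $C_{T}$ of Theorem \ref{class} depends only on $a_{0}$, $\|a\|_{L^{\infty}}$, $\|a'\|_{L^{\infty}}$ and $\|q\|_{L^{\infty}}$, and so is independent of $\hbar$. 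The theorem therefore reduces to the single uniform estimate $\|\mathcal{E}_{\hbar}v\|_{L^{2}([0,T];\mathrm{H}_{\mathcal{H}_{\hbar,V}}^{s})}\lesssim\hbar^{2}$, which by the display above forces $\|w(t)\|\to0$ uniformly on $[0,T]$ at rate $\hbar^{2}$.

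The hard part is precisely this uniform truncation estimate, and it is here that the margin $s>4+\tfrac{n}{2}$ is consumed, splitting into two independent difficulties. First, one must bound $\partial_{j}^{4}v(t)$ in an $\mathcal{H}_{V}$-Sobolev norm uniformly in $t$; because $\partial_{j}^{4}$ does not commute with multiplication by $V$, differentiating \eqref{orgpde} produces a commutator $[\partial_{j}^{4},V]v$ of order three with polynomially growing coefficients, which one absorbs by viewing $\partial_{j}^{4}v$ as the solution of a wave equation of the same type with this commutator as an extra source, applying Theorem \ref{eucclass} and controlling the commutator through the embedding \eqref{semb} and the Dziubanski--Glowacki inequalities; the four derivatives lost here account for the $+4$ in the threshold. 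Second, one must transfer continuous $\mathcal{H}_{V}$-Sobolev control of $\partial_{j}^{4}v$ into discrete $\mathrm{H}_{\mathcal{H}_{\hbar,V}}^{s}$ control of its restriction, a sampling estimate of the form $\|g|_{\hbar\mathbb{Z}^{n}}\|_{\mathrm{H}_{\mathcal{H}_{\hbar,V}}^{s}}\lesssim\|g\|_{\mathrm{H}_{\mathcal{H}_{V}}^{s+n/2+\epsilon}}$ valid uniformly in $\hbar\in(0,1]$, where the extra $n/2$ derivatives guarantee that the lattice samples are square-summable with the continuous norm as a uniform majorant and simultaneously control aliasing. Granting these two ingredients, the factor $\hbar^{2}$ from the Taylor remainder delivers the claimed uniform convergence as $\hbar\to0$.
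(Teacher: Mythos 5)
Your proposal follows essentially the same route as the paper's proof: you compare $u$ with the lattice restriction of $v$, apply the $\hbar$-independent energy estimate of Theorem \ref{class} to the difference (which has zero Cauchy data and source $a(t)\left(\mathcal{H}_{V}-\mathcal{H}_{\hbar,V}\right)v$), and control the consistency error by a fourth-order Taylor expansion, which is exactly where the hypotheses $s>4+\frac{n}{2}$ and $(u_{0}^{(4v_{j})},u_{1}^{(4v_{j})})\in \mathrm{H}_{\mathcal{H}_{V}}^{1+s}\times \mathrm{H}_{\mathcal{H}_{V}}^{s}$ enter. The two ingredients you flag but leave unproved---propagating fourth-derivative regularity onto $v$ and an $\hbar$-uniform bound on discrete Sobolev norms of restrictions---correspond to the steps the paper itself handles briefly (deducing $v^{(4v_{j})}(t,\cdot)\in\mathrm{H}^{s}_{\mathcal{H}_{V}}$ from the hypothesis and bounding the translated fourth derivatives in \eqref{EQ:cht}), so your outline matches the paper's argument in substance.
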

 Here the initial data and the source term of the Cauchy problem \eqref{dispde1} is the evaluation of the initial data and the source term from \eqref{orgpde} on the lattice $\hbar\mathbb{Z}^{n}$.
 
Similarly, in the semiclassical limit $\hbar\to 0$, the very weak solution of the Cauchy problem in the Euclidean setting can be approximated by the very weak solution in the discrete setting.
\begin{theorem}\label{vvyksemlimit}
Let $V$ be a non-negative polynomial potential in \eqref{eucd}. Let $(u_{\varepsilon})_{\varepsilon}$ and $(v_{\varepsilon})_{\varepsilon}$ be the very weak solutions of the Cauchy problems \eqref{dispde1} on $\hbar \mathbb{Z}^{n}$ and \eqref{orgpde} on $\mathbb{R}^{n}$, respectively, with the same Cauchy data $u_{0}$ and $u_{1}$. Assume the initial Cauchy data $(u_{0},u_{1})\in \mathrm{H}_{\mathcal{H}_{V}}^{1+s}\times \mathrm{H}_{\mathcal{H}_{V}}^{s}$  with $s>4+\frac{n}{2}$ and also satisfying $(u_{0}^{(4v_{j})},u_{1}^{(4v_{j})})\in \mathrm{H}_{\mathcal{H}_{V}}^{1+s}\times \mathrm{H}_{\mathcal{H}_{V}}^{s}$ for all $j=1,\dots,n$. Then for every $\varepsilon\in(0,1]$ and $t\in[0,T]$, we have
\begin{equation}
\|v_{\varepsilon}(t)-u_{\varepsilon}(t)\|_{\mathrm{H}_{\mathcal{H}_{\hbar,V}}^{1+s}}+\left\|\partial_{t} v_{\varepsilon}(t)-\partial_{t}u_{\varepsilon}(t)\right\|_{\mathrm{H}_{\mathcal{H}_{\hbar,V}}^{s}} \rightarrow 0 \text { as } \hbar \rightarrow 0,
\end{equation}
where the convergence is uniform on $[0,T]$ but pointwise for $\varepsilon\in(0,1]$.
\end{theorem}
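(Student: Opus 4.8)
The plan is to reduce the claim to a fiber-wise (in $\varepsilon$) application of Theorem \ref{semlimit}. The essential observation is that once the parameter $\varepsilon \in (0,1]$ is frozen, the regularised coefficients $a_{\varepsilon},q_{\varepsilon}$ and the regularised source $f_{\varepsilon}$ from Definition \ref{vwkdef} are genuinely regular: $a_{\varepsilon} \in L^{\infty}_{1}([0,T])$, $q_{\varepsilon} \in L^{\infty}([0,T])$, and $f_{\varepsilon}$ lies in $L^{2}([0,T];\mathrm{H}^{s}_{\mathcal{H}_{\hbar,V}})$ on the lattice and in $L^{2}([0,T];\mathrm{H}^{s}_{\mathcal{H}_{V}})$ on $\mathbb{R}^{n}$. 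Consequently, for each fixed $\varepsilon$ the nets $u_{\varepsilon}$ and $v_{\varepsilon}$ are not merely very weak solutions but actual classical solutions of the regularised discrete problem \eqref{reg} on $\hbar\mathbb{Z}^{n}$ and of the corresponding regularised Euclidean problem on $\mathbb{R}^{n}$, both driven by one and the same triple $(a_{\varepsilon},q_{\varepsilon},f_{\varepsilon})$.

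First I would verify that the hypotheses of Theorem \ref{semlimit} are inherited by the regularised data. Since the Friedrichs mollifier satisfies $\psi \geq 0$ and $\int_{\mathbb{R}} \psi = 1$, convolution preserves strict positivity: the bound $a \geq a_{0}>0$ forces $a_{\varepsilon}(t) = (a*\psi_{\omega(\varepsilon)})(t) \geq a_{0}$ for every $t\in[0,T]$ and every $\varepsilon\in(0,1]$, so the uniform ellipticity required by Theorem \ref{semlimit} holds. The initial data $(u_{0},u_{1})$ are left unchanged by the regularisation and, by assumption, lie in $\mathrm{H}^{1+s}_{\mathcal{H}_{V}} \times \mathrm{H}^{s}_{\mathcal{H}_{V}}$ with $s>4+\frac{n}{2}$, together with the required regularity $(u_{0}^{(4v_{j})},u_{1}^{(4v_{j})})\in \mathrm{H}^{1+s}_{\mathcal{H}_{V}} \times \mathrm{H}^{s}_{\mathcal{H}_{V}}$ of the shifted data; these are precisely the standing assumptions of Theorem \ref{semlimit}.

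With these verifications in hand, I would apply Theorem \ref{semlimit} verbatim, but with the fixed regular coefficients $(a_{\varepsilon},q_{\varepsilon},f_{\varepsilon})$ playing the role of $(a,q,f)$ and with $V$ the non-negative polynomial potential. This yields, for each fixed $\varepsilon \in (0,1]$,
\begin{equation*}
\|v_{\varepsilon}(t) - u_{\varepsilon}(t)\|_{\mathrm{H}^{1+s}_{\mathcal{H}_{\hbar,V}}} + \|\partial_{t} v_{\varepsilon}(t) - \partial_{t} u_{\varepsilon}(t)\|_{\mathrm{H}^{s}_{\mathcal{H}_{\hbar,V}}} \longrightarrow 0 \quad \text{as } \hbar \to 0,
\end{equation*}
with the convergence uniform in $t \in [0,T]$, which is exactly the asserted conclusion.

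The main point to be careful about is the nature of the convergence in the two parameters, and this is where the statement's phrasing originates. The constant controlling the $\hbar \to 0$ rate is the one produced by Theorem \ref{semlimit}, which is in turn governed by the constant $C_{T}$ of Theorem \ref{class} and therefore depends on $\|a_{\varepsilon}\|_{L^{\infty}}$, $\|a_{\varepsilon}'\|_{L^{\infty}}$, $\|q_{\varepsilon}\|_{L^{\infty}}$ and on the data norms. For a frozen $\varepsilon$ all of these are finite, so the limit in $\hbar$ holds; but since $a_{\varepsilon},q_{\varepsilon},f_{\varepsilon}$ are only $L^{\infty}_{1}$- (resp.\ $L^{\infty}$-, $L^{2}$-) moderate, the moderate bounds $\|\partial_{t}^{j} a_{\varepsilon}\|_{L^{\infty}} \lesssim \varepsilon^{-N-j}$ may blow up as $\varepsilon \to 0$. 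This is precisely why the convergence is genuinely uniform only in $t$ and merely pointwise in $\varepsilon$. The only remaining bookkeeping is to confirm that the regularisations used on $\hbar\mathbb{Z}^{n}$ and on $\mathbb{R}^{n}$ coincide: $a_{\varepsilon},q_{\varepsilon}$ are functions of $t$ alone and hence identical on both sides, while $f_{\varepsilon}$ is mollified in $t$ and then evaluated on the lattice in the discrete problem, so that for each $\varepsilon$ the two solutions being compared solve the same wave equation on the two spaces, and Theorem \ref{semlimit} applies without modification.
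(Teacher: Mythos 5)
Your proposal is correct and matches the paper's intent exactly: the paper proves this theorem by remarking that the argument of Theorem \ref{semlimit} goes through "without any substantial modifications," i.e.\ one runs the same comparison for each frozen $\varepsilon\in(0,1]$ with the regularised triple $(a_{\varepsilon},q_{\varepsilon},f_{\varepsilon})$ in place of $(a,q,f)$, which is precisely your fiber-wise application of Theorem \ref{semlimit}. Your additional checks (positivity $a_{\varepsilon}\geq a_{0}$ via the nonnegative mollifier, identical regularisations on both spaces, and the $\varepsilon$-dependence of the constants explaining why the convergence is only pointwise in $\varepsilon$) are exactly the "non-substantial modifications" the paper leaves implicit.
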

\section{Spectrum and Fourier analysis of $\mathcal{H}_{\hbar,V}$}\label{faho}
  The  discrete Schr\"{o}dinger operator on  $\hbar \mathbb{Z}^{n}$ denoted by $\mathcal{H}_{\hbar,V}$  is defined by
\begin{equation}
	\mathcal{H}_{\hbar,V}u(k):=\left(-\hbar^{-2}\mathcal{L}_{\hbar}+V\right)u(k),\quad k\in\hbar\mathbb{Z}^{n},
\end{equation}
where $\mathcal{L}_{\hbar}$ is the discrete Laplacian   given by
\begin{equation}
	\mathcal{L}_{\hbar} u(k):=\sum\limits_{j=1}^{n}\left(u\left(k+\hbar v_{j}\right)+u\left(k-\hbar v_{j}\right)\right)-2 n u(k),\quad k\in\hbar\mathbb{Z}^{n},
\end{equation}
and the potential $V$ is a non-negative multiplication operator by $V(k)$.   We also note that $-\mathcal{L}_{\hbar}$ and $V$ are non-negative operators and so is $\mathcal{H}_{\hbar,V}$. 

Further, the operator  $\mathcal{H}_{\hbar,V}:\mathrm{Dom}\left(\mathcal{H}_{\hbar,V}\right)\to \ell^{2}(\hbar\mathbb{Z}^{n})$ is a densely defined and self-adjoint linear operator in $\ell^{2}(\hbar\mathbb{Z}^{n})$. The domain $\mathrm{Dom}\left(\mathcal{H}_{\hbar,V}\right)$ is given by
\begin{equation}
	\mathrm{Dom}\left(\mathcal{H}_{\hbar,V}\right):=\left\{u\in\ell^{2}(\hbar\mathbb{Z}^{n}):(I+\mathcal{H}_{\hbar,V})u\in\ell^{2}(\hbar\mathbb{Z}^{n})\right\}.
\end{equation}

We will now prove that $\mathcal{H}_{\hbar,V}$ has purely discrete spectrum; that is, $\sigma_{\mathrm{ess}}(\mathcal{H}_{\hbar,V})$ is an empty essential spectrum. It is well known to
be equivalent to proving that $\left(\mathcal{H}_{\hbar,V}+I\right)^{-1}$ is compact, i.e., $\mathcal{H}_{\hbar,V}$ has a compact resolvent (see \cite{edmund,Miklav}). Further, the set of eigenvectors will form a complete  orthonormal basis of $\ell^{2}(\hbar\mathbb{Z}^{n})$, since $(\mathcal{H}_{\hbar,V}+I)^{-1}$ is a compact operator.  In order to prove the compactness, we need the following preparatory lemma:
\begin{lemma}
	Let $V\geq 0$ be a multiplication operator satisfying $|V(k)|\to \infty$ as $|k|\to \infty$. Then $\left(V+I\right)^{-1}$ is compact.
\end{lemma}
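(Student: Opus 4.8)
The plan is to exploit that $(V+I)^{-1}$ is a diagonal (multiplication) operator on $\ell^{2}(\hbar\mathbb{Z}^{n})$ whose symbol decays at infinity, and to approximate it in operator norm by finite-rank truncations. First I would observe that, since $V\geq 0$, the operator $V+I$ acts as multiplication by $V(k)+1\geq 1$ and is therefore boundedly invertible, with $(V+I)^{-1}$ equal to multiplication by $m(k):=(V(k)+1)^{-1}$; in particular $0<m(k)\leq 1$ for every $k$. The hypothesis $|V(k)|\to\infty$ as $|k|\to\infty$ (together with $V\geq 0$) immediately gives $m(k)\to 0$ as $|k|\to\infty$, which is the decay we will exploit.

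Next, for each $N\in\mathbb{N}$ I would introduce the truncated operator $M_{N}$ defined by $(M_{N}u)(k)=m(k)u(k)$ when $|k|\leq N$ and $(M_{N}u)(k)=0$ when $|k|>N$. Because the lattice ball $\{k\in\hbar\mathbb{Z}^{n}:|k|\leq N\}$ is a \emph{finite} set, each $M_{N}$ has finite-dimensional range, hence is of finite rank and in particular compact. The key estimate is that the difference $(V+I)^{-1}-M_{N}$ is again a multiplication operator, now supported on the tail $\{|k|>N\}$, so its operator norm equals $\sup_{|k|>N}m(k)$, using the elementary fact that a diagonal operator on $\ell^{2}$ has operator norm equal to the supremum of the moduli of its diagonal entries. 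Since $m(k)\to 0$, this supremum tends to $0$ as $N\to\infty$, so $M_{N}\to (V+I)^{-1}$ in the operator norm.

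To conclude, I would invoke that the space of compact operators on $\ell^{2}(\hbar\mathbb{Z}^{n})$ is closed under the operator norm; as it contains each finite-rank $M_{N}$ and $M_{N}\to (V+I)^{-1}$, the limit $(V+I)^{-1}$ is compact. There is no substantial obstacle in this argument: the only points requiring care are the finiteness of the lattice balls $\{|k|\leq N\}$, which is exactly what guarantees the finite-rank property of $M_{N}$, and the identification of the operator norm of the diagonal tail operator with the supremum of its symbol, which produces the clean decay estimate. This lemma will then feed directly into the proof that $(\mathcal{H}_{\hbar,V}+I)^{-1}$ is compact, via a comparison of resolvents exploiting $\mathcal{H}_{\hbar,V}\geq V\geq 0$.
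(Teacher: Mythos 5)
Your proof is correct and follows essentially the same route as the paper: both identify $(V+I)^{-1}$ as multiplication by $(V(k)+1)^{-1}$, approximate it by the finite-rank truncations onto the finite lattice balls $\{|k|\leq N\}$, bound the tail operator's norm by $\sup_{|k|>N}(V(k)+1)^{-1}\to 0$, and conclude compactness from norm-closedness of the compact operators. The only cosmetic difference is that you assert the tail norm \emph{equals} the supremum of the symbol while the paper only proves the inequality $\leq$, which is all that is needed.
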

\begin{proof}
Define an operator $Q$ by
\begin{equation}
	Qu(k):=\frac{1}{V(k)+1}u(k),\quad u\in\ell^{2}(\hbar\mathbb{Z}^{n}).
\end{equation}
It is obvious to verify that $(V+I)Qu=u$ and $Q(V+I)u=u$ which implies $(V+I)^{-1}=Q$. Moreover, $Q$ is a bounded linear operator on $\ell^{2}(\hbar\mathbb{Z}^{n})$, since
\begin{equation}
\left\|Qu\right\|_{\ell^{2}(\hbar\mathbb{Z}^{n})}^{2}=\sum_{k\in\hbar\mathbb{Z}^{n}}\frac{1}{|V(k)+1|^{2}}|u(k)|^{2}\leq \sum_{k\in\hbar\mathbb{Z}^{n}}|u(k)|^{2}=\left\|u\right\|_{\ell^{2}(\hbar\mathbb{Z}^{n})}^{2}.
\end{equation}
 The compactness of $Q$ will be shown by using the fact that it can be approximated by a  sequence of finite rank operators. Let $Q_{m}:\ell^{2}(\hbar\mathbb{Z}^{n})\to\ell^{2}(\hbar\mathbb{Z}^{n})$ be defined by
 \begin{equation}
 	Q_{m}u(k):=\left\{
 	\begin{array}{cc}
 		Qu(k), & |k|\leq m, \\
 		0, & \mathrm{otherwise}.
 	\end{array}\nonumber\right.
 \end{equation}
 Then $Q_{m}$ is bounded and a finite rank operator, implying that $Q_{m}$ is compact. Additionally,
 \begin{eqnarray}
 	\left\|(Q-Q_{m})u\right\|^{2}_{\ell^{2}(\hbar\mathbb{Z}^{n})}&=&\sum_{|k|>m}\frac{1}{|V(k)+1|^{2}}|u(k)|^{2}\nonumber\\\
 	&\leq&\Phi_{m}^{2}\left\|u\right\|^{2}_{\ell^{2}(\hbar\mathbb{Z}^{n})},
 \end{eqnarray}
where $\Phi_{m}:=\sup\left\{\frac{1}{|V(k)+1|}:|k|>m\right\}$ whence we have  
$\left\|(Q-Q_{m})\right\|\leq\Phi_{m}.$ Therefore
 $\left\|(Q-Q_{m})\right\|\to 0$, since $\Phi_{m}\to 0$ as $m\to\infty$. Hence $Q_{m}\to Q$ in norm. Thus $Q=(V+I)^{-1}$ is compact.
\end{proof} 
Now we are in position to prove that the discrete Schr\"{o}dinger operator $\mathcal{H}_{\hbar,V}$  has a purely discrete spectrum.
\begin{proof}[Proof of Theorem \ref{mdisc}]
Using the second resolvent identity for the discrete Schr\"{o}dinger operator and the potential $V$  at $\lambda=-1\in \rho\left(\mathcal{H}_{\hbar,V}\right)\cap \rho\left(V\right)$, we have
		\begin{equation}\label{2ri}
			\left(\mathcal{H}_{\hbar,V}+I\right)^{-1}	=\left(\mathcal{H}_{\hbar,V}+I\right)^{-1}\left(-\hbar^{-2}\mathcal{L}_{\hbar}\right)\left(V+I\right)^{-1}+\left(V+I\right)^{-1}.
		\end{equation}
	 The operators $\left(\mathcal{H}_{\hbar,V}+I\right)^{-1}$ and $	\left(V+I\right)^{-1}$ must be bounded, since $\lambda=-1$ belongs to their resolvent set. The boundedness of the discrete Laplacian on $\ell^{2}(\hbar\mathbb{Z}^{n})$ implies that $\left(\mathcal{H}_{\hbar,V}+I\right)^{-1}\left(-\hbar^{-2}\mathcal{L}_{\hbar}\right)$ is bounded. Being the product of a bounded and a compact operator, the right-hand side of \eqref{2ri} is compact. Hence $\left(\mathcal{H}_{\hbar,V}+I\right)^{-1}$ is compact.
\end{proof}

 Let us denote by $\sigma\left(\mathcal{H}_{\hbar,V}\right)=\{\lambda_{\xi}\geq 0:\xi\in\mathcal{I}_{\hbar}\}$  the discrete spectrum of the Schr\"{o}dinger operator $\mathcal{H}_{\hbar,V}$, where $\mathcal{I}_{\hbar}$ is a countable set and we arrange the eigenvalues in ascending order in accordance with the multiplicities that occur
\begin{equation*}\label{eigenorder}
	|\lambda_{\xi}|\leq |\lambda_{\eta}|,\text{ whenever } |\xi|\leq |\eta|,
\end{equation*}
for all  $\xi,\eta\in\mathcal{I}_{\hbar}$.  Furthermore, using the assumption $|V(k)|\to \infty$ as $|k|\to\infty$,  it is easy to conclude that the eigenvalues $\lambda_{\xi}$ of the discrete Schr\"{o}dinger operator $\mathcal{H}_{\hbar,V}$ tends to infinity as $|\xi|\to \infty$.

Let  $u_{\xi}$ be the eigenfunction  associated with the eigenvalue $\lambda_{\xi}$ for each $\xi\in\mathcal{I}_{\hbar}$,  i.e., 
\begin{equation}\label{eigen}
	\mathcal{H}_{\hbar,V}u_{\xi}=\lambda_{\xi}u_{\xi},\quad \text{for all } \xi\in \mathcal{I}_{\hbar}.
\end{equation} 
Consequently,   the  set of eigenvectors $\{u_{\xi}\}_{\xi\in\mathcal{I}_{\hbar}}$ forms an orthonormal basis for $\ell^{2}(\hbar\mathbb{Z}^{n})$, i.e.,
 \begin{equation}
\left(u_{\xi},u_{\eta}\right):=\left\{
	\begin{array}{cc}
		1, & \text{if }\xi=\eta, \\
		0, & \text{if }\xi\neq\eta,
	\end{array}\right.
\end{equation}
where 
\begin{equation}
	\left(f,g\right):=\sum_{k \in \hbar\mathbb{Z}^{n}}f(k)\overline{g(k)},
\end{equation}
is the usual inner product of the Hilbert space $\ell^{2}(\hbar\mathbb{Z}^{n})$.

Now, we will describe the spaces of distributions generated by $\mathcal{H}_{\hbar,V}$ and  its adjoint, and the
related global Fourier analysis. In our settings, there is a considerable reduction in complexity since the discrete Schr\"{o}dinger
operator is self-adjoint.

 The space $\mathrm{H}^{\infty}_{\mathcal{H}_{\hbar,V}}:=\operatorname{Dom}\left(\mathcal{H}_{\hbar,V}^{\infty}\right)$ is called the space of test functions for $\mathcal{H}_{\hbar,V}$,  defined by
\begin{equation*}
	\operatorname{Dom}\left(\mathcal{H}_{\hbar,V}^{\infty}\right):=\bigcap_{k=1}^{\infty} \operatorname{Dom}\left(\mathcal{H}_{\hbar,V}^{k}\right),
\end{equation*}
where $\operatorname{Dom}\left(\mathcal{H}_{\hbar,V}^{k}\right)$ is the domain of the operator $\mathcal{H}_{\hbar,V}^{k}$ defined as
\begin{equation*}
	\operatorname{Dom}\left(\mathcal{H}_{\hbar,V}^{k}\right):=\left\{u \in \ell^{2}(\hbar\mathbb{Z}^{n}): (I+\mathcal{H}_{\hbar,V})^{j} u \in \operatorname{Dom}(\mathcal{H}_{\hbar,V}), j=0,1, \ldots, k-1\right\}.
\end{equation*}
 The Fréchet topology of $\mathrm{H}^{\infty}_{\mathcal{H}_{\hbar,V}}$ is given by the family of norms
\begin{equation*}
\|u\|_{\mathrm{H}^{k}_{\mathcal{H}_{\hbar,V}}}:=\max _{j \leq k}\left\|(I+\mathcal{H}_{\hbar,V})^{j} u\right\|_{\ell^{2}(\hbar\mathbb{Z}^{n})}, \quad k \in \mathbb{N}_{0}, u \in \mathrm{H}^{\infty}_{\mathcal{H}_{\hbar,V}}.
\end{equation*}
The space 
\begin{equation}\label{hvdistr}
\mathrm{H}^{-\infty}_{\mathcal{H}_{\hbar,V}}:=\mathcal{L}\left(\mathrm{H}^{\infty}_{\mathcal{H}_{\hbar,V}}, \mathbb{C}\right),	
\end{equation}
 of all continuous linear functionals on $\mathrm{H}^{\infty}_{\mathcal{H}_{\hbar,V}}$ is called the space of $\mathcal{H}_{\hbar,V}$-distributions. For $w \in \mathrm{H}^{-\infty}_{\mathcal{H}_{\hbar,V}}$ and $u \in \mathrm{H}^{\infty}_{\mathcal{H}_{\hbar,V}}$, we shall write
\begin{equation*}
w(u)=( w, u) =\sum\limits_{k\in\hbar\mathbb{Z}^{n}}w(k)\overline{u(k)}.
\end{equation*}
For any $u \in \mathrm{H}^{\infty}_{\mathcal{H}_{\hbar,V}}$, the functional
\begin{equation*}
\mathrm{H}^{\infty}_{\mathcal{H}_{\hbar,V}} \ni v \mapsto (v,u),
\end{equation*}
is a $\mathcal{H}_{\hbar,V}$-distribution, which gives an embedding $u \in \mathrm{H}^{\infty}_{\mathcal{H}_{\hbar,V}} \hookrightarrow \mathrm{H}^{-\infty}_{\mathcal{H}_{\hbar,V}}.$ 
\begin{defi}[\textbf{Schwartz Space} $\mathcal{S}(\mathcal{I}_{\hbar})$]
	Let $\mathcal{S}(\mathcal{I}_{\hbar})$ denote the space of rapidly decaying functions $\varphi:\mathcal{I}_{\hbar}\to \mathbb{C}$, i.e.,  $\varphi \in \mathcal{S}(\mathcal{I}_{\hbar})$ if for any $M<\infty$, there exists a constant $C_{\varphi, M}$ such that
\begin{equation*}
	|\varphi(\xi)| \leq C_{\varphi, M}\langle\xi\rangle^{-M}, \quad\text{for all } \xi\in\mathcal{I}_{\hbar},
\end{equation*}
 where we denote
 \begin{equation*}
	\langle\xi\rangle:=\left(1+\lambda_{\xi}\right)^{\frac{1}{2 }}.
 \end{equation*}
\end{defi}
The topology on $\mathcal{S}(\mathcal{I}_{\hbar})$ is given by the family of seminorms $p_{k}$, where $k \in \mathbb{N}_{0}$ and
\begin{equation*}
p_{k}(\varphi):=\sup _{\xi \in \mathcal{I}_{\hbar}}\langle\xi\rangle^{k}|\varphi(\xi)| .
\end{equation*}
 We now define the $\mathcal{H}_{\hbar,V}$-Fourier transform on $\mathrm{H}^{\infty}_{\mathcal{H}_{\hbar,V}}$.
\begin{defi}
	We define the $\mathcal{H}_{\hbar,V}$-Fourier transform $	\mathcal{F}_{\mathcal{H}_{\hbar,V}}: \mathrm{H}^{\infty}_{\mathcal{H}_{\hbar,V}} \rightarrow \mathcal{S}(\mathcal{I}_{\hbar})$ by the formula
\begin{equation}
	\left(\mathcal{F}_{\mathcal{H}_{\hbar,V}} f\right)(\xi)=\widehat{f}(\xi):=\sum\limits_{k\in\hbar\mathbb{Z}^{n}} f(k) \overline{u_{\xi}(k)}, \quad \xi\in\mathcal{I}_{\hbar},
\end{equation}
where $u_{\xi}$ satisfies \eqref{eigen}.
\end{defi}
The above expression is well-defined by the H\"{o}lder inequality
\begin{equation*}
	\left|\widehat{f}(\xi)\right|\leq \left\|f\right\|_{\ell^{2}(\hbar\mathbb{Z}^{n})}\left\|u_{\xi}\right\|_{\ell^{2}(\hbar\mathbb{Z}^{n})}=\left\|f\right\|_{\ell^{2}(\hbar\mathbb{Z}^{n})}<\infty,
\end{equation*}
and then can be extended to $\mathrm{H}^{-\infty}_{\mathcal{H}_{\hbar,V}}$ in the usual way.
The $\mathcal{H}_{\hbar,V}$-Fourier transform $\mathcal{F}_{\mathcal{H}_{\hbar,V}}:\mathrm{H}^{\infty}_{\mathcal{H}_{\hbar,V}} \rightarrow \mathcal{S}(\mathcal{I}_{\hbar})$ is a homeomorphism and its  inverse 
\begin{equation*}
\mathcal{F}_{\mathcal{H}_{\hbar,V}}^{-1}:\mathcal{S}\left(\mathcal{I}_{\hbar}\right)\rightarrow	\mathrm{H}^{\infty}_{\mathcal{H}_{\hbar,V}},
\end{equation*}
is given by 
\begin{equation}
	\left(\mathcal{F}_{\mathcal{H}_{\hbar,V}}^{-1}g\right)(k):=\sum\limits_{\xi\in\mathcal{I}_{\hbar}}g(\xi)u_{\xi}(k),\quad g\in \mathcal{S}(\mathcal{I}_{\hbar}),
\end{equation}
 so that the $\mathcal{H}_{\hbar,V}$-Fourier inversion formula becomes
\begin{equation}
f(k)=\sum_{\xi \in \mathcal{I}_{\hbar}} \widehat{f}(\xi) u_{\xi}(k), \quad \text { for all } f \in \mathrm{H}^{\infty}_{\mathcal{H}_{\hbar,V}}.
\end{equation}
Consequently, the $\mathcal{H}_{\hbar,V}$-Plancherel formula takes the form
\begin{equation}\label{planch}
	\sum_{k \in \hbar\mathbb{Z}^{n}}|f(k)|^{2}	=\left(\sum_{\xi \in \mathcal{I}_{\hbar}} \widehat{f}(\xi) u_{\xi},\sum_{\eta \in \mathcal{I}_{\hbar}} \widehat{f}(\eta) u_{\eta}\right)=\sum\limits_{\xi,\eta\in\mathcal{I}_{\hbar}}\widehat{f}(\xi)\overline{\widehat{f}(\eta)}\left(u_{\xi},u_{\eta}\right)=\sum_{\xi \in \mathcal{I}_{\hbar}}|\widehat{f}(\xi)|^{2}.
\end{equation}
The  $\mathcal{H}_{\hbar,V}$-Fourier transform of the discrete Schr\"{o}dinger operator $\mathcal{H}_{\hbar,V}$ is
\begin{equation}\label{symbol}
	\left(\mathcal{F}_{\mathcal{H}_{\hbar,V}}\mathcal{H}_{\hbar,V}f\right)(\xi)=(\mathcal{H}_{\hbar,V}f,u_{\xi})=(f,\mathcal{H}_{\hbar,V}u_{\xi})=(f,\lambda_{\xi}u_{\xi})=\lambda_{\xi}\widehat{f}(\xi),\quad \xi\in\mathcal{I}_{\hbar},
\end{equation} 
since $\mathcal{H}_{\hbar,V}$ is a self-adjoint operator.
 Recalling \eqref{sobo}, \eqref{symbol} and the Plancherel's identity \eqref{planch}, we have
\begin{equation}\label{fnorm}
	\|f\|_{\mathrm{H}_{\mathcal{H}_{\hbar,V}}^{s}}:=\left\|(I+\mathcal{H}_{\hbar,V})^{s / 2} f\right\|_{\ell^{2}(\hbar\mathbb{Z}^{n})}=\left(\sum\limits_{\xi\in\mathcal{I}_{\hbar}}\langle\xi\rangle^{2s}\widehat{f}(\xi)|^{2}\right)^{\frac{1}{2}}=\left(\sum\limits_{\xi\in\mathcal{I}_{\hbar}}\left(1+\lambda_{\xi}\right)^{s}\widehat{f}(\xi)|^{2}\right)^{\frac{1}{2}}.
\end{equation}

\section{Proofs of the main results}\label{secmtp}
In this section, we will prove the existence of classical and very weak solutions of the Cauchy problem \eqref{dispde1} with regular and irregular coefficients, respectively. Further, we will prove the uniqueness and consistency for the very weak solutions.
\begin{proof}[Proof of Theorem \ref{class}]

Taking the $\mathcal{H}_{\hbar,V}$-Fourier transform of the Cauchy problem \eqref{dispde1} with respect to
$k\in\hbar\mathbb{Z}^{n}$ and using \eqref{symbol}, we obtain
\begin{equation}\label{dispde0}
	\left\{\begin{array}{l}
		\partial_{t}^{2} \widehat{u}(t, \xi)+a(t) \lambda_{\xi}\widehat{u}(t, \xi)+q(t)\widehat{u}(t, \xi)=\widehat{f}(t,\xi),\quad(t, \xi) \in(0, T] \times \mathcal{I}_{\hbar}, \\
		\widehat{u}(0, \xi)=\widehat{u}_{0}(\xi),\quad \xi \in \mathcal{I}_{\hbar}, \\
		\partial_{t} \widehat{u}(0,\xi)=\widehat{u}_{1}(\xi),\quad \xi \in \mathcal{I}_{\hbar}.
	\end{array}\right.
\end{equation}
 The basic idea of our further analysis is that we can investigate each equation in \eqref{dispde0} separately and then collect the estimates using the $\mathcal{H}_{\hbar,V}$-Plancherel formula \eqref{planch}. Thus, let us fix $\xi\in\mathcal{I}_{\hbar}$ and  use the  transformation
\begin{equation}
	U(t,\xi):=\left(\begin{array}{c}
		i\langle\xi\rangle \widehat{u}(t,\xi) \\
		\partial_{t}\widehat{u}(t,\xi)
	\end{array}\right), \quad U_{0}(\xi):=\left(\begin{array}{c}
		i \langle\xi\rangle\widehat{u}_{0}(\xi) \\
		\widehat{u}_{1}(\xi)
	\end{array}\right),		
\end{equation}
where $\langle \xi\rangle=(1+\lambda_{\xi})^{\frac{1}{2}}$, and the matrices
\begin{equation}
	A(t):=\left(\begin{array}{cc}
		 0 & 1\\
		 a(t) & 0
	\end{array}\right), \quad Q(t):=\left(\begin{array}{cc}
		 0 & 0 \\
		q(t)-a(t) & 0
	\end{array}\right) \text{ and }  F(t,\xi):=\left(\begin{array}{c}
		0 \\
		\widehat{f}(t,\xi)
	\end{array}\right).
\end{equation}	
This allows us to reformulate the given second order system \eqref{dispde0} as the first order system
\begin{equation}
	\left\{\begin{array}{l}
		\partial_{t}U(t,\xi)=i \langle\xi\rangle A(t) U(t,\xi)+i\langle\xi\rangle^{-1}Q(t) U(t,\xi)+F(t,\xi),\quad(t, \xi) \in(0, T] \times \mathcal{I}_{\hbar}, \\
		U(0,\xi)=U_{0}(\xi),\quad\xi \in \mathcal{I}_{\hbar}.
	\end{array}\right.
\end{equation}
We observe that the eigenvalues of the matrix $A(t)$ are given by $\pm \sqrt{a(t)}$.  The  symmetriser $S$ of matrix $A$ is given by 
\begin{equation}\label{sdef}
	S(t)=\left(\begin{array}{cc}
		 a(t) & 0\\
		0 & 1
	\end{array}\right),
\end{equation}
i.e., we have \begin{equation}
	SA-A^{*}S=0.
\end{equation}
Consider 
\begin{eqnarray}\label{srest}
	\left(S(t)U(t,\xi),U(t,\xi)\right)&=&a(t)\langle\xi\rangle^{2}\left|\widehat{u}(t,\xi)\right|^{2}+\left|\partial_{t}\widehat{u}(t,\xi)\right|^{2}\nonumber\\
	&\leq&\sup\limits_{t \in[0, T]}\{a(t),1\}\left(\langle\xi\rangle^{2}\left|\widehat{u}(t,\xi)\right|^{2}+\left|\partial_{t}\widehat{u}(t,\xi)\right|^{2}\right)\nonumber\\
	&=&\sup\limits_{t \in[0, T]}\{a(t),1\}\left|U(t,\xi)\right|^{2},
\end{eqnarray}
where $(\cdot, \cdot)$, and $|\cdot|$  denote the inner product and the norm in  $\mathbb{C}$, respectively. Similarly
\begin{equation}\label{slest}
	\left(S(t)U(t,\xi),U(t,\xi)\right)\geq\inf\limits_{t \in[0, T]}\{a(t),1\}\left|U(t,\xi)\right|^{2}.
\end{equation}
If we now define the energy
\begin{equation}
	E(t,\xi):=(S(t) U(t,\xi), U(t,\xi)),
\end{equation}
then from \eqref{srest}, and \eqref{slest}, it follows that
\begin{equation}\label{est}
	\inf_{t \in[0, T]}\{a(t), 1\}|U(t,\xi)|^{2} \leq E(t,\xi) \leq \sup _{t \in[0, T]}\{a(t), 1\}|U(t,\xi)|^{2} .
\end{equation}
Since $a\in L_{1}^{\infty}([0,T])$, there exist two positive constants $a_{0}$ and $a_{1}$ such that
\begin{equation}
\inf\limits_{t \in[0, T]} a(t)=a_{0}\quad \text{and } \sup\limits_{t \in[0, T]} a(t)=a_{1}. 
\end{equation}
 Further,
if we set $c_{0}=\min \left\{a_{0}, 1\right\}$ and $c_{1}=\max \left\{a_{1}, 1\right\}$, then the inequality \eqref{est} becomes
\begin{equation}\label{vEest}
	c_{0}|U(t,\xi)|^{2} \leq E(t,\xi) \leq c_{1}|U(t,\xi)|^{2},
\end{equation}
for all $t\in[0,T]$ and $\xi\in\mathcal{I}_{\hbar}$.
Then we can calculate
\begin{eqnarray}\label{vdest}
	E_{t}(t,\xi)&=&\left( S_{t}(t) U(t,\xi), U(t,\xi)\right)+\left(S(t)  U_{t}(t,\xi), U(t,\xi)\right) +\left(S(t) U(t,\xi),  U_{t}(t,\xi)\right) \nonumber\\
	&=&\left(S_{t}(t) U(t,\xi), U(t,\xi)\right)+i \langle\xi\rangle(S(t) A(t) U(t,\xi), U(t,\xi))+ \nonumber\\
	&&i\langle\xi\rangle^{-1}(S(t) Q(t) U(t,\xi), U(t,\xi))+\left(S(t) F(t,\xi),  U(t,\xi)\right)-\nonumber\\
	&&i \langle\xi\rangle(S(t) U(t,\xi), A(t) U(t,\xi)) -i\langle\xi\rangle^{-1}(S(t)  U(t,\xi), Q(t)U(t,\xi))+\nonumber\\
	&&\left(S(t), F(t,\xi) U(t,\xi)\right)\nonumber\\
	&=&\left(S_{t}(t) U(t,\xi), U(t,\xi)\right)+i \langle\xi\rangle\left(\left(S A-A^{*}S\right)(t) U(t,\xi), U(t,\xi)\right) +\nonumber\\
	&& i\langle\xi\rangle^{-1}\left(\left(S Q-Q^{*} S\right)(t) U(t,\xi), U(t,\xi)\right)+2\text{Re}\left(S(t)F(t,\xi), U(t,\xi)\right)\nonumber\\
	&=&\left(S_{t}(t) U(t,\xi), U(t,\xi)\right)+i\langle\xi\rangle^{-1}\left(\left(S Q-Q^{*} S\right)(t) U(t,\xi), U(t,\xi)\right)+ \nonumber\\
	&& 2\text{Re}\left(S(t)F(t,\xi), U(t,\xi)\right).
\end{eqnarray} 
From the definition of $S$ and $Q$, we have 
\begin{equation}
	S_{t}(t):=\left(\begin{array}{cc}
		 a^{\prime}(t) &0\\
		  0 & 0
	\end{array}\right)\text{ and }		\left(SQ-Q^{*}S\right)(t):=\left(\begin{array}{cc}
		0 & a(t)-q(t) \\
		q(t)-a(t) & 0
	\end{array}\right),
\end{equation}	
whence we get
\begin{equation}\label{sqst}
	\|S_{t}(t)\|\leq |a^{\prime}(t)|\quad\text{and}\quad \|\left(SQ-Q^{*}S\right)(t)\|\leq|q(t)|+|a(t)|,\quad \text{for all }t\in[0,T].
\end{equation}
Moreover, it is also obvious to observe that
\begin{equation}\label{snorm}
	\|S(t)\|\leq (1+|a(t)|),\quad \text{for all } t\in[0,T], 
\end{equation}
where $\|\cdot\|$ is the max norm. 
Combining the estimates \eqref{vEest}-\eqref{snorm} with the hypothesis $a\in L_{1}^{\infty}([0,T])$ and $q\in L^{\infty}([0,T])$, we get
\begin{eqnarray}\label{vetest}
	E_{t}(t,\xi)&\leq& \left\|S_{t}(t)\right\||U(t,\xi)|^{2}+\|\left(S Q-Q^{*}S\right)(t)\||U(t,\xi)|^{2}+2\|S(t)\||F(t,\xi)||U(t,\xi)|\nonumber\\
	&\leq&\left(\left\|S_{t}(t)\right\|+\|\left(S Q-Q^{*}S\right)(t)\|+\|S(t)\|\right)|U(t,\xi)|^{2}+\|S(t)\||F(t,\xi)|^{2}\nonumber\\	&\leq&\left(1+|a^{\prime}(t)|+|q(t)|+2|a(t)|\right)|U(t,\xi)|^{2}+(1+|a(t)|)|F(t,\xi)|^{2}\nonumber\\
	&\leq&\left(1+\|a^{\prime}\|_{L^{\infty}}+\|q\|_{L^{\infty}}+2\left\|a\right\|_{L^{\infty}}\right)|U(t,\xi)|^{2}+\left(1+\left\|a\right\|_{L^{\infty}}\right)|F(t,\xi)|^{2}\nonumber\\	&\leq&c_{0}^{-1}\left(1+\|a^{\prime}\|_{L^{\infty}}+\|q\|_{L^{\infty}}+2\left\|a\right\|_{L^{\infty}}\right)E(t,\xi)+\left(1+\left\|a\right\|_{L^{\infty}}\right)|F(t,\xi)|^{2}.\nonumber\\
\end{eqnarray}
If we set $\kappa_{1}=c_{0}^{-1}\left(1+\|a^{\prime}\|_{L^{\infty}}+\|q\|_{L^{\infty}}+2\left\|a\right\|_{L^{\infty}}\right)$ and $\kappa_{2}=1+\|a\|_{L^{\infty}}$, then  we get
\begin{equation}\label{vgrnineq}
	E_{t}(t,\xi)\leq\kappa_{1} E(t,\xi)+\kappa_{2}|F(t,\xi)|^{2}.
\end{equation}
By  using the  Gronwall's  lemma to the inequality \eqref{vgrnineq}, we deduce that  
\begin{equation}\label{vfest}
	E(t,\xi)\leq e^{\int_{0}^{t}\kappa_{1}\mathrm{d}\tau}\left(E(0,\xi)+\int_{0}^{t}\kappa_{2}|F(\tau,\xi)|^{2}\mathrm{d}\tau\right),
\end{equation}
for all $t\in[0,T]$ and $\xi\in\mathcal{I}_{\hbar}$.
Therefore by putting together   \eqref{vEest} and \eqref{vfest}, we obtain
\begin{multline}
	c_{0}|U(t,\xi)|^{2}\leq 	E(t,\xi)\leq e^{\int_{0}^{t}\kappa_{1}\mathrm{d}\tau}\left(E(0,\xi)+\int_{0}^{t}\kappa_{2}|F(\tau,\xi)|^{2}\mathrm{d}\tau\right)\\ \leq	 e^{\kappa_{1}T}\left(c_1|U(0,\xi)|^{2}+\kappa_{2}\int_{0}^{T}|F(\tau,\xi)|^{2}\mathrm{d}\tau\right).
\end{multline} 
This gives
\begin{equation}
	|U(t,\xi)|^{2}\leq C_{T}\left(|U(0,\xi)|^{2}+\int_{0}^{T}|F(\tau,\xi)|^{2}\mathrm{d}\tau\right),\quad (t,\xi)\in [0,T]\times\mathcal{I}_{\hbar},
\end{equation}
where $C_{T}=c_{0}^{-1}e^{\kappa_{1}T}\max\{c_{1},\kappa_{2}\}$. Then using the definition of $U$ and $F$, we obtain the inequality
\begin{equation}\label{mest}
	\langle\xi\rangle^{2}\left|\widehat{u}(t,\xi)\right|^{2}+\left|\partial_{t}\widehat{u}(t,\xi)\right|^{2} \leq C_{T}\left(\langle\xi\rangle^{2}\left|\widehat{u}_{0}(\xi)\right|^{2}+\left|\widehat{u}_{1}(\xi)\right|^{2}+\int_{0}^{T}|F(\tau,\xi)|^{2}\mathrm{d}\tau\right),
\end{equation}
with the constant independent of $t\in[0,T]$ and $\xi\in\mathcal{I}_{\hbar}$. More generally, multiplying \eqref{mest}  by powers of $\langle\xi\rangle$, for any $s\in\mathbb{R}$, we get
\begin{multline}\label{mmest}
\langle\xi\rangle^{2+2s}\left|\widehat{u}(t,\xi)\right|^{2}+\langle\xi\rangle^{2s}\left|\partial_{t}\widehat{u}(t,\xi)\right|^{2} \\\leq C_{T}\left(\langle\xi\rangle^{2+2s}\left|\widehat{u}_{0}(\xi)\right|^{2}+\langle\xi\rangle^{2s}\left|\widehat{u}_{1}(\xi)\right|^{2}+\langle\xi\rangle^{2s}\int_{0}^{T}|\widehat{f}(\tau,\xi)|^{2}\mathrm{d}\tau\right),
\end{multline}
i.e.,
\begin{multline}\label{mmmest}
	\left(1+\lambda_{\xi}\right)^{1+s}\left|\widehat{u}(t,\xi)\right|^{2}+\left(1+\lambda_{\xi}\right)^{s}\left|\partial_{t}\widehat{u}(t,\xi)\right|^{2} \\\leq C_{T}\left(\left(1+\lambda_{\xi}\right)^{1+s}\left|\widehat{u}_{0}(\xi)\right|^{2}+\left(1+\lambda_{\xi}\right)^{s}\left|\widehat{u}_{1}(\xi)\right|^{2}+\left(1+\lambda_{\xi}\right)^{s}\int_{0}^{T}|\widehat{f}(\tau,\xi)|^{2}\mathrm{d}\tau\right).
\end{multline}
Now, by using the $\mathcal{H}_{\hbar,V}$-Plancherel's formula \eqref{planch} and \eqref{fnorm}, we have
\begin{multline}
	\left\|(I+\mathcal{H}_{\hbar,V})^{\frac{1+s}{2}}u(t,\cdot)\right\|^{2}_{\ell^{2}(\hbar\mathbb{Z}^{n})}+\left\|(I+\mathcal{H}_{\hbar,V})^{\frac{s}{2}}u_{t}(t,\cdot)\right\|^{2}_{\ell^{2}(\hbar\mathbb{Z}^{n})}\\\leq		
	C_{T}\left(\left\|(I+\mathcal{H}_{\hbar,V})^{\frac{1+s}{2}}u_{0}\right\|^{2}_{\ell^{2}(\hbar\mathbb{Z}^{n})}+\left\|(I+\mathcal{H}_{\hbar,V})^{\frac{s}{2}}u_{1}\right\|^{2}_{\ell^{2}(\hbar\mathbb{Z}^{n})}+\right.\\
	\left.\left\|(I+\mathcal{H}_{\hbar,V})^{\frac{s}{2}}f\right\|^{2}_{L^{2}([0,T];\ell^{2}(\hbar\mathbb{Z}^{n}))}\right),
\end{multline}
whence we get
\begin{equation}
	\|u(t,\cdot)\|^{2}_{\mathrm{H}_{\mathcal{H}_{\hbar,V}}^{1+s}}+\|u_{t}(t,\cdot)\|^{2}_{\mathrm{H}_{\mathcal{H}_{\hbar,V}}^{s}}\leq		C_{T}\left(\|u_{0}\|^{2}_{\mathrm{H}_{\mathcal{H}_{\hbar,V}}^{1+s}}+\|u_{1}\|^{2}_{\mathrm{H}_{\mathcal{H}_{\hbar,V}}^{s}}+\|f\|^{2}_{L^{2}([0,T];\mathrm{H}_{\mathcal{H}_{\hbar,V}}^{s})}\right),
\end{equation}
for all $t\in[0,T]$, where the constant $C_{T}$ is given by
 \begin{equation}
 C_{T}=c_{0}^{-1}(1+\left\|a\right\|_{L^{\infty}})e^{c_{0}^{-1}\left(1+\|a^{\prime}\|_{L^{\infty}}+\|q\|_{L^{\infty}}+2\left\|a\right\|_{L^{\infty}}\right)T}.
 \end{equation}
 This completes the proof.
\end{proof}
Thus, we have obtained the well-posedness for the Cauchy problem \eqref{dispde1} in the Sobolev spaces associated with the discrete Schr\"{o}dinger operator. We will now prove the existence of very weak solution in the case of distributional coefficients.
\begin{proof}[Proof of Theorem \ref{ext}]
	Consider the regularised Cauchy problem
		\begin{equation}\label{vwprf}
		\left\{\begin{array}{l}
			\partial_{t}^{2} u_{\varepsilon}(t, k)+a_{\varepsilon}(t)\mathcal{H}_{\hbar,V}u_{\varepsilon}(t, k)+q_{\varepsilon}(t) u_{\varepsilon}(t, k)=f_{\varepsilon}(t, k),\quad(t, k) \in(0, T] \times \hbar\mathbb{Z}^{n}, \\
			u_{\varepsilon}(0, k)=u_{0}(k),\quad k \in \hbar\mathbb{Z}^{n}, \\
			\partial_{t} u_{\varepsilon}(0, k)=u_{1}(k),\quad k \in \hbar\mathbb{Z}^{n}.
		\end{array}\right.
	\end{equation}
 Taking the Fourier transform  with respect to $k\in\hbar\mathbb{Z}^{n}$ and then using the  transformation similar to Theorem \ref{class}, the Cauchy problem \eqref{vwprf} reduces to
	\begin{equation}
		\left\{\begin{array}{l}
			\partial_{t} U_{\varepsilon}(t,\xi)=i \langle\xi\rangle A_{\varepsilon}(t) U_{\varepsilon}(t,\xi)+i\langle\xi\rangle^{-1}Q_{\varepsilon}(t) U_{\varepsilon}(t,\xi)+F_{\varepsilon}(t,\xi),\quad \xi\in \mathcal{I}_{\hbar}, \\
			U_{\varepsilon}(0,\xi)=U_{0}(\xi),\quad \xi\in\mathcal{I}_{\hbar},
		\end{array}\right.
	\end{equation}
where
\begin{equation}
	U_{\varepsilon}(t,\xi):=\left(\begin{array}{c}
		i\langle\xi\rangle \widehat{u}_{\varepsilon}(t,\xi) \\
		\partial_{t}\widehat{u}_{\varepsilon}(t,\xi)
	\end{array}\right), \quad U_{0}(\xi):=\left(\begin{array}{c}
		i \langle\xi\rangle\widehat{u}_{0}(\xi) \\
		\widehat{u}_{1}(\xi)
	\end{array}\right),		
\end{equation}
and the matrices
\begin{equation}\label{aepqep}
	A_{\varepsilon}(t):=\left(\begin{array}{cc}
		 0 & 1\\
		 a_{\varepsilon}(t) & 0
	\end{array}\right),\text{ } Q_{\varepsilon}(t):=\left(\begin{array}{cc}
		 0 &0\\
		q_{\varepsilon}(t)-a_{\varepsilon}(t) & 0 
	\end{array}\right), \text{ } F_{\varepsilon}(t,\xi):=\left(\begin{array}{c}
		0 \\
		\widehat{f}_{\varepsilon}(t,\xi)
	\end{array}\right).
\end{equation}	
	We note that the eigenvalues of  $A_{\varepsilon}(t)$ are given by $\pm \sqrt{a_{\varepsilon}(t)}$. The  symmetriser $S_{\varepsilon}$  of $A_{\varepsilon}$ is given by 
	\begin{equation}\label{sepl}
		S_{\varepsilon}(t)=\left(\begin{array}{cc}	
			  a_{\varepsilon}(t) & 0\\
		0 & 1
		\end{array}\right),
	\end{equation}
	i.e., we have \begin{equation}
		S_{\varepsilon}A_{\varepsilon}-A_{\varepsilon}^{*}S_{\varepsilon}=0.
	\end{equation}
	If we now define the energy
	\begin{equation}
		E_{\varepsilon}(t,\xi):=(S_{\varepsilon}(t) U_{\varepsilon}(t,\xi), U_{\varepsilon}(t,\xi)),
	\end{equation}
	then similar to \eqref{est}, we have
	\begin{equation}
		\inf_{t \in[0, T]}\{a_{\varepsilon}(t), 1\}\left|U_{\varepsilon}(t,\xi)\right|^{2} \leq E_{\varepsilon}(t,\xi) \leq \sup_{t \in[0, T]}\{a_{\varepsilon}(t), 1\}\left|U_{\varepsilon}(t,\xi)\right|^{2} .
	\end{equation}
Recall that $a$ and $q$ are  distributions with  supports contained in $[0,T]$ and $\psi\in C_{0}^{\infty}(\mathbb{R}),\psi\geq 0$, $\text{supp}(\psi)\subseteq K.$ Given that the distributions $a$ and $q$ may be considered as supported in the interval $[0,T]$,  it is sufficient to assume $K=[0,T]$ throughout the article. By the structure theorem for compactly supported distributions, there exist $L_{1}, L_{2} \in \mathbb{N}$ and $c_{1}, c_{2}>0$ such that
\begin{equation}\label{std}
	\left|\partial_{t}^{k} a_{\varepsilon}(t)\right| \leq c_{1} \omega(\varepsilon)^{-L_{1}-k}\quad\text{and} \quad\left|\partial_{t}^{k} q_{\varepsilon}(t)\right| \leq c_{2} \omega(\varepsilon)^{-L_{2}-k},
\end{equation}
for all $k \in \mathbb{N}_{0}$ and $t \in[0, T]$. Since $a\geq a_{0}>0$ therefore we can write
\begin{equation}\label{amin}
	a_{\varepsilon}(t)=\left(a*\psi_{\omega(\varepsilon)}\right)(t)=\langle a,\tau_{t}\tilde{\psi}_{\omega(\varepsilon)}\rangle\geq\tilde{a}_{0}>0,
\end{equation}
where $\tilde{\psi}(x)=\psi(-x),x\in\mathbb{R}$ and $\tau_{t}\psi(\xi)=\psi(\xi-t),\xi\in\mathbb{R}$.
	Combining the inequalities \eqref{std} and \eqref{amin},  there exist two positive constants $c_{0}$ and $c_{1}$ such that
	\begin{equation}\label{vvEest}
		c_{0}\left|U_{\varepsilon}(t,\xi)\right|^{2} \leq E_{\varepsilon}(t,\xi) \leq \left(1+c_{1}\omega(\varepsilon)^{-L_{1}}\right)\left|U_{\varepsilon}(t,\xi)\right|^{2}.
	\end{equation}
Then we can calculate
	\begin{eqnarray}\label{vvdest}
		\partial_{t}E_{\varepsilon}(t,\xi)&=&
			\left(\partial_{t}S_{\varepsilon}(t) U_{\varepsilon}(t,\xi), U_{\varepsilon}(t,\xi)\right)+i \langle\xi\rangle\left(\left(S_{\varepsilon} A_{\varepsilon}-A^{*}_{\varepsilon} S_{\varepsilon}\right)(t) U_{\varepsilon}(t,\xi), U_{\varepsilon}(t,\xi)\right)+ \nonumber\\
			&&i\langle\xi\rangle^{-1}\left(\left(S_{\varepsilon} Q_{\varepsilon}-Q^{*}_{\varepsilon} S_{\varepsilon}\right)(t) U_{\varepsilon}(t,\xi), U_{\varepsilon}(t,\xi)\right)+2 \operatorname{Re}(S_{\varepsilon}(t) F_{\varepsilon}(t,\xi), U_{\varepsilon}(t,\xi))\nonumber\\
			&=&
			\left(\partial_{t}S_{\varepsilon}(t) U_{\varepsilon}(t,\xi), U_{\varepsilon}(t,\xi)\right)+i\langle\xi\rangle^{-1}\left(\left(S_{\varepsilon} Q_{\varepsilon}-Q^{*}_{\varepsilon} S_{\varepsilon}\right)(t) U_{\varepsilon}(t,\xi), U_{\varepsilon}(t,\xi)\right)+\nonumber\\&&
			2 \operatorname{Re}(S_{\varepsilon}(t) F_{\varepsilon}(t, \xi), U_{\varepsilon}(t,\xi))\nonumber\\ 
		&\leq&\left(\left\|\partial_{t}S_{\varepsilon}(t)\right\|+\|\left(S_{\varepsilon} Q_{\varepsilon}-Q^{*}_{\varepsilon}S_{\varepsilon}\right)(t)\|+\|S_{\varepsilon}(t)\|\right)|U_{\varepsilon}(t,\xi)|^{2}+\nonumber\\
		&&\|S_{\varepsilon}(t)\||F_{\varepsilon}(t,\xi)|^{2}\nonumber\\
		&\leq& \left(1+|a_{\varepsilon}^{\prime}(t)|+|q_{\varepsilon}(t)|+2|a_{\varepsilon}(t)|\right)|U_{\varepsilon}(t,\xi)|^{2}+\left(1+|a_{\varepsilon}(t)|\right)|F_{\varepsilon}(t,\xi)|^{2}.\nonumber\\
	\end{eqnarray}
 Combining the above estimates with \eqref{std} and \eqref{vvEest}, we obtain
\begin{eqnarray}\label{gnwprest}
	\partial_{t}E_{\varepsilon}(t,\xi)&\leq& c_{0}^{-1}\left(1+c_{1}\omega(\varepsilon)^{-L_{1}-1}+c_{2}\omega(\varepsilon)^{-L_{2}}+2c_{1}\omega(\varepsilon)^{-L_{1}}\right)E_{\varepsilon}(t,\xi)+\nonumber\\&&\left(1+c_{1}\omega(\varepsilon)^{-L_{1}}\right)|F_{\varepsilon}(t,\xi)|^{2}\nonumber\\
	&=&\kappa_{1}\left(1+\omega(\varepsilon)^{-L_{1}-1}+\omega(\varepsilon)^{-L_{2}}+\omega(\varepsilon)^{-L_{1}}\right)E_{\varepsilon}(t,\xi)+\nonumber\\&&\kappa_{2}\left(1+\omega(\varepsilon)^{-L_{1}}\right)|F_{\varepsilon}(t,\xi)|^{2},
\end{eqnarray}
where $\kappa_{1}=c_{0}^{-1}\max\{1,2c_{1},c_{2}\}$ and $\kappa_{2}=\max\{1,c_{1}\}$. Applying the Gronwall's lemma to the inequality \eqref{gnwprest}, we obtain
\begin{multline}\label{gnvest}
E_{\varepsilon}(t,\xi)\leq e^{\int_{0}^{t}\kappa_{1}\left(1+\omega(\varepsilon)^{-L_{1}-1}+\omega(\varepsilon)^{-L_{2}}+\omega(\varepsilon)^{-L_{1}}\right)\mathrm{d}\tau}\times\\
\left(E_{\varepsilon}(0,\xi)+\kappa_{2}\left(1+\omega(\varepsilon)^{-L_{1}}\right)\int_{0}^{t}|F_{\varepsilon}(\tau,\xi)|^{2}\mathrm{d}\tau\right).
\end{multline}
 Combining the inequalities \eqref{vvEest} and \eqref{gnvest}, we obtain
\begin{eqnarray}
	c_{0}\left|U_{\varepsilon}(t,\xi)\right|^{2}&\leq&
E_{\varepsilon}(t,\xi)\nonumber\\
&\leq&  e^{\kappa_{1}\left(1+\omega(\varepsilon)^{-L_{1}-1}+\omega(\varepsilon)^{-L_{2}}+\omega(\varepsilon)^{-L_{1}}\right)T}\times\nonumber\\
&&\left(\left(1+c_{1}\omega(\varepsilon)^{-L_{1}}\right)\left|U_{\varepsilon}(0,\xi)\right|^{2}+\kappa_{2}\left(1+\omega(\varepsilon)^{-L_{1}}\right)\int_{0}^{T}|F_{\varepsilon}(\tau,\xi)|^{2}\mathrm{d}\tau\right)\nonumber\\
&\leq&C_{T}e^{\kappa_{T}\left(\omega(\varepsilon)^{-L_{1}-1}+\omega(\varepsilon)^{-L_{2}}+\omega(\varepsilon)^{-L_{1}}\right)}\left(\left|U_{\varepsilon}(0,\xi)\right|^{2}+\int_{0}^{T}|F_{\varepsilon}(\tau,\xi)|^{2}\mathrm{d}\tau\right),\nonumber\\
\end{eqnarray}
where $C_T=e^{k_{1}T}\max\{1,c_{1},\kappa_{2}\}$ and $\kappa_{T}=2+2k_{1}T$. This gives
\begin{multline}\label{vineq}
	|U_{\varepsilon}(t,\xi)|^{2}\leq c_{0}^{-1}C_{T}e^{\kappa_{T}\left(\omega(\varepsilon)^{-L_{1}-1}+\omega(\varepsilon)^{-L_{2}}+\omega(\varepsilon)^{-L_{1}}\right)}\left(\left|U_{\varepsilon}(0,\xi)\right|^{2}+\int_{0}^{T}|F_{\varepsilon}(\tau,\xi)|^{2}\mathrm{d}\tau\right).
\end{multline}
Putting $\omega(\varepsilon)\sim|\log(\varepsilon)|^{-1}$ and recalling the definition of $U_{\varepsilon}$, we get
\begin{multline}\label{modest}
	\langle\xi\rangle^{2}|\widehat{u}_{\varepsilon}(t,\xi)|^{2}+|\partial_{t}\widehat{u}_{\varepsilon}(t,\xi)|^{2}\\\lesssim \varepsilon^{-2L_{1}-L_{2}-1}\left(\langle\xi\rangle^{2}|\widehat{u}_{0}(\xi)|^{2}+|\widehat{u}_{1}(\xi)|^{2}+\int_{0}^{T}|\widehat{f}_{\varepsilon}(\tau,\xi)|^{2}\mathrm{d}\tau\right).
\end{multline}
 Multiplying  by powers of $\langle\xi\rangle$ for any $s\in\mathbb{R}$ and using the $\mathcal{H}_{\hbar,V}$-Plancherel formula, we obtain
\begin{multline}\label{uuestt}
	\|u_{\varepsilon}(t,\cdot)\|^{2}_{\mathrm{H}_{\mathcal{H}_{\hbar,V}}^{1+s}}+\|\partial_{t}u_{\varepsilon}(t,\cdot)\|^{2}_{\mathrm{H}_{\mathcal{H}_{\hbar,V}}^{s}}\\\lesssim		\varepsilon^{-2L_{1}-L_{2}-1}\left(\|u_{0}\|^{2}_{\mathrm{H}_{\mathcal{H}_{\hbar,V}}^{1+s}}+\|u_{1}\|^{2}_{\mathrm{H}_{\mathcal{H}_{\hbar,V}}^{s}}+\|f_{\varepsilon}\|^{2}_{L^{2}([0,T];\mathrm{H}_{\mathcal{H}_{\hbar,V}}^{s})}\right),
\end{multline}
with the constant independent of $\hbar$ and $t\in[0,T].$ Since $(f_{\varepsilon})_{\varepsilon}$ is $L^{2}([0, T] ; \mathrm{H}_{\mathcal{H}_{\hbar,V}}^{s})$-moderate regularisation  of $f$, therefore there exist positive constants $L_{3}$ and $c>0$ such that
\begin{equation}\label{fl2}
	\|f_{\varepsilon}\|_{L^{2}([0,T];\mathrm{H}_{\mathcal{H}_{\hbar,V}}^{s})}\leq c\varepsilon^{-L_{3}}.
\end{equation}
On integrating the estimate \eqref{uuestt} with respect to the variable $t\in[0,T]$ and then 
 combining  together with \eqref{fl2}, we obtain
 \begin{equation}
 	\left\|u_{\varepsilon}\right\|_{L^{2}([0,T];\mathrm{H}_{\mathcal{H}_{\hbar,V}}^{1+s})} \lesssim \varepsilon^{-L_1-L_2-L_{3}}\text{ and } \left\|\partial_t u_{\varepsilon}\right\|_{L^{2}([0,T];\mathrm{H}_{\mathcal{H}_{\hbar,V}}^{s})} \lesssim \varepsilon^{-L_1-L_2-L_{3}-1}.
 \end{equation}
 Therefore, we deduce that 	$u_{\varepsilon} \text { is } L^{2}([0, T] ; \mathrm{H}_{\mathcal{H}_{\hbar,V}}^{1+s})$-moderate. This concludes the proof.
\end{proof}
Thus, we have proved the existence of very weak solution for the Cauchy problem \eqref{dispde1}. We will now prove the uniqueness of very weak solution in the sense of Definition \ref{uniquedef}.
\begin{proof}[Proof of Theorem \ref{uniq}]
Let $(u_{\varepsilon})_{\varepsilon}$	 and $(\tilde{u}_{\varepsilon})_{\varepsilon}$	 be the families of solutions corresponding to the Cauchy problems \eqref{reg1} and \eqref{reg2}, respectively. 
Denoting $w_{\varepsilon}(t,k):=u_{\varepsilon}(t,k)-\tilde{u}_{\varepsilon}(t,k)$, we get
\begin{equation}\label{weqn}
	\left\{\begin{array}{l}
		\partial_{t}^{2} w_{\varepsilon}(t, k)+a_{\varepsilon}(t)\mathcal{H}_{\hbar,V}w_{\varepsilon}(t, k)+q_{\varepsilon}(t)w_{\varepsilon}(t, k)=g_{\varepsilon}(t, k),\quad(t, k) \in(0, T] \times \hbar\mathbb{Z}^{n}, \\
		w_{\varepsilon}(0, k)=0,\quad k \in \hbar\mathbb{Z}^{n}, \\
		\partial_{t} w_{\varepsilon}(0, k)=0,\quad k \in \hbar\mathbb{Z}^{n},
	\end{array}\right.
\end{equation}
where 
\begin{equation}
	g_{\varepsilon}(t,k):=\left(\tilde{a}_{\varepsilon}-a_{\varepsilon}\right)(t) \mathcal{H}_{\hbar,V} \tilde{u}_{\varepsilon}(t, k)+\left(\tilde{q}_{\varepsilon}- q_{\varepsilon}\right)(t) \tilde{u}_{\varepsilon}(t, k)+(f_{\varepsilon}-\tilde{f}_{\varepsilon})(t,k).
\end{equation}
Since  $(a_{\varepsilon}-\tilde{a}_{\varepsilon})_{\varepsilon}$ is $L_{1}^{\infty}$-negligible,  $(q_{\varepsilon}-\tilde{q}_{\varepsilon})_{\varepsilon}$ is $L^{\infty}$-negligible, and $(f_{\varepsilon}-\tilde{f}_{\varepsilon})_{\varepsilon}$ is $L^{2}([0,T];\mathrm{H}^{s}_{\mathcal{H}_{\hbar,V}})$-negligible, it follows that $g_{\varepsilon}$ is $L^{2}([0,T];\mathrm{H}^{s}_{\mathcal{H}_{\hbar,V}})$-negligible.
 Taking the Fourier transform  with respect to $k\in\hbar\mathbb{Z}^{n}$ and then using the  transformation similar to Theorem \ref{ext}, the Cauchy problem \eqref{weqn} reduces to
\begin{equation}
	\left\{\begin{array}{l}
		W_{\varepsilon}^{\prime}(t,\xi)=i \langle\xi\rangle A_{\varepsilon}(t) W_{\varepsilon}(t,\xi)+i\langle\xi\rangle^{-1}Q_{\varepsilon}(t) W_{\varepsilon}(t,\xi)+G_{\varepsilon}(t,\xi),\quad\xi\in \mathcal{I}_{\hbar}, \\
		W_{\varepsilon}(0,\xi)=0,\quad \xi\in\mathcal{I}_{\hbar},
	\end{array}\right.
\end{equation}
where $A_{\varepsilon}(t),Q_{\varepsilon}(t)$ are given in \eqref{aepqep} and $G_{\varepsilon}(t,\xi)=[0,\widehat{g_{\varepsilon}}(t,\xi)]^{\mathrm{T}}$.
If we now define the energy
\begin{equation}
	E_{\varepsilon}(t,\xi):=(S_{\varepsilon}(t) W_{\varepsilon}(t,\xi), W_{\varepsilon}(t,\xi)),
\end{equation}
where $S_{\varepsilon}(t)$ is given by \eqref{sepl}, then similar to estimate \eqref{vvEest}, we have
\begin{equation}\label{wwEest}
	c_{0}\left|W_{\varepsilon}(t,\xi)\right|^{2} \leq E_{\varepsilon}(t,\xi) \leq \left(1+c_{1}\omega(\varepsilon)^{-L_{1}}\right)\left|W_{\varepsilon}(t,\xi)\right|^{2},
\end{equation} 
where $c_{0}$ and $c_{1}$ are positive constants.
Then we can calculate  
\begin{eqnarray}\label{etest}
	\partial_{t} E_{\varepsilon}(t, \xi)&\leq&\left(\left\|\partial_{t}S_{\varepsilon}(t)\right\|+\|\left(S_{\varepsilon} Q_{\varepsilon}-Q^{*}_{\varepsilon}S_{\varepsilon}\right)(t)\|\right)|W_{\xi}(t,\xi)|^{2}+\nonumber\\&&2\|S_{\varepsilon}(t)\||G_{\varepsilon}(t,\xi)||W_{\varepsilon}(t,\xi)|\nonumber\\
	 & \leq&\left(\left\|\partial_{t} S_{\varepsilon}(t)\right\|+\|\left(S_{\varepsilon} Q_{\varepsilon}-Q^{*}_{\varepsilon}S_{\varepsilon}\right)(t)\|+\left\|S_{\varepsilon}(t)\right\|\right) \left| W_{\varepsilon}(t, \xi)\right|^{2}+ \nonumber\\
	&&\left\|S_{\varepsilon}(t)\right\|\left|G_{\varepsilon}(t, \xi)\right|^{2}\nonumber\\
	&\leq&\left(1+|\partial_{t}a_{\varepsilon}(t)|+|q_{\varepsilon}(t)|+2|a_{\varepsilon}(t)|\right)\left| W_{\varepsilon}(t, \xi)\right|^{2}+(1+|a_{\varepsilon}(t)|)\left|G_{\varepsilon}(t, \xi)\right|^{2}.\nonumber\\
\end{eqnarray}
 Combining \eqref{std} and \eqref{wwEest} together with \eqref{etest}, and then using the Gronwall's lemma, we obtain the following estimate
\begin{multline}\label{pgron}
	|W_{\varepsilon}(t,\xi)|^{2}\leq c_{0}^{-1}C_{T}e^{\kappa_{T}\left(\omega(\varepsilon)^{-L_{1}-1}+\omega(\varepsilon)^{-L_{2}}+\omega(\varepsilon)^{-L_{1}}\right)}\left(\left|W_{\varepsilon}(0,\xi)\right|^{2}+\int_{0}^{T}|G_{\varepsilon}(\tau,\xi)|^{2}\mathrm{d}\tau\right),
\end{multline}
where constants are similar to estimate \eqref{vineq}.
Putting $\omega(\varepsilon)\sim|\log(\varepsilon)|^{-1}$ and using the fact that  $W_{\varepsilon}(0,\xi)\equiv 0$ for all $\varepsilon\in(0,1]$, we get
\begin{equation}
	|W_{\varepsilon}(t,\xi)|^{2}\lesssim 		\varepsilon^{-2L_{1}-L_{2}-1}\int_{0}^{T}|G_{\varepsilon}(\tau,\xi)|^{2}\mathrm{d}\tau,\quad (t,\xi)\in[0,T]\times\mathcal{I}_{\hbar},
\end{equation}
with the constant independent of $t\in[0,T]$ and $\xi\in\mathcal{I}_{\hbar}$. Recalling the definition of $W_{\varepsilon}$, we get
\begin{equation}\label{modest}
	\langle\xi\rangle^{2}|\widehat{w}_{\varepsilon}(t,\xi)|^{2}+|\partial_{t}\widehat{w}_{\varepsilon}(t,\xi)|^{2}\lesssim 		\varepsilon^{-2L_{1}-L_{2}-1}\int_{0}^{T}|\widehat{g}_{\varepsilon}(\tau,\xi)|^{2}\mathrm{d}\tau.
\end{equation}
 Multiplying  by powers of $\langle\xi\rangle$ for any $s\in\mathbb{R}$ and using the $\mathcal{H}_{\hbar,V}$-Plancherel formula, we obtain
 \begin{equation}
 	\|w_{\varepsilon}(t,\cdot)\|^{2}_{\mathrm{H}_{\mathcal{H}_{\hbar,V}}^{1+s}}+\|\partial_{t}w_{\varepsilon}(t,\cdot)\|^{2}_{\mathrm{H}_{\mathcal{H}_{\hbar,V}}^{s}}\lesssim 		\varepsilon^{-2L_{1}-L_{2}-1}\|g_{\varepsilon}\|^{2}_{L^{2}([0,T];\mathrm{H}_{\mathcal{H}_{\hbar,V}}^{s})},
 \end{equation}
for all $t\in[0,T]$. Since
   $g_{\varepsilon}$ is $L^{2}([0,T];\mathrm{H}^{s}_{\mathcal{H}_{\hbar,V}})$-negligible, therefore we obtain 
\begin{equation}
	\|w_{\varepsilon}(t,\cdot)\|^{2}_{\mathrm{H}_{\mathcal{H}_{\hbar,V}}^{1+s}}+\|\partial_{t}w_{\varepsilon}(t,\cdot)\|^{2}_{\mathrm{H}_{\mathcal{H}_{\hbar,V}}^{s}}\lesssim 		\varepsilon^{-2L_{1}-L_{2}-1}\varepsilon^{2L_{1}+L_{2}+1+q}=\varepsilon^{q},\quad \text{ for all }q\in\mathbb{N}_{0},
\end{equation}
for all $t\in [0,T]$.
On integrating the above estimate with respect to the variable $t\in[0,T]$, we obtain
\begin{equation}
	\|w_{\varepsilon}\|^{2}_{L^{2}([0,T];\mathrm{H}_{\mathcal{H}_{\hbar,V}}^{1+s})}+\|\partial_{t}w_{\varepsilon}\|^{2}_{L^{2}([0,T];\mathrm{H}_{\mathcal{H}_{\hbar,V}}^{s})}\lesssim 	\varepsilon^{q},\quad \text{ for all } q\in\mathbb{N}_{0},
\end{equation}
with the constant independent of $\hbar$ and $t\in[0,T]$.  Thus $(u_{\varepsilon}-\tilde{u}_{\varepsilon})_{\varepsilon}$ is  $L^{2}([0,T];\mathrm{H}^{1+s}_{\mathcal{H}_{\hbar,V}})$-negligible. This completes the proof.
\end{proof}
Thus, we have proved that the very weak solution is unique in the sense of Definition \ref{uniquedef}. We will now prove the consistency of very weak solutions.
\begin{proof}[Proof of Theorem \ref{cnst}]
	Let $\tilde{u}$ be the classical solution given by Theorem \ref{class}. By definition, we know that
\begin{equation}\label{cnst1}
	\left\{\begin{array}{l}
		\partial_{t}^{2} \tilde{u}(t, k)+a(t) \mathcal{H}_{\hbar,V}\tilde{u}(t, k)+q(t) \tilde{u}(t, k)=f(t,k),\quad(t, k) \in(0, T] \times \hbar\mathbb{Z}^{n}, \\
		\tilde{u}(0, k)=u_{0}(k),\quad k \in \hbar\mathbb{Z}^{n}, \\
		\partial_{t} \tilde{u}(0, k)=u_{1}(k),\quad k \in \hbar\mathbb{Z}^{n},
	\end{array}\right.
\end{equation} 	
and there exists a net $(u_{\varepsilon})_{\varepsilon}$   such that
\begin{equation}\label{cnst2}
	\left\{\begin{array}{l}
		\partial_{t}^{2} u_{\varepsilon}(t, k)+a_{\varepsilon}(t) \mathcal{H}_{\hbar,V}u_{\varepsilon}(t, k)+q_{\varepsilon}(t) u_{\varepsilon}(t, k)=f_{\varepsilon}(t,k),\quad(t, k) \in(0, T] \times \hbar\mathbb{Z}^{n}, \\
		u_{\varepsilon}(0, k)=u_{0}(k),\quad k \in \hbar\mathbb{Z}^{n}, \\
		\partial_{t} u_{\varepsilon}(0, k)=u_{1}(k),\quad k \in \hbar\mathbb{Z}^{n}.
	\end{array}\right.
\end{equation} 
	Observing that the nets $\left(a_{\varepsilon}-a\right)_{\varepsilon},\left(q_{\varepsilon}-q\right)_{\varepsilon}$ and $\left(f_{\varepsilon}-f\right)_{\varepsilon}$ are converging to  $0$ for $a\in L^{\infty}_{1}([0,T])$, $b\in L^{\infty}([0,T])$ and $f\in L^{2}([0,T];\mathrm{H}^{s}_{\mathcal{H}_{\hbar,V}})$, we can rewrite \eqref{cnst1} as
\begin{equation}\label{cnst3}
	\left\{\begin{array}{l}
		\partial_{t}^{2} \tilde{u}(t, k)+a_{\varepsilon}(t) \mathcal{H}_{\hbar,V}\tilde{u}(t, k)+q_{\varepsilon}(t) \tilde{u}(t, k)=f_{\varepsilon}(t,k)+g_{\varepsilon}(t,k),~~ (t, k) \in(0, T] \times \hbar\mathbb{Z}^{n}, \\
		\tilde{u}(0, k)=u_{0}(k),\quad k \in \hbar\mathbb{Z}^{n}, \\
		\partial_{t} \tilde{u}(0, k)=u_{1}(k),\quad k \in \hbar\mathbb{Z}^{n},
	\end{array}\right.
\end{equation} 	
where $$g_{\varepsilon}(t,k):=\left(a_{\varepsilon}-a\right)(t) \mathcal{H}_{\hbar,V} \tilde{u}(t, k)+\left(q_{\varepsilon}-q\right)(t) \tilde{u}(t, k)+\left(f-f_{\varepsilon}\right)(t,k),$$
  $g_{\varepsilon} \in L^{2}([0, T] ; \mathrm{H}_{\mathcal{H}_{\hbar,V}}^s)$ and  $g_{\varepsilon}\to0$ in $L^{2}([0, T] ; \mathrm{H}_{\mathcal{H}_{\hbar,V}}^s)$ as $\varepsilon \rightarrow 0$. From \eqref{cnst2} and \eqref{cnst3}, we get that  $w_{\varepsilon}(t,k):=\left(\tilde{u}-u_{\varepsilon}\right)(t,k)$  solves the Cauchy problem
\begin{equation}\label{cnst4}
	\left\{\begin{array}{l}
		\partial_{t}^{2} w_{\varepsilon}(t, k)+a_{\varepsilon}(t) \mathcal{H}_{\hbar,V}w_{\varepsilon}(t, k)+q_{\varepsilon}(t) w_{\varepsilon}(t, k)=g_{\varepsilon}(t,k),\quad(t, k) \in(0, T] \times \hbar\mathbb{Z}^{n}, \\
		w_{\varepsilon}(0, k)=0,\quad k \in \hbar\mathbb{Z}^{n}, \\
		\partial_{t} w_{\varepsilon}(0, k)=0,\quad k \in \hbar\mathbb{Z}^{n}.
	\end{array}\right.
\end{equation}
Similar to the proof of Theorem \ref{uniq}, the following energy estimate can be easily obtained
\begin{eqnarray}
	\partial_t E_{\varepsilon}(t, \xi) 
	&\leq&\left(\left\|\partial_{t}S_{\varepsilon}(t)\right\|+\|\left(S_{\varepsilon} Q_{\varepsilon}-Q^{*}_{\varepsilon}S_{\varepsilon}\right)(t)\|+\|S_{\varepsilon}(t)\|\right)|W_{\varepsilon}(t,\xi)|^{2}+\nonumber\\
	&&\|S_{\varepsilon}(t)\||G_{\varepsilon}(t,\xi)|^{2}\nonumber\\
	&\leq &\left(1+|a_{\varepsilon}^{\prime}(t)|+|q_{\varepsilon}(t)|+2|a_{\varepsilon}(t)|\right)|W_{\varepsilon}(t,\xi)|^{2}+\nonumber\\
	&&(1+|a_{\varepsilon}(t)|)|G_{\varepsilon}(t,\xi)|^{2}.
\end{eqnarray}
The coefficients are sufficiently regular, so we simply obtain
\begin{equation}
\partial_t E_{\varepsilon}(t, \xi) \leq c_1E_{\varepsilon}(t, \xi)+c_2\left|G_{\varepsilon}(t, \xi)\right|^{2},
\end{equation}
for some  positive constants $c_{1}$ and $c_{2}$. Then using the Gronwall's lemma and the energy bounds similar to estimate \eqref{vEest}, we obtain
\begin{equation}
	|W_{\varepsilon}(t,\xi)|^{2}\lesssim |W_{\varepsilon}(0,\xi)|^{2}+\int_{0}^{T}|G_{\varepsilon}(\tau,\xi)|^{2}\mathrm{d}\tau,
\end{equation}
with the constant independent of  $t\in[0,T]$ and $\xi\in\mathcal{I}_{\hbar}$. Using the Plancherel formula and the fact that $W_{\varepsilon}(0,\xi)\equiv 0$ for all $\varepsilon\in(0,1]$, we obtain 
\begin{equation}
	\|w_{\varepsilon}(t,\cdot)\|^{2}_{\mathrm{H}_{\mathcal{H}_{\hbar,V}}^{1+s}}+\|\partial_{t}w_{\varepsilon}(t,\cdot)\|^{2}_{\mathrm{H}_{\mathcal{H}_{\hbar,V}}^{s}}\lesssim 	\|g_{\varepsilon}\|^{2}_{L^{2}([0,T];\mathrm{H}_{\mathcal{H}_{\hbar,V}}^{s})},
\end{equation}
for all $t\in [0,T]$.
On integrating the above estimate with respect to the variable $t\in[0,T]$, we obtain
\begin{equation}
	\|w_{\varepsilon}\|^{2}_{L^{2}([0,T];\mathrm{H}_{\mathcal{H}_{\hbar,V}}^{1+s})}+\|\partial_{t}w_{\varepsilon}\|^{2}_{L^{2}([0,T];\mathrm{H}_{\mathcal{H}_{\hbar,V}}^{s})}\lesssim 	\|g_{\varepsilon}\|^{2}_{L^{2}([0,T];\mathrm{H}_{\mathcal{H}_{\hbar,V}}^{s})},
\end{equation}
with the constant independent of $\hbar$ and $t\in[0,T]$.
Since
$g_{\varepsilon} \rightarrow 0$ in $L^{2}([0, T] ; \mathrm{H}_{\mathcal{H}_{\hbar,V}}^s)$, therefore we have
\begin{equation}
w_{\varepsilon} \to 0 \text{ in }L^{2}([0, T]; \mathrm{H}_{\mathcal{H}_{\hbar,V}}^{1+s}),\quad \varepsilon\to 0,
\end{equation}
 i.e.,
\begin{equation}
	u_{\varepsilon} \to \tilde{u} \text{ in }L^{2}([0, T]; \mathrm{H}_{\mathcal{H}_{\hbar,V}}^{1+s}) ,\quad \varepsilon\to 0.
\end{equation} 
Furthermore, the limit is the same for every representation of $u$, since they will differ from $\left(u_{\varepsilon}\right)_{\varepsilon}$ by a $L^{2}([0, T]; \mathrm{H}_{\mathcal{H}_{\hbar,V}}^{1+s})$-negligible net. This concludes the proof.
\end{proof}
\section{Semiclassical limit as $\hbar \to 0$}\label{sclass}
In this section we will consider the semiclassical limit of solutions as $\hbar\to 0$. 
\begin{proof}[Proof of Theorem \ref{semlimit}]
	Consider two Cauchy problems:
	\begin{equation}\label{CP1}
		\left\{\begin{array}{l}
			\partial_{t}^{2} u(t, k)+a(t)\mathcal{H}_{\hbar,V}u(t, k)+q(t) u(t, k)=f(t,k), \quad  (t,k) \in(0,T]\times\hbar\mathbb{Z}^{n}, \\
			u(0, k)=u_{0}(k), \quad k\in\hbar\mathbb{Z}^{n},\\
			\partial_{t} u(0, k)=u_{1}(k),\quad k\in\hbar\mathbb{Z}^{n},
		\end{array}\right.
	\end{equation}
	and
	\begin{equation}\label{CP2}
		\left\{
		\begin{array}{ll}
			\partial^{2}_{t}v(t,x)+a(t)\mathcal{H}_{V}v(t,x)+q(t)v(t,x)=f(t,x), \quad (t,x) \in(0,T]\times\mathbb{R}^{n},\\
			v(0,x)=u_{0}(x),\quad x\in\mathbb{R}^{n},\\
			\partial_{t}v(0,x)=u_1(x),\quad x\in\mathbb{R}^{n},
		\end{array}
		\right.
	\end{equation}
	where $\mathcal{H}_{V}$ is the usual Schr\"{o}dinger operator on $\mathbb{R}^{n}$. The potential in the discrete Schr\"{o}dinger operator is the restriction of the potential in the usual Schr\"{o}dinger operator to $\hbar\mathbb{Z}^{n}$. Here the initial data and the source term of the Cauchy problem \eqref{CP1} is the evaluation of the initial data and the source term from \eqref{CP2} on the lattice $\hbar\mathbb{Z}^{n}$. From the equations \eqref{CP1} and \eqref{CP2}, 
	denoting $w:=u-v$,  we get
	\begin{equation}\label{CPF}
		\left\{
		\begin{array}{ll}
			\partial^{2}_{t}w(t,k)+a(t)\mathcal{H}_{\hbar,V}w(t,k)+q(t) w(t,k)=a(t)\left(\mathcal{H}_{V}-\mathcal{H}_{\hbar,V}\right)v(t,k),~~  k\in\hbar\mathbb{Z}^{n},\\
			w(0,k)=0,\quad k\in\hbar\mathbb{Z}^{n},\\
			\partial_{t}w(0,k)=0,\quad k\in\hbar\mathbb{Z}^{n}.               
		\end{array}		\right.
	\end{equation}
	Since $w_{0}=w_{1}=0$, applying Theorem \ref{class} for the above Cauchy problem and using estimate \eqref{uestt}, we get   
	\begin{eqnarray}\label{cgt1}
		\|w(t,\cdot)\|^{2}_{\mathrm{H}_{\mathcal{H}_{\hbar,V}}^{1+s}}+\|w_{t}(t,\cdot)\|^{2}_{\mathrm{H}_{\mathcal{H}_{\hbar,V}}^{s}}
		&\leq& C_{T}\|a\left(\mathcal{H}_{V}-\mathcal{H}_{\hbar,V}\right)v\|^{2}_{L^{2}([0,T];\mathrm{H}_{\mathcal{H}_{\hbar,V}}^{s})}\nonumber\\
		&\leq& C_{T}\|a\|^{2}_{L^{\infty}([0,T])}\|\left(\mathcal{H}_{V}-\mathcal{H}_{\hbar,V}\right)v\|^{2}_{L^{2}([0,T];\mathrm{H}_{\mathcal{H}_{\hbar,V}}^{s})},\nonumber\\
	\end{eqnarray}
where the constant $C_{T}$ is given by
\begin{equation}
	C_{T}=c_{0}^{-1}(1+\left\|a\right\|_{L^{\infty}})e^{c_{0}^{-1}\left(1+\|a^{\prime}\|_{L^{\infty}}+\|q\|_{L^{\infty}}+2\left\|a\right\|_{L^{\infty}}\right)T},
\end{equation}
with $c_{0}=\min\{a_{0},1\}$.
	
	Now we will estimate the term $\left\|\left(\mathcal{H}_{V}-\mathcal{H}_{\hbar,V}\right)v\right\|^{2}_{L^{2}([0,T];\mathrm{H}_{\mathcal{H}_{\hbar,V}}^{s})}$. 
	Let $\phi\in C^{4}(\mathbb{R}^{n})$, then by Taylor's theorem with the Lagrange's form of the remainder, we have
	\begin{equation}\label{tylr}
		\phi(\xi+\mathbf{h})=\sum_{|\alpha|\leq 3} \frac{\partial^{\alpha} \phi(\xi)}{\alpha !} \mathbf{h}^{\alpha}+\sum_{|\alpha|=4} \frac{\partial^{\alpha} \phi(\xi+\theta_{\xi}\mathbf{h})}{\alpha !} \mathbf{h}^{\alpha},
	\end{equation} 
	for some $\theta_{\xi}\in (0,1)$ depending on $\xi$.  Let $v_j$ be the $j^{th}$ basis vector in $\Zn$, having all zeros except for $1$ as the $j^{th}$ component and then by taking  $\mathbf{h}=v_{j}$ and $-v_{j}$ in \eqref{tylr}, we have
	\begin{eqnarray}\label{d1}
		\phi(\xi+v_{j})=\phi(\xi)+\phi^{(v_{j})}(\xi)+\frac{1}{2!}\phi^{(2v_{j})}(\xi)+\frac{1}{3!}\phi^{(3v_{j})}(\xi)+\frac{1}{4!}\phi^{(4v_{j})}(\xi+\theta_{j,\xi} v_{j}),
	\end{eqnarray} and
	\begin{eqnarray}\label{d2}
		\phi(\xi-v_{j})=\phi(\xi)-\phi^{(v_{j})}(\xi)+\frac{1}{2!}\phi^{(2v_{j})}(\xi)-\frac{1}{3!}\phi^{(3v_{j})}(\xi)+\frac{1}{4!}\phi^{(4v_{j})}(\xi-\tilde{\theta}_{j,\xi} v_{j}),\end{eqnarray}
	for some $\theta_{j,\xi},\tilde{\theta}_{j,\xi}\in(0,1)$. Using \eqref{d1} and \eqref{d2}, we have
	\begin{equation*}
		\phi(\xi+v_{j})+\phi(\xi-v_{j})-2\phi(\xi)=\phi^{(2v_{j})}(\xi)+\frac{1}{4!}\left(\phi^{(4v_{j})}(\xi+\theta_{j,\xi} v_{j})+\phi^{(4v_{j})}(\xi-\tilde{\theta}_{j,\xi} v_{j})\right).
	\end{equation*}
	Since $\delta_{\xi_{j}}^{2}\phi(\xi)=\phi(\xi+v_{j})+\phi(\xi-v_{j})-2\phi(\xi)$, where $\delta_{\xi_{j}}\phi(\xi):=\phi(\xi+\frac{1}{2}v_{j})-\phi(\xi-\frac{1}{2}v_{j}),$ is the usual central difference operator, it follows that
	\begin{eqnarray}\label{d3}
		\delta_{\xi_{j}}^{2}\phi(\xi)&=&\phi^{(2v_{j})}(\xi)+\frac{1}{4!}\left(\phi^{(4v_{j})}(\xi+\theta_{j,\xi} v_{j})+\phi^{(4v_{j})}(\xi-\tilde{\theta}_{j,\xi}v_{j})\right).
	\end{eqnarray}
	Now by adding all the above $n$-equations for $j=1,\dots,n$, we get
	\begin{eqnarray}\label{d3}
		\sum\limits_{j=1}^{n}\delta_{\xi_{j}}^{2}\phi(\xi)=\sum\limits_{j=1}^{n}\phi^{(2v_{j})}(\xi)+\frac{1}{4!}\sum\limits_{j=1}^{n}\left(\phi^{(4v_{j})}(\xi+\theta_{j,\xi} v_{j})+\phi^{(4v_{j})}(\xi-\tilde{\theta}_{j,\xi} v_{j})\right).
	\end{eqnarray}
	Let us define a translation operator $E_{\theta_{j}v_{j}}\phi:\mathbb{R}^{n}\to \mathbb{R}$  by $E_{\theta_{j}v_{j}}\phi(\xi):=\phi(\xi-\theta_{j,\xi}v_{j}),$ then we get
	\begin{eqnarray}
		\sum\limits_{j=1}^{n}\delta_{\xi_{j}}^{2}\phi(\xi)-\sum\limits_{j=1}^{n}\frac{\partial^{2}}{\partial\xi_{j}^{2}}\phi(\xi)=\frac{1}{4!}\sum\limits_{j=1}^{n}\left(E_{-\theta_{j}v_{j}}\phi^{(4v_{j})}(\xi)+E_{\tilde{\theta}_{j}v_{j}}\phi^{(4v_{j})}(\xi)\right).
	\end{eqnarray}
	Now we extend this to $\hbar\Zn$. Consider a function  $\phi_{\hbar}:\mathbb{R}^{n}\to \mathbb{R}$  defined by
	$\phi_{\hbar}(\xi):=\phi(\hbar\xi)$. Clearly  $\phi_{\hbar}\in C^{4}(\mathbb{R}^{n})$, if we take $\phi\in C^{4}(\mathbb{R}^{n})$. Now we have
	\begin{eqnarray}\label{eqhzn}
		\mathcal{L}_{1}\phi_{\hbar}(\xi)-\mathcal{L}\phi_{\hbar}(\xi)=\frac{1}{4!}\sum\limits_{j=1}^{n}\left(E_{-\theta_{j}v_{j}}\phi_{\hbar}^{(4v_{j})}(\xi)+E_{\tilde{\theta}_{j}v_{j}}\phi_{\hbar}^{(4v_{j})}(\xi)\right),
	\end{eqnarray}
	where $\mathcal{L}$ is the Laplacian on $\Rn$ and $\mathcal{L}_{1}$ is the discrete difference Laplacian on $\Zn$.  One can quickly notice that 
	\begin{equation}
		E_{-\theta_{j}v_{j}}\phi_{\hbar}^{(4v_{j})}(\xi)=\phi_{\hbar}^{(4v_{j})}(\xi+\theta_{j,\xi} v_{j})=\hbar^{4}\phi^{(4v_{j})}(\hbar\xi+\hbar\theta_{j,\xi} v_{j})=\hbar^{4}E_{-\hbar\theta_{j}v_{j}}\phi^{(4 v_{j})}(\hbar\xi).
	\end{equation}
	Therefore, the equality \eqref{eqhzn} becomes
	\begin{eqnarray}\label{lapdif}
		\left(\mathcal{L}_{\hbar}-\hbar^{2}\mathcal{L}\right)\phi(\hbar\xi)=\frac{\hbar^{4}}{4!}\sum\limits_{j=1}^{n}\left(E_{-\hbar\theta_{j}v_{j}}\phi^{(4v_{j})}(\hbar\xi)+E_{\hbar\tilde{\theta}_{j}v_{j}}\phi^{(4v_{j})}(\hbar\xi)\right).
	\end{eqnarray}
	Combining \eqref{dhamil}, \eqref{eucd} and \eqref{lapdif}, we get 
	\begin{multline}
		\left(\mathcal{H}_{V}-\mathcal{H}_{\hbar,V}\right)\phi(\hbar\xi)=  	\left(\hbar^{-2}\mathcal{L}_{\hbar}-\mathcal{L}\right)\phi(\hbar\xi)=\frac{\hbar^{2}}{4!}\sum\limits_{j=1}^{n}\left(E_{-\hbar\theta_{j}v_{j}}\phi^{(4v_{j})}(\hbar\xi)\right.+\\
		\left.E_{\hbar\tilde{\theta}_{j}v_{j}}\phi^{(4v_{j})}(\hbar\xi)\right).
	\end{multline}
	Hence, it follows that
	\begin{equation}\label{EQ:convh}
		\left\|\left(\mathcal{H}_{V}-\mathcal{H}_{\hbar,V}\right)\phi\right\|^{2}_{\mathrm{H}_{\mathcal{H}_{\hbar,V}}^{s}}\lesssim\hbar^{4}\max_{1\leq j\leq n}\left(\left\|E_{-\hbar\theta_{j}v_{j}}\phi^{(4v_{j})}\right\|^{2}_{\mathrm{H}_{\mathcal{H}_{\hbar,V}}^{s}}
		+\left\|E_{\hbar\tilde{\theta}_{j}v_{j}}\phi^{(4v_{j})}\right\|^{2}_{\mathrm{H}_{\mathcal{H}_{\hbar,V}}^{s}}\right).
	\end{equation}
Combining the relation \eqref{semb} with the Sobolev embedding theorem (see e.g.\cite[Excercise 2.6.17]{Ruzhansky-Turunen:BOOK}), we have
\begin{equation}\label{sobemb}
s>k+\frac{n}{2} \Longrightarrow 	\mathrm{H}_{\mathcal{H}_{V}}^{s}(\mathbb{R}^{n}) \subseteq C^k\left(\mathbb{R}^n\right).
\end{equation}
Since $(u_{0}, u_{1}) \in \mathrm{H}_{\mathcal{H}_{V}}^{1+s}\times \mathrm{H}_{\mathcal{H}_{V}}^{s}$ with $s>4+\frac{n}{2}$, therefore  using Theorem \ref{eucclass}, it follows that the classical solution satisfies $v\in C([0,T];\mathrm{H}_{\mathcal{H}_{V}}^{1+s})\cap C^{1}([0,T];\mathrm{H}_{\mathcal{H}_{V}}^{s})$ with $s>4+\frac{n}{2}$. Using the embedding \eqref{sobemb}, we get $v\in C^{4}(\mathbb{R}^{n})$ and using the hypothesis $(u_{0}^{(4v_{j})},u_{1}^{(4v_{j})})\in \mathrm{H}_{\mathcal{H}_{V}}^{1+s}\times \mathrm{H}_{\mathcal{H}_{V}}^{s}$ for all $j=1,\dots,n$, we deduce that
\begin{equation}\label{v4der}
	v^{(4v_{j})}(t,\cdot)\in \mathrm{H}_{\mathcal{H}_{V}}^{s}(\Rn), \quad \text{for all }t\in[0,T].
\end{equation} 
	Now from  \eqref{EQ:convh}, it follows that
	\begin{multline}\label{EQ:cht}
		\left\|\left(\mathcal{H}_{V}-\mathcal{H}_{\hbar,V}\right)v\right\|^{2}_{_{L^{2}([0,T];\mathrm{H}_{\mathcal{H}_{\hbar,V}}^{s})}}\lesssim\hbar^{4}\max_{1\leq j\leq n}\left(\left\|E_{-\hbar\theta_{j}v_{j}}v^{(4v_{j})}\right\|^{2}_{_{L^{2}([0,T];\mathrm{H}_{\mathcal{H}_{\hbar,V}}^{s})}}+\right.\\
		\left.\left\|E_{\hbar\tilde{\theta}_{j}v_{j}}v^{(4v_{j})}\right\|^{2}_{_{L^{2}([0,T];\mathrm{H}_{\mathcal{H}_{\hbar,V}}^{s})}}\right).
	\end{multline}
	Using \eqref{cgt1}, \eqref{v4der} and \eqref{EQ:cht}, we get $\|w(t,\cdot)\|^{2}_{\mathrm{H}_{\mathcal{H}_{\hbar,V}}^{1+s}}+\|w_{t}(t,\cdot)\|^{2}_{\mathrm{H}_{\mathcal{H}_{\hbar,V}}^{s}}\to 0$ as $\hbar\to 0.$ Hence $\|w(t,\cdot)\|_{\mathrm{H}_{\mathcal{H}_{\hbar,V}}^{1+s}} \to 0$ and $\|w_{t}(t,\cdot)\|_{\mathrm{H}_{\mathcal{H}_{\hbar,V}}^{s}} \to 0$ as $\hbar \to 0$. This concludes the proof of Theorem \ref{semlimit}.	
\end{proof}
We can prove Theorem \ref{vvyksemlimit} without making any substantial modifications to the proof of Theorem \ref{semlimit}.\\

\textbf{Acknowledgment}\\
\medskip
The authors would like to thanks Prof. M. Krishna for his  insightful comments.
\bibliographystyle{alphaabbr}
\bibliography{Discrete_Schrodinger}

\end{document}